\documentclass[runningheads]{llncs}
\usepackage{graphicx}
% Used for displaying a sample figure. If possible, figure files should
% be included in EPS format.
%

\usepackage{hyperref}
\usepackage[utf8]{inputenc}
\usepackage[english]{babel}
\usepackage{amssymb,amsmath}
\usepackage{etoolbox}
\usepackage{enumitem}
\usepackage{graphicx}
\usepackage{tikz}
\usepackage{dsfont}
\usepackage{soul}
\usepackage{mathtools}

%%%%%%%%%%%%%%%%%%%%%%%%%
%\newcommand{\cref}[1]{ [{#1}]}
%\newcommand{\qed}{\hfill \mbox{\raggedright \rule{.07in}{.1in}}}
%\newcommand{\subs}[1]{\smallskip\noindent\textbf{#1.} }
%\newcommand{\tm}[1]{[#1] }
%\newcommand{\tmstrong}[1]{\textbf{#1}}

\newcommand{\bc}{\begin{center}}

\newcommand{\bea}{\begin{align}}
\newcommand{\beq}{\begin{equation}}
\newcommand{\beqa}{\begin{eqnarray}}
\newcommand{\bi}{\begin{itemize}}
\newcommand{\bitem}{\begin{itemize}}
\newcommand{\bpm}{\begin{pmatrix}}      
\newcommand{\BV}{\tmop{BV}}

\newcommand{\cref}[1]{ {\tiny[{#1}]}}

\newcommand{\ddiv}{\tmop{div}}
\newcommand{\supp}{\tmop{supp}}

\newcommand{\ec}{\end{center}}
\newcommand{\eea}{\end{align}}
\newcommand{\eeq}{\end{equation}}
\newcommand{\eeqa}{\end{eqnarray}}
\newcommand{\ei}{\end{itemize}}
\newcommand{\eitem}{\end{itemize}}

\newcommand{\epm}{\end{pmatrix}}

\newcommand{\N}{\mathbb{N}}

\newcommand{\R}{\mathbb{R}}

%\newcommand{\subs}[1]{\medskip\noindent\textbf{#1.} }
%\newcommand{\subs}[1]{\noindent\textbf{#1.} }
%\newcommand{\subs}[1]{\paragraph{#1.} }
%\newcommand{\subs}[1]{\smallskip\noindent\textbf{#1.} }

 % fixes algorithm referencing bug
\newcommand{\tm}[1]{}

\newcommand{\tmop}[1]{\ensuremath{\operatorname{#1}}}

\newcommand{\TV}{\tmop{TV}}

\newcommand{\bpf}{\begin{proof}}
\newcommand{\epf}{\end{proof}}

\newcommand{\be}{\begin{enumerate}}
\newcommand{\ee}{\end{enumerate}}

\usepackage{soul}

%\newcommand{\todo}[1]{\colorbox{yellow}{#1}}

% UzL colors
\definecolor{oceangreen}{RGB}{0,57,74}
\definecolor{oceangreen-90}{RGB}{0,73,90}
\definecolor{oceangreen-80}{RGB}{0,90,107}
\definecolor{oceangreen-70}{RGB}{17,107,123}
\definecolor{oceangreen-60}{RGB}{60,126,142}
\definecolor{oceangreen-50}{RGB}{93,146,160}
\definecolor{oceangreen-40}{RGB}{126,168,178}
\definecolor{oceangreen-30}{RGB}{157,188,198}
\definecolor{oceangreen-20}{RGB}{189,210,216}
\definecolor{oceangreen-10}{RGB}{222,233,236}
\definecolor{red-lightest}{RGB}{233,191,173}
\definecolor{red-light}{RGB}{227,32,50}
\definecolor{red-medium}{RGB}{182,22,32}
\definecolor{red-dark}{RGB}{126,20,24}
\definecolor{orange-lightest}{RGB}{240,205,178}
\definecolor{orange-light}{RGB}{236,117,4}
\definecolor{orange-medium}{RGB}{202,81,25}
\definecolor{orange-dark}{RGB}{129,53,18}
\definecolor{yellow-lightest}{RGB}{236,217,186}
\definecolor{yellow-light}{RGB}{251,186,0}
\definecolor{yellow-medium}{RGB}{191,134,21}
\definecolor{yellow-dark}{RGB}{117,83,17}
\definecolor{green-lightest}{RGB}{219,215,187}
\definecolor{green-light}{RGB}{149,187,12}
\definecolor{green-medium}{RGB}{126,133,37}
\definecolor{green-dark}{RGB}{50,89,74}
\definecolor{turquoise-lightest}{RGB}{205,219,216}
\definecolor{turquoise-light}{RGB}{59,178,160}
\definecolor{turquoise-medium}{RGB}{36,143,133}
\definecolor{turquoise-dark}{RGB}{0,90,91}
\definecolor{ocean-lightest}{RGB}{198,220,226}
\definecolor{ocean-light}{RGB}{0,174,198}
\definecolor{ocean-medium}{RGB}{0,145,167}
\definecolor{ocean-dark}{RGB}{0,97,122}
\definecolor{cyan-lightest}{RGB}{195,217,229}
\definecolor{cyan-light}{RGB}{60,169,213}
\definecolor{cyan-medium}{RGB}{0,131,173}
\definecolor{cyan-dark}{RGB}{0,87,119}
\definecolor{blue-lightest}{RGB}{195,217,236}
\definecolor{blue-light}{RGB}{111,165,206}
\definecolor{blue-medium}{RGB}{0,105,163}
\definecolor{blue-dark}{RGB}{0,70,114}
\definecolor{gray-cold-lightest}{RGB}{215,223,228}
\definecolor{gray-cold-light}{RGB}{191,203,213}
\definecolor{gray-cold-medium}{RGB}{140,157,171}
\definecolor{gray-cold-dark}{RGB}{87,105,120}
\definecolor{gray-neutral-lightest}{RGB}{226,228,228}
\definecolor{gray-neutral-light}{RGB}{208,208,210}
\definecolor{gray-neutral-medium}{RGB}{156,157,160}
\definecolor{gray-neutral-dark}{RGB}{100,100,102}
\definecolor{gray-warm-lightest}{RGB}{237,236,227}
\definecolor{gray-warm-light}{RGB}{218,214,203}
\definecolor{gray-warm-medium}{RGB}{162,159,145}
\definecolor{gray-warm-dark}{RGB}{118,115,105}

\usepackage{pgfplots}
\pgfplotsset{compat=1.13}
\usepgfplotslibrary{fillbetween}
%\usepgfplotslibrary{patchplots}
%\pgfkeys{/pgf/declare function={arcsinh(\x) = ln(\x + sqrt(\x^2+1));}}
%\pgfkeys{/pgf/declare function={arccosh(\x) = ln(\x + sqrt(\x^2-1));}}
\pgfkeys{/pgf/declare function={conv(\a,\b,\s) = (1 - \s)*\a + \s*\b;}}

\def\nsamplesd{50}

\def\xlima{0.1}
\def\xlimb{1.2}
\def\zlima{0.1}
\def\zlimb{0.9}
\pgfkeys{/pgf/declare function={zcoord(\s) = conv(\zlima,\zlimb,\s);}}
\pgfkeys{/pgf/declare function={xcoord(\s) = conv(\xlima,\xlimb,\s);}}

\def\fxmax{-0.25}
\def\fxmin{0.25}
\pgfkeys{/pgf/declare function={f0(\x) =   \x^3 - 3*(\fxmax + \fxmin)/2*\x^2 + 3*\fxmax*\fxmin*\x + 0.2;}}
\pgfkeys{/pgf/declare function={f1(\x) = 3*\x^2 - 3*(\fxmax + \fxmin)  *\x   + 3*\fxmax*\fxmin;}}
\pgfkeys{/pgf/declare function={f(\x) = 2*f0(\x - 0.6);}}
\pgfkeys{/pgf/declare function={ff(\x) = 2*f1(\x - 0.6);}}

\pgfkeys{/pgf/declare function={rho0(\x) = 0.7*exp(-0.5*\x^2);}}
\pgfkeys{/pgf/declare function={rho2(\t) = conv(10,12,\t));}}
\pgfkeys{/pgf/declare function={rho3(\t) = f(xcoord(\t)));}}
\pgfkeys{/pgf/declare function={rho(\t,\z) = rho2(\t)*rho0(rho2(\t)*(\z - rho3(\t)));}}

\usetikzlibrary{positioning}
\usetikzlibrary{calc}
\usetikzlibrary{spy}

\newlength\picwidth
\newlength\spywidth
\newlength\crvogt

% If you use the hyperref package, please uncomment the following line
% to display URLs in blue roman font according to Springer's eBook style:

\renewcommand{\d}{\,\mathrm{d}}

% share counter between theorems, definitions etc.
\newtheorem{thm}{Theorem}
\newtheorem{defn}[thm]{Definition}
\newtheorem{prop}[thm]{Proposition}
\newtheorem{lem}[thm]{Lemma}
\newtheorem{cor}[thm]{Corollary}

\begin{document}

\title{On the Connection between Dynamical Optimal Transport and Functional Lifting} %
\titlerunning{Connections between Dynamical Optimal Transport and Functional Lifting}
% If the paper title is too long for the running head, you can set
% an abbreviated paper title here
%
\author{Thomas Vogt\inst{1} \and
Roland Haase\inst{2} \and
Danielle Bednarski\inst{2} \and
Jan Lellmann\inst{2}}
\authorrunning{T.~Vogt et al.}
% First names are abbreviated in the running head.
% If there are more than two authors, 'et al.' is used.
%
\institute{Potsdam Institute for Climate Impact Research, 14473 Potsdam, Germany\\ \email{thomas.vogt@pik-potsdam.de}\and
University of Lübeck, 23562 L\"ubeck, Germany\\
\email{\{haase,bednarski,lellmann\}@mic.uni-luebeck.de}}
\maketitle              % typeset the header of the contribution
\begin{abstract}
Functional lifting methods provide a tool for approximating solutions of difficult non-convex problems by embedding them into a larger space. In this work, we investigate a mathematically rigorous formulation based on  embedding into the space of pointwise probability measures over a fixed range~$\Gamma$.
Interestingly, this approach can be derived as a generalization of the theory of dynamical optimal transport.
Imposing the established continuity equation as a constraint corresponds to variational models with first-order regularization. By modifying the continuity equation, the approach can also be extended to models with higher-order regularization.

\keywords{Dynamical Optimal Transport \and Functional Lifting \and Convex Relaxation \and Second-order Regularization.}
\end{abstract}

% reset footnote counter
\setcounter{footnote}{0}

% \jl{I'd prefer ``...connection between dynamical optimal transport and...'' for the title}

\section{Motivation and Introduction}
\label{sec:intro}
Over the last two decades, functional lifting techniques have been established as a powerful and versatile tool for solving variational problems in image processing.
While originally concerned \cite{alberti2003,pock2008,pock2010} with convex relaxations of functionals of the form
\begin{equation}
        F(u) := \int_{\Omega} \rho(x, u(x)) + \eta(\nabla u(x)) \d x,
        \label{eq:calibration_functional}
\end{equation}
in which $\Omega \subset \R^d$ is open and bounded, $\Gamma \subset \R$ is compact, $\rho: \Omega \times \Gamma \to \R$ is a pointwise data term and $\eta: \R^d \to \R$ is a convex regularizer, numerous extensions of the concept have been proposed in recent years.
Among these are lifting approaches for vector-valued \cite{goldluecke2013,strekalovskiy2014} and manifold-valued problems \cite{lellmann2013b,vogt2019b}, i.e., for $\Gamma \subset \R^s$ and $\Gamma = \mathfrak{M}$ for manifolds $\mathfrak{M} \subset \R^s$, as well as functionals with higher-order regularization that involve, e.g., the Laplacian \cite{loewenhauser2018,vogt2019a} or the total generalized variation \cite{ranftl2013,strecke2019} of $u$.
Further extensions include a specially tailored discretization technique known as \textit{sublabel-accurate} liftings \cite{mollenhoff2016,laude2016,mollenhoff2017} and a generalization to polyconvex regularizers of first order \cite{mollenhoff2019}.

Although these generalizations achieve promising results in practice, many of them lack a theoretically sound continuous formulation or, respectively, one that establishes a connection to the original \textit{calibration method} for~\eqref{eq:calibration_functional} from \cite{pock2010}.
In some cases this is due to an early discretization of the range $\Gamma$ in the derivation of the model, e.g., in \cite{laude2016,loewenhauser2018}, whereas others rely on heuristic descriptions of the continuous case without a proper discussion of the correct function spaces such as \cite{goldluecke2013,vogt2019a}.

In this work, we strive for a fully continuous lifting model that encompasses a large number of the aforementioned approaches, while at the same time providing the desired theoretical soundness.
To be exact, we consider functionals of the form
\begin{equation}
        F(u) := \int_{\Omega} f(x, u(x), Lu(x)) \d x,
        \label{eq:target_functional}
\end{equation}
where $\Omega \subset \R^d$ is open and bounded, $\Gamma \subset \R^s$ is a compact \textit{vectorial} range and $L \in \{ \nabla, \nabla^2, \Delta \}$ is one of the listed linear differential operators.
The integrand $f: \Omega \times \Gamma \times \R^m \to \R$ is expected to be \textit{convex in the third argument} and bounded from below.
It is easy to see how \eqref{eq:target_functional} generalizes the calibration method functional~\eqref{eq:calibration_functional}.

We propose a convex relaxation strategy for \eqref{eq:target_functional} based on the use of measures over $\Omega \times \Gamma$ as lifted variables.
More precisely, we define a lifted convex functional~$\mathcal{F}$ on the space $\mathcal{M}(\Omega \times \Gamma)$ of Radon measures such that
$\mathcal{F}(\delta_u)$ provably agrees with $F(u)$ for all sufficiently regular functions $u: \Omega \to \Gamma$ (as required by the differential operator~$L$).
The notation $\delta_u$ refers to a measure concentrated on the graph of~$u$.

The foundation for our definition of $\mathcal{F}$ is borrowed from the theory of \textit{dynamical optimal transport} as we introduce a highly generalized version of the \textit{Benamou-Brenier functional} from \cite{santambrogio2015} in our derivations.
Together with the results from \cite{perkkio2018}, we prove our main theoretical result, namely an integral representation of the implicitly defined Benamou-Brenier functional.
Put in broader terms, the original notion of dynamical optimal transport of assigning a kinetic-energy cost to a time-dependent family of mass distributions (cf. \cite{benamou2000,brenier2003}) is generalized to the application of a largely arbitrary cost-rule to a family of mass distributions which may be indexed by multiple variables\footnote{In our case, these variables are the coordinates of $\Omega$.}.

% \jl{Add most important formulas about Dynamical OT}

On the application side, we obtain a fully convex model by relaxing the nonconvex domain of $\mathcal{F}$, i.e., the set of all graph-concentrated measures $\delta_u$ for sufficiently regular $u$, to ${\{\mu : \Omega \to \mathcal{P}(\Gamma) \}}$, the convex set of all functions that assign probability measures over $\Gamma$ to points in $\Omega$.

\subsection{Related Work}
\label{subsec:related_work}

\subsubsection{Functional Lifting}
A pioneering approach for convexifying scalar-valued problems of the type \eqref{eq:calibration_functional} from \cite{pock2008,pock2010} is based on subgraph-representations of functions~${u \in W^{1,1}(\Omega)}$.
It starts by associating $u$ with a ``lifted'' representation through its subgraph $\mathbf{1}_u : \Omega \times \Gamma \to \{0, 1\}$ defined by
\begin{equation}
        \mathbf{1}_u(x, z) := \begin{cases}
                1, & u(x) > z,\\
                0, & \text{otherwise}.
        \end{cases}
        \label{eq:subgraph_fctn}
\end{equation}
Then a lifted convex functional $\mathcal{F}$ is constructed,\begin{equation}
        \mathcal{F}(v) := \sup_{\phi \in \mathcal{K}} \int_{\Omega \times \Gamma} \langle \phi , Dv \rangle ,
        \label{eq:calibration_lifting}
\end{equation}
with $Dv$ being the distributional derivative of $v \in \BV(\Omega \times \Gamma)$ and $\mathcal{K}$ the set of admissible dual vector fields
\begin{multline}
        \mathcal{K} := \{ (\phi^x, \phi^z) \in C_0(\Omega \times \Gamma, \R^d \times \R) :\\ \eta^*(\phi^x(x, z)) - \rho(x, z) \leq \phi^z(x, z) \ \forall (x, z) \in \Omega \times \Gamma \}.
        \label{eq:calibration_dual_set}
\end{multline}
With these definitions, one can show the equivalence $F(u) = \mathcal{F}(\mathbf{1}_u)$ \cite[Thm. 3.2]{pock2010}. Thus the problem of minimizing $F$ over $W^{1,1}(\Omega)$ can equivalently be formulated as minimizing $\mathcal{F}$ over the (nonconvex) set ${\{ \mathbf{1}_u : u \in W^{1,1}(\Omega) \}}$.

A striking -- and exclusive to the scalar-valued case -- result is the following:
When the domain of $\mathcal{F}$ is extended from the nonconvex set ${\{ \mathbf{1}_u : u \in W^{1,1}(\Omega) \}}$ to the \emph{convex} relaxation
\begin{equation}
        \mathcal{C} := \left\{ v \in \BV(\Omega \times \Gamma, [0, 1]) : v(x, \min(\Gamma)) = 1, \, v(x, \max(\Gamma)) = 0 \right \},
        \label{eq:calibration_admissible_set}
\end{equation}
one can obtain minimizers of $\inf_{u \in W^{1,1}(\Omega)} \mathcal{F}(\mathbf{1}_u)$, which is a nonconvex problem, from minimizers of the convex problem $\inf_{v \in \mathcal{C}} \mathcal{F}(v)$ by a simple thresholding operation \cite[Thm. 3.1]{pock2010}.
Combined with the equivalence between~$F$ and~$\mathcal{F}$, this allows for a convex solution strategy to the original problem \eqref{eq:calibration_functional}.
This construction can be seen as a function-space equivalent of the graph construction by Ishikawa for discretized domains~\cite{ishikawa2003}.
%\jl{This construction can be seen as a function-space equivalent of the graph construction by Ishikawa for discretized domains}.

%\begin{figure}[t]
%\centering
%\input{./fig/tikz_subgraph_lifting.tex}
%\caption{
%        Illustration of the well-known scalar-valued lifting method from \cite{pock2008,pock2010}: A function $u \in W^{1,1}(\Omega, \Gamma)$ is equivalently represented as a subgraph-function $\mathbf{1}_u$ on the product space $\Omega \times \Gamma$, that takes the value~$1$ for $u(x) > z$ and $0$, otherwise.
%        This idea allows one to define a convex relaxation of the (generally) nonconvex target functional~\eqref{eq:calibration_functional} by $\sup_{\phi \in \mathcal{K}} \int_{\Omega \times \Gamma} \langle \phi, Dv \rangle$ as in \eqref{eq:calibration_lifting}.
%        The geometric intuition behind this construction is that the distributional derivative $D v$ has, in case of $v = \mathbf{1}_u$, the inner surface normal $\nu_{\text{gph}(u)}$ for a density in its polar decomposition and is concentrated on~$\text{gph}(u)$.
%        As we shall show in section~\ref{subsec:connections}, this lifting procedure can also be understood in terms of the generalized ``Benamou-Brenier functional'' that we introduce in section~\ref{subsec:benamou_brenier_functional} and that is deeply connected to theory of dynamical optimal transport.
%}
%\label{fig:subgraph_lifting}
%\end{figure}

\begin{figure}[t]
	\input{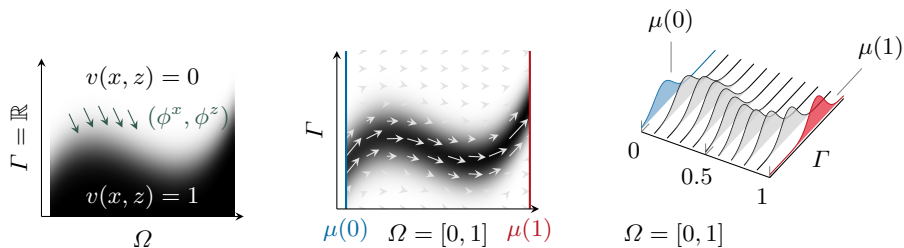}
	\caption{
		The theory of dynamical optimal transport exhibits conceptual
		similarities with state-of-the-art lifting strategies.
		The calibration method-based lifting (\textbf{left}) is defined on the subspace of
		functions $\BV(\Omega \times \R)$ that is defined in \eqref{eq:calibration_admissible_set}.
		A lifted functional is defined on the
		Cartesian product $\Omega \times \R$ via dual pairings $\langle\phi,Dv\rangle$
		with suitable vector fields $\phi = (\phi^x,\phi^z)$ as in \eqref{eq:calibration_dual_set}.
		In the Benamou-Brenier approach to dynamical optimal transport (\textbf{right}), the
		regularity of curves in the space of probability measures is measured
		by defining tangential vector fields on the Cartesian product
		$[0,1] \times \Gamma$.
		The derivative measures $Dv$ from the calibration-based model can be
		interpreted to correspond to the tangential vector fields in dynamical
		optimal transport (\textbf{center}).
		We will elaborate on this connection in Section~\ref{subsec:connections}.
	}
\label{fig:ot:calib-dyn}
\end{figure}

A generalization of the subgraph-based lifting strategy \eqref{eq:subgraph_fctn} to vectorial problems using the subgraphs of the separate channels of $u \in W^{1,1}(\Omega, \R^s)$ was investigated in \cite{strekalovskiy2014}, but remains limited to specific choices for the data term and the regularizer.

The majority of vectorial approaches resorts to alternative lifting approaches for vector-valued functions $u$.
Most commonly used are representations as pointwise Dirac measures \cite{lellmann2013b,loewenhauser2018,vogt2019b,vogt2019a} or, equivalently, Dirac distributions \cite{goldluecke2013} on $\Omega$ such that $\delta_{u(x)}$ concentrates a unit mass at $u(x) \in \Gamma$ for every $x \in \Omega$.
Convexity is then obtained through a relaxation to the set of pointwise probability measures.
Until now, however, this approach lacked a connection to the scalar-valued subgraph-lifting~\eqref{eq:calibration_lifting} and its theoretical justification through the calibration method~\cite{alberti2003} and could only be regarded as a heuristic.
Only recently has a fully continuous vectorial lifting strategy been proposed by \cite{mollenhoff2019} in which cartesian currents serve as lifted functions and thereby establish a link to the minimal surface problem that is at the heart of the calibration criterion.

Another avenue of generalizing the calibration method for \eqref{eq:calibration_functional} was taken in attempts to allow regularization of higher order.
Specifically, Laplacian regularization was investigated in \cite{loewenhauser2018,vogt2019a} for the application  case of image registration.
While \cite{loewenhauser2018} remained limited to an absolute Laplacian regularization through~${\| \Delta u(x) \|_1}$, the authors of \cite{vogt2019a} were able to overcome this drawback by allowing for a squared Laplacian penalization ${\| \Delta u(x) \|_2^2}$, also known as curvature regularization \cite{fischer2003}.
For scalar-valued signals $u$, higher-order regularization along the lines of the total generalized variation \cite{bredies2010} was discussed in \cite{ranftl2013,strecke2019}.

Lastly, we mention the sublabel-accurate discretization scheme for \eqref{eq:calibration_lifting} from \cite{mollenhoff2016,mollenhoff2017} that was later extended to vectorial problems in \cite{laude2016} and even manifold-valued problems in \cite{vogt2019b}.

\subsubsection{Dynamical Optimal Transport}
%\jl{The aim of this work is to point out close parallels between functional lifting and the field of \emph{dynamical optimal transport}.}
%A field that exhibits a number of striking, but apparently largely undiscussed similarities to the theory of functional lifting is the study of dynamical optimal transport.

%\jl{TODO: This should start a little earlier so everyone can recognize it, e.g., from Monge-Kantorovich -- Benaumou-Brenier}

As the aim of this work is to point out the close parallels between functional lifting and the field of \emph{dynamical optimal transport}, we shortly recapitulate the basic formulations of optimal transport.

In its simplest variant, i.e., the Monge formulation~\cite{monge1784}, the problem reads as follows:
Given two probability measures~$\mu_0$ and~$\mu_1$ on $X \subset \R^d$, a transport map~${T: X \to X}$ is sought which minimizes the transport cost
\begin{equation}
	\int_{X} c(x, T(x)) \, \mu_0(\d x)
	\label{eq:monge_problem}
\end{equation}
under the constraint that $\mu_0(T^{-1}(A)) = \mu_1(A)$ holds for all measurable~${A \subset X}$.
Therein, $c: X \times X \to \R$ denotes a non-negative local cost function such as~${c(x, y) = \| x - y\|^p}$ for $p \geq 1$.

Since a solution~$T$ to~\eqref{eq:monge_problem} does not always exist, the more general Kantorovich formulation from~\cite{kantorovich1942}
\begin{equation}
	\inf \biggl\{ \int_{X \times X} c(x, y) \, \gamma(\d x, \d y) : \gamma \in \Pi(\mu_0, \mu_1) \biggr\},
	\label{eq:kantorovich_problem}
\end{equation}
\vspace{-2em}
\begin{multline}
	\Pi(\mu_0, \mu_1) := \{ \gamma \in \mathcal{P}(X \times X) :\\ \mu_0(A) = \gamma(A \times X), \mu_1(A) = \gamma(X \times A) \ \forall \text{ meas. } A \subset X \},
	\label{eq:ot_couplings}
\end{multline}
is often considered -- see~\cite{villani2009} for details.

\textit{Dynamical optimal transport} on the other hand refers to a \textit{time-dependent} formulation of~\eqref{eq:monge_problem} or, respectively,~\eqref{eq:kantorovich_problem} and was originally devised as a numerical scheme~\cite{benamou2000} for solving these problems in the case that~$\mu_0$ and~$\mu_1$ admit densities~$\rho_0$ and~$\rho_1$ with respect to the Lebesgue measure~$\mathcal{L}^d$.
The corresponding optimization problem for cost functions $c(x, y) = \| x - y\|^p/p$, $p \geq 1$, reads
\begin{multline}
	\inf_{\rho, m} \bigg\{ \int_{ X \times [0, 1] } \frac{\| m(x, t) \|^p}{p \rho(x, t)} \d (x, t) : \partial_t \rho + \ddiv_x m = 0, \rho(\cdot, 0) = \rho_0, \rho(\cdot, 1) = \rho_1 \bigg\},
	\label{eq:dynamical_ot_densities}
\end{multline}
where $\rho : X \times [0, 1] \to \R$ can be interpreted as an interpolation between~$\rho_0$ and~$\rho_1$ and where~${m : X \times [0, 1] \to \R^d}$ describes the momentum of the underlying mass transport.

%Even though it was originally devised as a numerical scheme for the Monge problem of an optimal transport between two densities on $\R^d$ in \cite{benamou2000}, the method was soon extended to probability measures on compact Euclidean sets $X \subset \R^d$ in \cite{brenier2003}.
Finally, a dynamical formulation of the transport problem for general probability measures $\mu_0, \mu_1$ which do not necessarily admit densities is introduced in~\cite{brenier2003} through duality: 
\begin{multline}
	\sup_{(\alpha, \beta) \in \mathcal{K}_q} \int_{X \times [0,1]} \alpha \d \mu + \int_{X \times [0,1]} \langle \beta, \d E \rangle \\
	\text{s.t.} \quad \partial_t \mu + \ddiv_x E = 0, \mu(0) = \mu_0, \ \mu(1) = \mu_1
	\label{eq:benamou_brenier}
\end{multline}
where $\mu : [0, 1] \to \mathcal{P}(X)$ and $E : [0, 1] \to \mathcal{M}(X, \R^d) := \mathcal{M}(X)^d$ assign probability measures and, respectively, vectorial Radon measures to each~${t \in [0, 1]}$.
The dual constraint set~$\mathcal{K}_q$ corresponding to the cost~${c(x, y) = \| x - y\|^p/p}$, $p > 1$, (analogously to~\eqref{eq:dynamical_ot_densities}) is given by
\begin{multline}
	\mathcal{K}_q := \biggl\{ (\alpha, \beta) \in C_b( X \times [0,1], \R \times \R^d) : \\
	\alpha(x, t) + \frac{ \| \beta(x, t) \|^q }{q} \leq 0 \ \forall (x, t) \in X \times [0,1] \biggr\}
	\label{eq:benamou_brenier_dual_set}
\end{multline}
with $q > 1$ as the dual exponent explained through $\frac{1}{p} + \frac{1}{q} = 1$.

Interestingly, the derivation of~\eqref{eq:benamou_brenier} in~\cite{brenier2003} starts out with pointwise Dirac measures $\mu(t) = \delta_{x_t}$, $x_t \in X$, and then extends the problem to arbitrary (pointwise) probability measures $\mu$ -- a congruency to the above-described approach in functional lifting.
While we discuss the connection between dynamical optimal transport and functional lifting in greater detail later on in this article, we already point out the structural similarities between the objective functionals~\eqref{eq:calibration_lifting} and~\eqref{eq:benamou_brenier} as well as the constraint sets~\eqref{eq:calibration_dual_set} and~\eqref{eq:benamou_brenier_dual_set}.

More extensive discussions of dynamical optimal transport are given in~\cite[Ch.~8]{ambrosio2008} as well as~\cite[Ch. 4-6]{santambrogio2015} from which we borrow the terminology of \textit{Benamou-Brenier functionals} for quantities of the form~\eqref{eq:benamou_brenier} and their generalizations.

One such generalization which will be relevant to our application is the extension to measure-valued mappings \textit{depending on multiple variables} instead of a sole dependence on time as in~\eqref{eq:benamou_brenier} and~\eqref{eq:benamou_brenier_dual_set}.
This scenario was first studied in~\cite[Sect.~3]{brenier2003} where a notion of harmonicity is introduced for functions $\mu : \Omega \to \mathcal{P}(X)$ on bounded $\Omega \subset \R^n$.
Recent advances on the existence of harmonic extensions for the corresponding Dirichlet problem were made in \cite{lavenant2019,lavenant2019phd}.
Note, however, that while an extension of the continuity equation from~\eqref{eq:benamou_brenier} to multidimensional domains~$\Omega$ is mathematically straightforward, its physical interpretation as a model for conserved quantities in spacetime (such as electric charge) is lost.

As standard references for optimal transport in general, we refer to the books~\cite{villani2009,santambrogio2015} and, for its computational aspects, to \cite{peyre2019}.

\subsection{Contribution}
\label{subsec:contribution}
In this article we propose a novel measure-valued lifting strategy for energies of the form~\eqref{eq:target_functional} which is inspired by dynamical optimal transport formulations.
We thereby lay the missing mathematical groundwork for the commonly used approach of using probability measures as lifted variables.

Our proposed framework features a modular structure based on the \textit{augmented reformulation} of~\eqref{eq:target_functional} as
\begin{equation}
        \tilde{F}(u, p) := \int_{\Omega} f(x, u(x), p(x)) \d x \quad \text{s.t.} \quad p = Lu.
        \label{eq:augmented_functional}
\end{equation}
This allows one to handle the lifting of $\tilde{F}(p, u)$ and that of the constraint $p = Lu$ in separate steps:
In the former case, we prove as a central theoretical contribution an integral representation for our generalized Benamou-Brenier functional~$\mathfrak{B}_f$ which leads to the equivalence $\mathfrak{B}_f((\delta_u, p \delta_u)) = \tilde{F}(u, p)$.

In the latter case, the constraint $p = Lu$ is translated into a corresponding condition for the lifted variables based on the particular choice of the differential operator $L$.
For each of the three operators $L = \nabla$, $L = \Delta$ and, as a novelty in functional lifting,~${L = \nabla^2}$, we provide these conditions in the form of weakly defined \textit{continuity equations} reminiscent of the one in~\eqref{eq:benamou_brenier}.

Furthermore, we establish detailed connections to the original scalar-valued lifting strategy~\eqref{eq:calibration_lifting} from \cite{pock2008,pock2010} -- see Figure~\ref{fig:ot:calib-dyn} for an illustration -- and to the notion of harmonic mappings in the Wasserstein space from~\cite{brenier2003,lavenant2019}.
As the central theoretical results of this article are based on one of the authors' thesis in \cite[Chapter~6]{vogt2020}, we also refer to that work for connections to several other lifting models such as the lifting strategy based on Cartesian currents from~\cite{mollenhoff2019}.

At last, we demonstrate the applicability of our approach as well as its compatibility with sublabel-accurate discretization schemes on a variety of numerical experiments on standard imaging problems.

% \jl{TODO: mention recent paper about hyperelastic lifting}

\section{A Modular Framework for Measure-Valued Liftings}
\label{sec:framework}

\subsection{Preliminaries and Notation}
\label{subsec:preliminaries}

As we will be concerned with lifting problems of the form \eqref{eq:target_functional} to the space of measures on the product space $\Omega \times \Gamma$ for open and bounded $\Omega \subset \R^d$ and compact $\Gamma \subset \R^s$, we introduce $U := \Omega \times \Gamma$ and $n := d + s$ as notational shorthands. 
Accordingly, we will often write the integrand as $f(t, p)$ instead of $f(x, u, p)$ for $t = (x, u) \in U$. The convex conjugate $f^*(t, \xi)$ is always assumed to be taken with respect to the last variable $f^*(t, \xi) := \sup_{p \in \R^m} \langle \xi, p \rangle - f(t, p)$.

Bearing in mind the duality between the space $\mathcal{M}(U, \R^k)$ of vectorial Radon measures and the space $C_0(U, \R^k)$, i.e., the closure of the set of all compactly supported continuous functions from $U$ to $\R^k$, we define the \textit{graph-concentrated measure} $\delta_u$ corresponding to~${u : \Omega \to \Gamma}$ as
\begin{equation}
        \int_U \phi \d \delta_u := \int_{\Omega} \phi(x, u(x)) \d x
        \label{eq:delta_u_def}
\end{equation}
for all $\phi \in C_0(U) := C_0(U, \R)$.
Analogously, \textit{weakly measurable} measure-valued functions $\mu : \Omega \to \mathcal{P}(\Gamma)$ with $\mu_x := \mu(x)$ are defined as functions for which the mapping $x \mapsto \int_{\Gamma} \phi \d \mu_x$ is measurable on $\Omega$ for all $\phi \in C_0(\Gamma)$.
The set of all such functions is denoted by $L_w^{\infty}(\Omega, \mathcal{P}(\Gamma))$.

We begin our considerations with the trivial observation that if one optimizes over the functions $u\in\mathcal{U}$ and explicitly enforces the constraint $p=Lu$, the original problem and the augmented problem agree:
%\jl{the trivial observation that if one optimizes over the functions $u\in\mathcal{U}$ and explicitly enforces the constraint $p=Lu$, the original problem and the augmented problem agree:}
\begin{equation}
        \inf_{u \in \mathcal{U}} F(u) \quad = \quad \inf_{u \in \mathcal{U}} \tilde{F}(u, p) \ \ \text{s.t.} \ \ p = Lu
        \label{eq:augmentation_equivalence}
\end{equation}
%\jl{TODO: Do you mean ``=''?}
%\jl{TODO: I think we should have a consistent notation for non-augmented vs. augmented, maybe $F$ vs. $\bar{F}$. Using the same symbol $F$ for both looks wrong.}
for $F(u)$ from \eqref{eq:target_functional}, $\tilde{F}(u, p)$ from \eqref{eq:augmented_functional} and $\mathcal{U}$ as a suitable function space of sufficiently regular functions, e.g., $W^{1,1}(\Omega, \Gamma)$ in case of $L = \nabla$.
We will refer to~$\tilde{F}(u, p)$ as the \textit{augmented functional} and to $p = Lu$ as the \textit{augmentation constraint}.
In the following, these two components will be lifted separately which allows for a modular formulation.

\subsection{Benamou-Brenier Functional and Integral Representation}
\label{subsec:benamou_brenier_functional}
%\jl{paragraph is new}
The augmented functional $\tilde{F}$ in \eqref{eq:augmented_functional} still inherits all undesired non-convexities of the original energy. In the following, we will instead represent the pair $(u,p)$, where $p$ should ultimately be forced to be equal to $Lu$,  by the \emph{measure} $(\delta_u,p\delta_u)$ over $V$ . The intention is that this will allow to construct a \emph{convex} functional~$\mathfrak{B}_f$, which we term \emph{generalized Benamou-Brenier functional}, so that
\beq
\mathfrak{B}_f ((E, \mu)) = \tilde{F}(u, p).
\eeq
We claim that such a $\mathfrak{B}_f$ can be constructed as follows:
%\jl{We claim that such a $\mathfrak{B}_f$ can be constructed as follows:}
%\jl{TODO: it would seem more intuitive to have $(\mu,E)$ instead to be compatible with the order of $(u,p)$?}
%In order to lift $F(u, p)$, we define a \textit{generalized Benamou-Brenier functional} in the following sense.
%\jl{TODO: Do we need assumptions on $V$?}
%\jl{TODO: maybe change mathfrak to mathcal}
\begin{defn}
        For $V \subset \R^n$, let $f: V \times \R^m \to [0, \infty]$ be lower semicontinuous in both variables and convex in the second variable.
        Then the \textup{generalized Benamou-Brenier functional} $\mathfrak{B}_f : \mathcal{M}(V, \R^{m+1}) \to [-\infty, \infty] =: \overline{\R}$ is defined as
        \begin{equation}
                \mathfrak{B}_f(\nu) := \sup_{\phi \in \mathcal{K}_f} \langle \nu, \phi \rangle,
                \label{eq:bb_functional}
        \end{equation}
        where $\langle \nu, \phi \rangle := \int_V \langle \phi, \d \nu \rangle$ is the
        % \jl{the}
        dual pairing between $\nu$ and $\phi \in C_0(V, \R^{m+1})$ in the sense of Riesz-Markov and
        \begin{equation}
                \mathcal{K}_f := \{ (\phi^{\xi}, \phi^{\lambda}) \in C_0(V, \R^m \times \R) : f^*(t, \phi^{\xi}(t)) + \phi^{\lambda}(t) \leq 0 \ \forall t \in V \}
                \label{eq:bb_dual_set}
        \end{equation}
        is the set of \emph{dually admissible vector fields.}
        \label{def:generalized_bb}
\end{defn}

The lifted functional $\mathfrak{B}_f$ has much better properties than $F$ and $\tilde{F}$ with regards to existence, and it is even convex irrespective of convexity of $f$:
%\jl{The lifted functional $\mathfrak{B}_f$ has much better properties than $f$ with regards to existence, and it is even convex irrespective of convexity of $f$:}

%To give an idea of the way in which the assumptions on $f$ shape the regularity of $\mathfrak{B}_f$, we present the following proposition.
\begin{prop}
        Under the assumptions of Definition~\ref{def:generalized_bb}, the functional $\mathfrak{B}_f$ is nonnegative, convex and lower semicontinuous on $\mathcal{M}(V, \R^{m+1})$.
\end{prop}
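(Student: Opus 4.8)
The plan is to read off all three properties directly from the variational structure of $\mathfrak{B}_f$ as a pointwise supremum of a fixed family of continuous linear functionals on $\mathcal{M}(V,\R^{m+1})$, indexed by $\phi \in \mathcal{K}_f$. Observe first that neither convexity nor lower semicontinuity will require any of the assumptions on $f$: they are purely formal consequences of the sup-of-affine-functionals form, and the hypotheses on $f$ only serve to guarantee that $\mathcal{K}_f$ is nonempty, which is what I will exploit for nonnegativity.

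For convexity and lower semicontinuity, I would fix $\phi = (\phi^\xi, \phi^\lambda) \in \mathcal{K}_f \subset C_0(V, \R^{m+1})$ and consider the map $\nu \mapsto \langle \nu, \phi \rangle = \int_V \langle \phi, \d\nu\rangle$. Since $\mathcal{M}(V, \R^{m+1})$ is the dual of $C_0(V, \R^{m+1})$ via the Riesz--Markov pairing, this map is linear---hence convex---and weak-$*$ continuous by the very definition of the weak-$*$ topology. A pointwise supremum of convex functions is convex, and a pointwise supremum of weak-$*$ continuous, \emph{a fortiori} lower semicontinuous, functions is weak-$*$ lower semicontinuous; taking the supremum over $\phi \in \mathcal{K}_f$ therefore yields that $\mathfrak{B}_f$ is convex and weak-$*$ lower semicontinuous on $\mathcal{M}(V, \R^{m+1})$ (and, the norm topology being finer, also strongly lower semicontinuous).

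For nonnegativity it suffices to exhibit one admissible field at which the pairing is manifestly $\geq 0$, and the natural candidate is $\phi \equiv 0$. The zero field certainly lies in $C_0(V, \R^{m+1})$, so I only need to check the constraint $f^*(t, 0) + 0 \leq 0$ for all $t \in V$. Here the hypothesis $f \geq 0$ enters: by definition of the convex conjugate in the last variable,
\begin{equation*}
        f^*(t, 0) = \sup_{p \in \R^m} \big(\langle 0, p\rangle - f(t, p)\big) = -\inf_{p \in \R^m} f(t, p) \leq 0,
\end{equation*}
since $f(t, \cdot) \geq 0$ (the estimate holds trivially, with value $-\infty$, if $f(t, \cdot) \equiv +\infty$). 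Thus $0 \in \mathcal{K}_f$, and consequently
\begin{equation*}
        \mathfrak{B}_f(\nu) = \sup_{\phi \in \mathcal{K}_f} \langle \nu, \phi\rangle \geq \langle \nu, 0\rangle = 0
\end{equation*}
for every $\nu \in \mathcal{M}(V, \R^{m+1})$, which is the claim.

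I do not expect a genuine obstacle here: the only points that require care are identifying the correct topology for the lower semicontinuity statement (the weak-$*$ topology, which is the one relevant for applying the direct method later), and recognizing that the assumption $f \geq 0$---rather than mere boundedness from below---is exactly what makes $\phi \equiv 0$ admissible and hence drives nonnegativity.
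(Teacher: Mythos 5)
Your proof is correct and follows essentially the same route as the paper's: nonnegativity via $0 \in \mathcal{K}_f$ (which the paper asserts directly from $f \geq 0$, and you justify by computing $f^*(t,0) = -\inf_p f(t,p) \leq 0$), and convexity plus lower semicontinuity from $\mathfrak{B}_f$ being a pointwise supremum of continuous linear functionals on the dual space $\mathcal{M}(V, \R^{m+1})$. Your additional care in identifying the weak-$*$ topology as the relevant one for lower semicontinuity is a useful clarification the paper leaves implicit, but it is a refinement of the same argument rather than a different one.
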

\begin{proof}
        The nonnegativity of $f$ implies $0 \in \mathcal{K}_f$ and therefore $\mathfrak{B}_f(\nu) \geq \langle \nu, 0 \rangle = 0$.
        Since $\mathcal{M}(V, \R^{m+1})$ is the dual space of $C_0(V, \R^m \times \R)$, $\mathfrak{B}_f$ represents a pointwise supremum over a family of continuous linear functions and is thus both convex and lower semicontinuous. \qed
\end{proof}

%Before we state the main result about the functional $\mathfrak{B}_f$, namely its integral representation in Theorem~\ref{thm:bb_integral_representation}, we first discuss its structural properties in order to familiarize readers with the concept.

%\jl{To shed more light on the definition of $\mathfrak{B}_f$, we recall two definitions from finite-dimensional convex analysis:}
To shed more light on the definition of $\mathfrak{B}_f$, we recall two definitions from finite-dimensional convex analysis:
the \textit{support function} $\sigma_S(x) := \sup_{y \in S} \langle x, y \rangle$ of a set $S \subset \R^n$, and the \textit{perspective function}
\begin{equation}
        h_g(\xi, \lambda) := \begin{cases}
        \lambda g(\xi / \lambda), & \lambda > 0,\\
        \lim_{\rho \searrow 0}  g(\xi / \rho), & \lambda = 0,\\
        +\infty, & \lambda < 0,
        \end{cases}
        \label{eq:perspective_fct}
\end{equation}
of a proper, convex and lower semicontinuous function $g : \R^{n-1} \to \overline{\R}$.
The two definitions are linked by the fact that the support function of the set
\beq
S_g := \{ (\xi, \lambda) \in \R^{n-1} \times \R : g^*(\xi) + \lambda \leq 0 \}
\eeq
is equal to the perspective function $h_g$~\cite[Corollary~13.5.1]{rockafellar1997}.

Note the similarity between the definitions of \eqref{eq:bb_dual_set} and $S_g$:
The former can roughly be viewed as a pointwise variant of the latter (for every point in $V$).
Likewise, the Benamou-Brenier functional~$\mathfrak{B}_f$ is defined analogously to the
%\jl{the Benamou-Brenier functional~$\mathfrak{B}_f$ is defined analogously to the}
support function $\sigma_S$.

%Bear in mind, however, that the notation $\langle \cdot, \cdot \rangle$ in \eqref{eq:bb_functional} refers to a dual pairing explained by an integral of a ``true'' \jl{TODO: I don't understand what is meant by ``true'' here} inner product and that it therefore reflects the overall pointwise nature of the construction in Definition~\ref{def:generalized_bb}.

In accordance with the above observations, we define the following:
\begin{defn}
        For $f: V \times \R^m \to [0, \infty]$ as in Definiton~\ref{def:generalized_bb}, its \textup{pointwise perspective function} $\tilde{h}_f: V \times \R^m \times \R \to [0, \infty]$ is given by
        \begin{equation}
                \tilde{h}_f(t, \xi, \lambda) := \begin{cases}
                \lambda f(t, \xi / \lambda), &\lambda > 0,\\
                \lim_{\rho \searrow 0} \rho f(t, \xi / \rho) , &\lambda = 0,\\
                +\infty, &\lambda < 0.
                \end{cases}
                \label{eq:pointwise_perspective}
        \end{equation}
        \label{def:pointwise_perspective}
\end{defn}

We are now ready to present our main result.
\begin{thm}
        Let $V \subset \R^n$ be locally compact and let $f: V \times \R^m \to [0, \infty]$ be as in Definition~\ref{def:generalized_bb} with the additional requirement that $t \mapsto f(t, 0)$ is locally bounded.
        Furthermore, let $\nu \in \mathcal{M}(V, \R^{m+1})$ and let the Radon-Nikodym density of $\nu$ with respect to its total variation $|\nu|$ be denoted by $(p^{\xi}, p^{\lambda}) \in L_{|\nu|}^1 (V, \R^m \times \R)$.
        Then, one has the integral representation
        \begin{equation}
                \mathfrak{B}_f(\nu) = \int_V \tilde{h}_f \left( t, p^{\xi}(t), p^{\lambda}(t) \right) \, |\nu|(\mathrm{d} t).
                \label{eq:bb_integral_representation}
        \end{equation}
        \label{thm:bb_integral_representation}
\end{thm}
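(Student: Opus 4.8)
The plan is to recognize the right-hand side of \eqref{eq:bb_integral_representation} as an integral of support functions, to identify $\mathfrak{B}_f$ itself as the support function of the set of continuous sections of a pointwise constraint multifunction, and thereby to reduce the theorem to an interchange of supremum and integration that is supplied by \cite{perkkio2018}.

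First I would fix $t \in V$ and apply \cite[Corollary~13.5.1]{rockafellar1997} to the function $g = f(t, \cdot)$, which is proper, convex and lower semicontinuous by the hypotheses of Definition~\ref{def:generalized_bb}. This identifies the pointwise perspective function with a support function, namely $\tilde h_f(t, \xi, \lambda) = \sigma_{K_f(t)}(\xi, \lambda)$, where $K_f(t) := \{ (\eta, \mu) \in \R^m \times \R : f^*(t, \eta) + \mu \leq 0 \}$ is exactly the pointwise version of $\mathcal{K}_f$ from \eqref{eq:bb_dual_set}. Since $f(t, \cdot) \geq 0$ forces $f^*(t, 0) = -\inf_p f(t, p) \leq 0$, the pair $(0,0)$ always lies in $K_f(t)$, so the sets are nonempty; together with the local boundedness of $t \mapsto f(t, 0)$, which bounds $f^*(\cdot, 0)$ from below locally, this guarantees that $\tilde h_f$ is a proper normal integrand and that admissible continuous vector fields exist.

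Next I would use the Radon–Nikodym decomposition $\nu = (p^{\xi}, p^{\lambda})\, |\nu|$ to rewrite the pairing as $\langle \nu, \phi \rangle = \int_V \langle \phi(t), (p^{\xi}(t), p^{\lambda}(t)) \rangle \, |\nu|(\mathrm{d}t)$, so that $\mathcal{K}_f$ is precisely the family of those $\phi \in C_0(V, \R^{m+1})$ that are continuous sections of $t \mapsto K_f(t)$. The easy inequality ``$\leq$'' then follows immediately: for any admissible $\phi \in \mathcal{K}_f$ one has the pointwise bound $\langle \phi(t), (p^{\xi}(t), p^{\lambda}(t)) \rangle \leq \sigma_{K_f(t)}(p^{\xi}(t), p^{\lambda}(t)) = \tilde h_f(t, p^{\xi}(t), p^{\lambda}(t))$; integrating against $|\nu|$ and taking the supremum over $\phi$ yields $\mathfrak{B}_f(\nu) \leq \int_V \tilde h_f(t, p^{\xi}(t), p^{\lambda}(t)) \, |\nu|(\mathrm{d}t)$. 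Measurability of the integrand is ensured because $\tilde h_f$ is a normal integrand composed with the $|\nu|$-measurable density.

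The hard part will be the reverse inequality, which requires showing that the pointwise suprema $\sigma_{K_f(t)}$ can be approximated, uniformly enough to integrate, by a \emph{single} admissible vector field in $C_0(V, \R^{m+1})$. This is the step where I expect \cite{perkkio2018} to be essential: its integral-representation and duality results for convex integral functionals on spaces of continuous functions furnish exactly the interchange of $\sup$ and $\int$, under the local compactness of $V$ and the normal-integrand and local-boundedness conditions collected above. A pleasant feature, worth emphasizing, is that because each $\tilde h_f(t, \cdot)$ is a support function and hence positively homogeneous of degree one, no separate recession (singular) term is needed: the single integral against the total variation $|\nu|$ simultaneously accounts for the absolutely continuous and the singular parts of $\nu$. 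Verifying that all hypotheses of \cite{perkkio2018} are met — properness and lower semicontinuity of $\tilde h_f$, nonemptiness of $K_f(t)$ via $(0,0) \in K_f(t)$, and the measurable-selection regularity of $t \mapsto K_f(t)$ — then closes the gap and establishes \eqref{eq:bb_integral_representation}.
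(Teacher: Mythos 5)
Your overall architecture matches the paper's proof: identify $\tilde h_f(t,\cdot,\cdot)$ as the support function of the pointwise constraint set $Q(t) := \{(\xi,\lambda) \in \R^m \times \R : f^*(t,\xi) + \lambda \leq 0\}$ via \cite[Corollary~13.5.1]{rockafellar1997}, recognize $\mathfrak{B}_f$ as the support function of the set $C(Q)$ of continuous selections (which equals $\mathcal{K}_f$ from \eqref{eq:bb_dual_set}), and invoke \cite[Theorem~1]{perkkio2018} to perform the exchange of supremum and integral. Your observations that $(0,0) \in Q(t)$ for every $t$ (so the zero field is a continuous selection, making $C(Q)$ nonempty) and that the positive one-homogeneity of $\tilde h_f(t,\cdot)$ obviates any separate recession/singular term are both correct.

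However, there is a genuine gap at exactly the point where you defer to \cite{perkkio2018}. The cited result (Theorem~\ref{thm:perkkio_integral_representation} in the paper) is an \emph{equivalence}: the integral representation holds if and only if the multifunction $t \mapsto Q(t)$ is \emph{inner semicontinuous}, i.e., $Q^{-1}(W)$ is open for every open set $W \subset \R^{m+1}$. This is a topological continuity requirement on the multifunction, not the ``measurable-selection regularity'' you list among the hypotheses to be checked; measurability of $t \mapsto Q(t)$ alone is not sufficient, and without inner semicontinuity the representation can genuinely fail. Deriving inner semicontinuity of $Q$ from the lower semicontinuity of $f$ is the actual technical core of the proof: in the paper it is Proposition~\ref{prop:Q_isc_f_lsc} (following \cite[Theorem~8]{bouchitte1988}), proved by contradiction using hyperplane separation of $Q(t^k)$ from a ball, a uniform bound on the separating vectors allowing extraction of a convergent subsequence, and the lower semicontinuity of the perspective function $\tilde h_f$ (Lemma~\ref{lem:perspective_lsc}). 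That lemma is also where the local boundedness of $t \mapsto f(t,0)$ genuinely enters; in your proposal this hypothesis appears only in the (insufficient) role of bounding $f^*(\cdot,0)$ from below. Until inner semicontinuity of $Q$ is established, your appeal to \cite{perkkio2018} does not close the reverse inequality, so the proof as proposed is incomplete.
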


Local compactness of $V \subset \R^n$ in Theorem~\ref{thm:bb_integral_representation} is defined through the criterion, that every $t \in V$ has a compact neighborhood in $V$ -- in our application case, namely $V = U = \Omega \times \Gamma$, this condition will always be met thanks to the compactness of $\Gamma$~\cite[Theorem~18.6]{willard1970}.
Likewise, the boundedness assumption on~$f$ follows from the typical data-term/regularizer structure $f(t, p) = \rho(t) + \eta(p)$ in which the regularizer commonly satisfies $\eta(0) = 0$ while the data-term~$\rho$ is bounded on $U$.

The motivation behind this assumption is the fact that it implies lower semicontinuity and thereby measurability of $\tilde{h}_f$ as it is required in order for the integral~\eqref{eq:bb_integral_representation} to be well-defined.
Precisely, one has the following connection dating back to~\cite[Theorem~3.1]{dalmaso1979}.
\begin{lem}
        The perspective function $\tilde{h}_f: V \times \R^m \times \R \to [0, \infty]$ of a function~${f: V \times \R^m \to [0, \infty]}$ as in Theorem~\ref{thm:bb_integral_representation} is lower semicontinuous.
        \label{lem:perspective_lsc}
\end{lem}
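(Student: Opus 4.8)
The plan is to verify lower semicontinuity sequentially: since $V\times\Rm\times\R$ is metrizable, it suffices to show that $\liminf_k \tilde h_f(t_k,\xi_k,\lambda_k)\ge\tilde h_f(t,\xi,\lambda)$ whenever $(t_k,\xi_k,\lambda_k)\to(t,\xi,\lambda)$. Before treating the cases, I would record that the local boundedness of $t\mapsto f(t,0)$ guarantees that $f(t,\cdot)$ is proper for every $t$, so that the middle branch of \eqref{eq:pointwise_perspective} is well defined: $\tilde h_f(t,\xi,0)$ is the recession function of the proper lsc convex function $f(t,\cdot)$, and the limit $\lim_{\rho\searrow 0}\rho f(t,\xi/\rho)$ exists in $[0,\infty]$.

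I would first dispose of the two easy regimes. If $\lambda<0$, then $\lambda_k<0$ for all large $k$, hence $\tilde h_f(t_k,\xi_k,\lambda_k)=+\infty$ and the inequality is immediate. If $\lambda>0$, then $\lambda_k>0$ eventually and $\xi_k/\lambda_k\to\xi/\lambda$; joint lower semicontinuity of $f$ gives $\liminf_k f(t_k,\xi_k/\lambda_k)\ge f(t,\xi/\lambda)$, and multiplying by $\lambda_k\to\lambda>0$ (a strictly positive limit) preserves the $\liminf$ inequality, so that $\liminf_k \lambda_k f(t_k,\xi_k/\lambda_k)\ge\lambda f(t,\xi/\lambda)=\tilde h_f(t,\xi,\lambda)$. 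Equivalently, one may argue via closedness of $\epi f$ together with the positive $1$-homogeneity of the perspective on $\{\lambda>0\}$.

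The only delicate regime is $\lambda=0$, which is the heart of the lemma. Here I would exploit the support-function representation already quoted above: for fixed $t$, Rockafellar's identity \cite[Corollary~13.5.1]{rockafellar1997} applied to $g=f(t,\cdot)$ yields $\tilde h_f(t,\xi,\lambda)=\sup\{\la\eta,\xi\ra+s\lambda:(\eta,s)\in\Rm\times\R,\ f^*(t,\eta)+s\le 0\}$, and in particular $\tilde h_f(t,\xi,0)=\sup\{\la\eta,\xi\ra:\eta\in\dom f^*(t,\cdot)\}$. Fixing $\eta\in\dom f^*(t,\cdot)$ and discarding, without loss of generality, the indices with $\lambda_k<0$, the Fenchel inequality at $t_k$ produces, for those $k$ with $\lambda_k>0$, the lower bound $\tilde h_f(t_k,\xi_k,\lambda_k)\ge\la\eta,\xi_k\ra-\lambda_k f^*(t_k,\eta)$. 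The local boundedness assumption enters precisely here: since $f^*(t_k,\eta)\ge-f(t_k,0)\ge-C$ on a compact neighborhood of $t$, the negative part of $\lambda_k f^*(t_k,\eta)$ is controlled and the perspective cannot degenerate; as $k\to\infty$ one has $\la\eta,\xi_k\ra\to\la\eta,\xi\ra$, and taking the supremum over $\eta\in\dom f^*(t,\cdot)$ would recover $\tilde h_f(t,\xi,0)$.

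The main obstacle is the remaining interaction between $\lambda_k\to 0$ and the possible blow-up of $f^*(t_k,\eta)$ as $t_k\to t$: bounding $\limsup_k \lambda_k f^*(t_k,\eta)$ (and handling the sub-indices with $\lambda_k=0$, where $\eta$ need not lie in $\dom f^*(t_k,\cdot)$) requires an equi-boundedness of the conjugates along the sequence that does not follow from joint lower semicontinuity of $f$ alone. This is exactly the content of \cite[Theorem~3.1]{dalmaso1979}, whose hypotheses are met thanks to the nonnegativity of $f$ and the local boundedness of $f(\cdot,0)$; I would invoke it to close this case. Equivalently, the whole statement can be recast as closedness of $\epi\tilde h_f$, which fiberwise in $t$ is the closed conical hull of $\{1\}\times\epi f(t,\cdot)$: the $\{\lambda>0\}$ part is settled by closedness of $\epi f$ as in the easy case, and the $\{\lambda=0\}$ slice is again the Dal~Maso boundary estimate, with local boundedness providing the local equi-properness of the fibers $\epi f(t,\cdot)$ that prevents fiber-mixing in the closure.
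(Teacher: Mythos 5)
Your handling of the regimes $\lambda<0$ and $\lambda>0$ is correct and coincides with the paper's, and you correctly locate the entire difficulty at $\lambda=0$. There, however, the proposal has a genuine gap, and the dual route you sketch cannot be repaired. Your plan is to fix $\eta\in\dom f^*(t^*,\cdot)$, use the Fenchel bound $\tilde{h}_f(t_k,\xi_k,\lambda_k)\geq\langle\eta,\xi_k\rangle-\lambda_k f^*(t_k,\eta)$, and control $\limsup_k\lambda_k f^*(t_k,\eta)$ via an ``equi-boundedness of the conjugates'' to be supplied by Dal Maso's theorem. Such equi-boundedness is simply \emph{false} under the lemma's hypotheses: take $V=[0,1]$, $m=1$ and $f(t,p)=(1-t)\max(0,p-1)$. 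This $f$ is jointly continuous, nonnegative, convex in $p$, and satisfies $f(\cdot,0)\equiv 0$, yet $f^*(t,1)=+\infty$ for every $t>0$ while $1\in\dom f^*(0,\cdot)$; your lower bound then reads $\geq-\infty$ and certifies nothing (note that $\tilde{h}_f$ \emph{is} lower semicontinuous in this example, so the lemma holds while your certificate for it is vacuous). The failure is structural: joint lower semicontinuity of $f$ only prevents $f(t_k,\cdot)$ from dropping below $f(t^*,\cdot)$ on compact sets, whereas the slope deficits that blow up conjugates can escape to infinity in $p$. Consequently no argument that fixes $\eta$ and evaluates $f^*$ along the sequence can close the case $\lambda=0$, and invoking \cite[Theorem~3.1]{dalmaso1979} at exactly this point is not a proof but a citation of (essentially) the lemma itself --- the paper references that theorem only as the historical source and then proves the statement from scratch.

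What the paper actually does at $\lambda=0$, and what your argument is missing, is a purely primal interpolation that never touches $f^*$. Fix $\epsilon>0$ and choose $\delta\in\,]0,\epsilon[$ with $\tilde{h}_f(t^*,\xi^*,0)-\epsilon<\tilde{h}_f(t^*,\xi^*,\tau)$ for all $\tau\in[0,\delta]$, which is possible because the $\lambda=0$ value is defined as $\lim_{\rho\searrow 0}\rho f(t^*,\xi^*/\rho)$. Then use the already-established lower semicontinuity on the slice $\{\lambda=\delta\}$ (the $\lambda>0$ case) to get a neighborhood of $(t^*,\xi^*)$ on which $\tilde{h}_f(t,\xi,\delta)$ is nearly as large, and finally transfer this bound \emph{down} to every $\tau\in[0,\delta[$ by convexity and nonnegativity of $f$:
\begin{equation*}
        \delta f(t,\xi/\delta)\;=\;\delta f\bigl(t,(\xi/\tau)(\tau/\delta)\bigr)\;\leq\;\tau f(t,\xi/\tau)+(\delta-\tau)f(t,0)\;\leq\;\tilde{h}_f(t,\xi,\tau)+\epsilon f(t,0).
\end{equation*}
The local boundedness of $t\mapsto f(t,0)$ enters exactly here, making the error term $\epsilon f(t,0)$ uniformly small on a neighborhood of $t^*$; letting $\epsilon\to 0$ gives lower semicontinuity at $(t^*,\xi^*,0)$. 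This one-line convexity estimate is the key idea; without it, or an equivalent primal substitute, the $\lambda=0$ case in your proposal remains unproven.
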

\begin{proof}
        Lower semicontinuity of $\tilde{h}_f$ holds trivially for all $(t, \xi, \lambda)$ with $\lambda < 0$ and, by the lower semicontinuity of $f$, also for $\lambda > 0$.
        It remains to show that~$\tilde{h}_f$ is lower semicontinuous at $(t^*, \xi^*, 0)$, i.e., that for every $(t^*, \xi^*) \in V \times \R^m$ and every $M < \tilde{h}_f (t^*, \xi^*, 0)$ there exists a neighborhood $\mathcal{N}$ of $(t^*, \xi^*, 0)$ such that~${M \leq \tilde{h}_f (t, \xi, \lambda)}$ for all $(t, \xi, \lambda) \in \mathcal{N}$.
        
        Let $\epsilon > 0$ be fixed.
        By the definition~$\tilde{h}_f (t^*, \xi^*, 0) = \lim_{\rho \searrow 0} \rho f(t^*, \xi^* / \rho)$, there exists $\delta \in ]0, \epsilon[$ with
        \begin{equation}
                \tilde{h}_f (t^*, \xi^*, 0) - \epsilon < \tilde{h}_f (t^*, \xi^*, \tau) \quad \forall \tau \in [0, \delta].
                \label{eq:h_lsc_proof_1}
        \end{equation}
        By the lower semicontinuity of $\tilde{h}_f$ at $(t^*, \xi^*, \delta)$, there exists a neighborhood $\mathcal{N}'$ of $(t^*, \xi^*)$ such that
        \begin{equation}
                \tilde{h}_f (t^*, \xi^*, \delta) - \epsilon < \tilde{h}_f (t, \xi, \delta) \quad \forall (t, \xi) \in \mathcal{N}'.
                \label{eq:h_lsc_proof_2}
        \end{equation}
        By the convexity of $f$ in the last argument as well as its nonnegativity, it holds for all $\tau \in ]0, \delta[$ that
        \begin{align}
                \tilde{h}_f(t, \xi, \delta) &= \delta f(t, \xi / \delta) = \delta f(t, (\xi / \tau) (\tau / \delta)) \label{eq:h_lsc_proof_3} \\
                &\leq \tau f(t, \xi / \tau) + (\delta - \tau) f(t, 0) \label{eq:h_lsc_proof_4} \\
                &\leq \tilde{h}_f (t, \xi, \tau) + \epsilon f(t, 0). \label{eq:h_lsc_proof_5}
        \end{align}
        Together with \eqref{eq:h_lsc_proof_1} and \eqref{eq:h_lsc_proof_2}, one obtains
        \begin{equation}
                \tilde{h}_f (t^*, \xi^*, 0) - \epsilon (2 + f(t, 0)) \leq \tilde{h}_f (t, \xi, \tau).
                \label{eq:h_lsc_proof_6}
        \end{equation}
        As $t \mapsto f(t, 0)$ is assumed to be locally bounded, one can introduce an upper bound $C \geq 2 + f(t, 0) \geq 0$ on a suitable neighborhood $\mathcal{N}''$ of $t^*$, so that any lower bound
        \begin{equation}
                M := \tilde{h}_f (t^*, \xi^*, 0) - \epsilon C < \tilde{h}_f (t^*, \xi^*, 0)
        \end{equation}
        can be achieved for $\tilde{h}_f (t, \xi, \tau)$ on $\mathcal{N} := \{ (t, \xi, \tau) | (t, \xi) \in \mathcal{N}', t \in \mathcal{N}'', \tau \in [0, \delta[ \}$ by the arbitrariness of $\epsilon$.
        Since $\tilde{h}_f (t, \xi, \tau) = +\infty$ holds for all points with $\tau < 0$, one can extend $\mathcal{N}$ by the corresponding orthant, which concludes the proof. \qed
\end{proof}

\subsection{Proof of Theorem~\ref{thm:bb_integral_representation}}
\label{subsec:bb_integral_representation_proof}

\begin{figure}[t]
	\centering
	\begin{tikzpicture}
	
	\draw[->] (0, 0) -- (4, 0) node[above] {$X$};
	\draw[->] (0, 0) -- (0, 3.5) node[right] {$Y$};
	
	\filldraw[draw=none, gray, opacity=0.1] (0, 2.6) -- (4, 2.6) -- (4, 3.25) -- (0, 3.25);
	\draw[gray, dashed] (0, 2.6) -- (4, 2.6);
	\draw[gray, dashed] (4, 3.25) -- (0, 3.25);
	
	\filldraw[draw=none, gray, opacity=0.1] (1, 0) -- (1, 3.5) -- (2.33, 3.5) -- (2.33, 0);
	\draw[gray, dashed] (2.33, 3.5) -- (2.33, 0);
	\draw[gray] (1, 0) -- (1, 3.5);
		
	\filldraw[draw=none, blue, opacity=0.1] (0, 0.9) -- (0.5, 1.1) -- (1, 1) -- (1, 0.5) -- (3, 0.5) -- (3, 1.5) -- (4, 1.6) -- (4, 2.4) -- (3, 2.4) -- (1, 3) -- (1, 2) -- (0, 2);
	\draw[blue, thick] (0, 0.9) -- (0.5, 1.1) -- (1, 1) -- (1, 0.5) -- (3, 0.5) -- (3, 1.5) -- (4, 1.6);
	\draw[blue, thick] (4, 2.4) -- (3, 2.4) -- (1, 3) -- (1, 2) -- (0, 2);
	
	\node at (1, 0) {\small [};
	\node at (2.33, 0) {\small )};
	\draw[thick] (1, 0) -- (2.33, 0) node[midway, below] {$Q^{-1}(V)$};
	
	\node[rotate=90] at (0, 2.6) {\small (};
	\node[rotate=90] at (0, 3.25) {\small )};
	\draw[thick] (0, 2.6) -- (0, 3.25) node[midway, left] {$V$};
	
	\draw[red, thick] (3.2, 1.52) -- (3.2, 2.4) node[midway, right] {\small $Q(x)$};
	\node at (3.25, 0) {\small $\vert$};
	\node[below=3pt] at (3.25, 0) {$x$};
	
	\node[above=12pt] at (2, 3.5) {$Q$ \textbf{not} inner semicontinuous};
	
	%--------------------------------------
	
	\draw[->] (6, 0) -- (10, 0) node[above] {$X$};
	\draw[->] (6, 0) -- (6, 3.5) node[right] {$Y$};
	
	\filldraw[draw=none, gray, opacity=0.1] (6, 2.6) -- (10, 2.6) -- (10, 3.25) -- (6, 3.25);
	\draw[gray, dashed] (6, 2.6) -- (10, 2.6);
	\draw[gray, dashed] (10, 3.25) -- (6, 3.25);
	
	\filldraw[draw=none, gray, opacity=0.1] (7, 0) -- (7, 3.5) -- (8.33, 3.5) -- (8.33, 0);
	\draw[gray, dashed] (8.33, 3.5) -- (8.33, 0);
	\draw[gray, dashed] (7, 0) -- (7, 3.5);
	
	\filldraw[draw=none, blue, opacity=0.1] (6, 0.9) -- (6.5, 1.1) -- (7, 1) -- (7, 0.5) -- (9, 0.5) -- (9, 1.5) -- (10, 1.6) -- (10, 2.4) -- (9, 2.4) -- (7, 3) -- (7, 2) -- (6, 2);
	\draw[blue, thick] (6, 0.9) -- (6.5, 1.1) -- (7, 1);
	\draw[blue, thick, dashed] (7, 1) -- (7, 0.5);
	\draw[blue, thick] (7, 0.5) -- (9, 0.5);
	\draw[blue, thick, dashed] (9, 0.5) -- (9, 1.5);
	\draw[blue, thick] (9, 1.5) -- (10, 1.6);
	\draw[blue, thick] (10, 2.4) -- (9, 2.4) -- (7, 3);
	\draw[blue, thick, dashed] (7, 3) -- (7, 2);
	\draw[blue, thick] (7, 2) -- (6, 2);
	
	\node at (7, 0) {\small (};
	\node at (8.33, 0) {\small )};
	\draw[thick] (7, 0) -- (8.33, 0) node[midway, below] {$Q^{-1}(V)$};
	
	\node[rotate=90] at (6, 2.6) {\small (};
	\node[rotate=90] at (6, 3.25) {\small )};
	\draw[thick] (6, 2.6) -- (6, 3.25) node[midway, left] {$V$};
	
	\draw[olive, thick] plot [smooth, tension=0.5] coordinates {(6, 1.45) (7, 1.35) (8, 1.75) (9, 2) (10, 1.9)};
	\node[olive] at (8.25, 1.375) {\small $\phi \in C(Q)$};
	
	\node[above=12pt] at (8, 3.5) {$Q$ inner semicontinuous};
	
\end{tikzpicture}
	\caption{Visualization of a set-valued mapping $Q: X \rightrightarrows Y$ and the concept of inner semicontinuity.
		The preimage of every open set $V \subset Y$ under $Q$, i.e., the set of points~${x \in X}$ for which $Q(x)$ intersects $V$, needs to be an open set in order for $Q$ to be inner semicontinuous.
		\textbf{Left:} As the graph of $Q$ (blue) contains vertical boundaries, the preimage of the displayed set $V$ contains a boundary point -- therefore, $Q$ is not inner semicontinuous.
		\textbf{Right:} Once these vertical boundaries are excluded from $Q$, the displayed mapping becomes inner semicontinuous.
		Furthermore, a continuous selection~$\phi$ of $Q$ is shown, i.e., a continuous function $\phi: X \to Y$ with $\phi(x) \in Q(x)$ for all $x \in X$.
		Under suitable assumptions on $X$ and $Y$, the existence of such a function is guaranteed for inner semicontinuous $Q$ by Michael's theorem (see Theorem~\ref{thm:michael}).
	}
	\label{fig:inner_semicontinuity}
\end{figure}
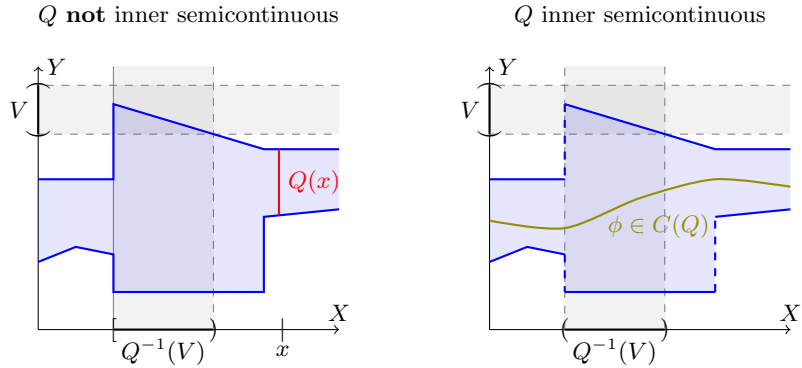

This section is concerned with a proof of Theorem~\ref{thm:bb_integral_representation}, i.e., the integral representation~\eqref{eq:bb_integral_representation} of the Benamou-Brenier functional, based on the recent results of~\cite{perkkio2018}.
Readers who are primarily interested in the application aspects of this article may safely skip ahead to Section~\ref{subsec:lifting_augmented} where we demonstrate the use of~$\mathfrak{B}_f$ for functional lifting.

We proceed our proof by showcasing that Theorem~\ref{thm:bb_integral_representation} is a special case of Theorem~1 from~\cite{perkkio2018}.
Before we can state that latter result, we first introduce the relevant prerequisites:
We denote a \textit{set-valued mapping} $Q$ from one general set~$X$ to another~$Y$, i.e.,~${Q(x) \subset Y}$ for all $x \in X$, by $Q: X \rightrightarrows Y$.
Correspondingly, the \textit{preimage} of $V \subseteq Y$ under such a mapping~$Q$ is defined as
\begin{equation}
        Q^{-1}(V) := \{ x \in X : Q(x) \cap V \neq \emptyset \}.
\end{equation}

Furthermore, we require a notion of continuity for set-valued mappings on topological spaces~$X$ and~$Y$. We call $Q: X \rightrightarrows Y$ \textit{inner semicontinuous} if~$Q^{-1}(V)$ is open for each open set $V \subset Y$.
Note that inner semicontinuity for single-valued functions does not equal lower or upper semicontinuity, but rather ``plain'' continuity of functions between topological spaces.
If $Y$ is a normed vector space, the set of \textit{continuous selections} is defined as
\begin{equation}
        C(Q) := \{ \phi \in C_0(X, Y) : \phi(x) \in Q(x) \ \forall x \in X \}.
\end{equation}
See Figure~\ref{fig:inner_semicontinuity} for a visualization of these concepts.

Lastly, a \textit{second-countable and locally compact Hausdorff space} -- the type of space that~\cite{perkkio2018} is concerned with -- is a topological space in which points can be separated by disjoint neighborhoods (\textit{Hausdorff property}), in which every point has a compact neighborhood (\textit{local compactness property}) and in which a countable collection $\mathcal{U}$ of open sets exists such that every open set can be written as the union of sets from $\mathcal{U}$ (\textit{second countability property}).
For our application scenario, i.e., $\Omega \times \Gamma \subset \R^d \times \R^s$ as in Section~\ref{subsec:preliminaries}, the question of local compactness has already been resolved in the previous Section~\ref{subsec:benamou_brenier_functional}.
As for the Hausdorff and second-countability properties, both are always satisfied for separable metric spaces \cite[Theorem~16.11]{willard1970} or, more specifically, for all subsets of Euclidean spaces.

We are now ready to present the central result from~\cite{perkkio2018}.

\begin{thm}[{\cite[Theorem~1]{perkkio2018}}]
        Let $X$ be a second-countable and locally compact Hausdorff space, let $Q: X \rightrightarrows \R^m$ be a set-valued mapping for which the set of continuous selections $C(Q)$ is nonempty.
        Furthermore, let the set $Q(x)$ be closed and convex for all $x \in X$.
        Then, the following two statements are equivalent:
        \begin{enumerate}
                \item The support function of $C(Q)$ on $\mathcal{M}(X, \R^m)$ satisfies
                \begin{equation}
                        \sigma_{C(Q)}(\nu) := \sup_{\phi \in C(Q)} \langle \nu, \phi \rangle = \int_X \sigma_{Q(x)} ((\mathrm{d} \nu) / (\mathrm{d} |\nu|) (x)) \, |\nu|(\mathrm{d} x)
                        \label{eq:perkkio_integral_representation}
                \end{equation}
                for all $\nu \in \mathcal{M}(X, \R^m)$.
                \item $Q$ is inner semicontinuous.
        \end{enumerate}
        \label{thm:perkkio_integral_representation}
\end{thm}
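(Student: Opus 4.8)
The plan is to prove the two implications \emph{(2)$\Rightarrow$(1)} and \emph{(1)$\Rightarrow$(2)} separately, after first recording the one inequality in \eqref{eq:perkkio_integral_representation} that holds with no assumption on $Q$. Write $g := (\mathrm{d}\nu)/(\mathrm{d}|\nu|)$ for the polar density, so that $|g(x)| = 1$ for $|\nu|$-a.e.\ $x$ and $\langle \nu, \phi \rangle = \int_X \langle \phi(x), g(x)\rangle\,|\nu|(\d x)$. Since every $\phi \in C(Q)$ satisfies $\phi(x) \in Q(x)$, we have $\langle \phi(x), g(x)\rangle \leq \sigma_{Q(x)}(g(x))$ pointwise, and integrating and taking the supremum over $\phi$ yields
\[ \sigma_{C(Q)}(\nu) \leq \int_X \sigma_{Q(x)}(g(x))\,|\nu|(\d x). \]
The content of the theorem is therefore the \emph{reverse} inequality together with its equivalence to inner semicontinuity.

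For \emph{(2)$\Rightarrow$(1)} I would build, for fixed $\veps > 0$, a continuous selection that nearly attains the pointwise support value $|\nu|$-almost everywhere; by truncation we may assume the right-hand side above is finite. First, Lusin's theorem applied to the $|\nu|$-measurable map $g$ yields a compact $K \subseteq X$ with $|\nu|(X \setminus K) < \veps$ on which $g$ is continuous. Inner semicontinuity of $Q$ makes $x \mapsto \sigma_{Q(x)}(g(x))$ lower semicontinuous on $K$ (each $y_0 \in Q(x_0)$ is a limit of points of $Q(x)$ as $x \to x_0$, and $g$ is continuous there), hence $|\nu|$-measurable and, being integrable, dominated from below by a continuous minorant $c \colon K \to \R$ with $c(x) \leq \sigma_{Q(x)}(g(x))$ and $\int_K (\sigma_{Q(x)}(g) - c)\,|\nu|(\d x) < \veps$. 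Define
\[ R(x) := \{\, y \in Q(x) : \langle y, g(x)\rangle \geq c(x) \,\}, \]
which has nonempty, closed, convex values. The crux of the argument is that $R$ is inner semicontinuous on $K$ — this follows from inner semicontinuity of $Q$ together with continuity of $c$ and $g$, via the standard relative-interior-and-closure argument for a single linear sublevel constraint. Michael's theorem (Theorem~\ref{thm:michael}), applicable because $K$ is compact metrizable and the values of $R$ are closed and convex, then delivers a continuous $\psi \colon K \to \R^m$ with $\psi(x) \in Q(x)$ and $\langle \psi(x), g(x)\rangle \geq c(x)$ throughout $K$.

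It remains to extend $\psi$ to a global field $\phi \in C(Q)$ and to let $\veps \downto 0$. Using a fixed $\phi_0 \in C(Q)$ (available since $C(Q) \neq \emptyset$) and the extension version of Michael's theorem, one obtains a continuous selection of $Q$ on all of $X$ agreeing with $\psi$ on $K$; because $Q$ has convex values, convex combinations of selections are again selections, so interpolating against $\phi_0$ by a Urysohn cutoff secures $\phi \in C_0(X, \R^m)$ while keeping $\phi(x) \in Q(x)$. Splitting $\langle \nu, \phi\rangle$ over $K$ and $X \setminus K$, estimating the first part by $\int_K c(x)\,|\nu|(\d x) \geq \int_K \sigma_{Q(x)}(g)\,|\nu|(\d x) - \veps$ and the second by $-\|\phi\|_\infty\,|\nu|(X \setminus K) \geq -\veps\|\phi\|_\infty$, and noting that $\sigma_{Q(x)}(g(x)) \geq \langle \phi_0(x), g(x)\rangle \geq -\|\phi_0\|_\infty$ is $|\nu|$-integrable so that $\int_{X \setminus K} \sigma_{Q(x)}(g)\,|\nu|(\d x) \to 0$, one recovers $\sigma_{C(Q)}(\nu) \geq \int_X \sigma_{Q(x)}(g)\,|\nu|(\d x)$ in the limit $\veps \downto 0$. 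I expect the inner semicontinuity of $R$ and the $C_0$-gluing to be the two genuinely delicate points.

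For \emph{(1)$\Rightarrow$(2)} I would argue by contraposition. If $Q$ is not inner semicontinuous, there are an open $V \subseteq \R^m$, a point $x_0 \in Q^{-1}(V)$ and a sequence $x_k \to x_0$ with $Q(x_k) \cap V = \emptyset$; fix $y_0 \in Q(x_0) \cap V$ and $r > 0$ with $B(y_0, r) \subseteq V$. For each $k$ the convex set $Q(x_k)$ is disjoint from the open convex ball $B(y_0, r)$, so Hahn--Banach separation yields a unit vector $w_k$ with $\sigma_{Q(x_k)}(w_k) \leq \langle w_k, y_0\rangle - r$. Passing to a subsequence, $w_k \to w$ with $|w| = 1$, and for every $\phi \in C(Q)$ the bound $\langle w_k, \phi(x_k)\rangle \leq \sigma_{Q(x_k)}(w_k) \leq \langle w_k, y_0\rangle - r$ together with continuity of $\phi$ gives $\langle w, \phi(x_0)\rangle \leq \langle w, y_0\rangle - r$. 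Testing the claimed identity against the vector measure $\nu := w\,\delta_{x_0}$, whose polar density at $x_0$ is $w$, then forces
\[ \sigma_{C(Q)}(\nu) = \sup_{\phi \in C(Q)} \langle w, \phi(x_0)\rangle \leq \langle w, y_0\rangle - r < \langle w, y_0\rangle \leq \sigma_{Q(x_0)}(w) = \int_X \sigma_{Q(x)}(g)\,|\nu|(\d x), \]
which contradicts \eqref{eq:perkkio_integral_representation}. Hence the integral representation can hold for all $\nu$ only if $Q$ is inner semicontinuous, which closes the equivalence.
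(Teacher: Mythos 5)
The paper itself contains no proof of this statement: it is imported verbatim as Theorem~1 of \cite{perkkio2018} and used as a black box (the paper only remarks that \cite{rockafellar1971} proved the compact case, and only the direction $2.\Rightarrow 1.$). So your attempt cannot be compared against a proof in the paper; it has to stand on its own. Here is my assessment.

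Your direction $1.\Rightarrow 2.$ is correct and clean: contraposition, Hahn--Banach separation of the convex set $Q(x_k)$ from the ball $B(y_0,r)$, passage to a limit unit vector $w$, and testing \eqref{eq:perkkio_integral_representation} against $\nu = w\,\delta_{x_0}$, whose total variation is $\delta_{x_0}$ and whose polar density is $w$. (Incidentally, this is the same separation-plus-Dirac mechanism the paper uses in Proposition~\ref{prop:Q_isc_f_lsc} and Proposition~\ref{prop:enforced_ac}.) The preliminary inequality $\sigma_{C(Q)}(\nu)\leq\int_X\sigma_{Q(x)}(g)\,|\nu|(\mathrm{d}x)$ and the lower semicontinuity of $x\mapsto\sigma_{Q(x)}(g(x))$ on the Lusin set $K$ are also argued correctly.

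The genuine gap is in the closing estimate of $2.\Rightarrow 1.$ You bound the contribution of $X\setminus K$ by $-\|\phi\|_\infty\,|\nu|(X\setminus K)\geq-\veps\|\phi\|_\infty$ and then ``let $\veps\downto 0$''. But $\phi=\phi_\veps$ is constructed from the Lusin set $K=K_\veps$, and nothing in your construction bounds $\|\phi_\veps\|_\infty$ uniformly in $\veps$: the near-optimal selection $\psi$ on $K_\veps$ (and its Michael extension) may blow up as $\veps\downto 0$, e.g.\ when $\sigma_{Q(x)}(g(x))$ is unbounded over $X$ or when values near the support value are only attained by vectors of large norm. Hence $\veps\|\phi_\veps\|_\infty$ need not tend to zero, and the desired inequality does not follow ``in the limit''. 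This is not a technicality; for noncompact $X$ and unbounded sets $Q(x)$ the entire difficulty of the theorem sits in exactly this tail term. The gap is repairable within your strategy, but it needs one more idea, namely choosing the cutoff \emph{after} the extension: extend $\psi$ to a continuous selection $\tilde\psi$ of $Q$ on all of $X$; fix a compact neighborhood $L$ of $K$ (local compactness) and set $C:=\sup_L|\tilde\psi-\phi_0|<\infty$; \emph{then} use outer regularity of the Radon measure $|\nu|$ to pick an open $O$ with $K\subseteq O\subseteq\mathrm{int}\,L$ and $|\nu|(O\setminus K)<\veps/(1+C)$, and only now take the Urysohn cutoff $\chi=1$ on $K$ with $\supp\chi\subset O$. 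With $\phi=\phi_0+\chi(\tilde\psi-\phi_0)$ the transition region $O\setminus K$ contributes at least $-(C+\|\phi_0\|_\infty)\,|\nu|(O\setminus K)\geq-(1+\|\phi_0\|_\infty)\veps$, while outside $O$ one has $\phi=\phi_0$ with its fixed bound; all error terms are then $O(\veps)$ with constants depending only on the truncation level $M$ and $\|\phi_0\|_\infty$, so letting $\veps\downto 0$ and then $M\to\infty$ (monotone convergence) closes the argument. Two smaller points should also be patched: your $R(x)=\{y\in Q(x):\langle y,g(x)\rangle\geq c(x)\}$ can be \emph{empty} when $c(x)$ equals a support value that is not attained, so you must arrange $c<\sigma_{Q(\cdot)}(g(\cdot))$ strictly (this strict inequality is also what makes the closure-of-the-strict-set argument for inner semicontinuity of $R$ work); and the uses of Lusin's theorem and of Theorem~\ref{thm:michael} on $K$ and on $X$ should record that $X$, being second-countable, locally compact and Hausdorff, is separable metrizable.
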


On a historical note, an earlier form of this result has already been presented in~\cite[Theorem~6]{rockafellar1971}.
That variant, however, required $X$ to be compact (instead of locally compact) and showed only the sufficient condition, i.e., that condition~\textit{2.} implies condition \textit{1.} from Theorem~\ref{thm:perkkio_integral_representation}.

For our proof of Theorem~\ref{thm:bb_integral_representation}, we will work on $X = U = \Omega \times \Gamma$ with a set-valued mapping~${Q : U \rightrightarrows \R^{m+1}}$ defined by
\begin{equation}
        Q(t) := \{ (\xi, \lambda) \in \R^m \times \R : \lambda + f^*(t, \xi) \leq 0 \}.
        \label{eq:Q_f}
\end{equation}
Since the conjugate $f^*(t, \cdot)$ is a proper, lower semicontinuous and convex function for every $t \in U$, it is easy to see that the corresponding set~$Q(t)$ is non-empty, closed and convex.
Hence, it remains to show that $C(Q)$ is non-empty and that~$Q$ is inner semicontinuous.

As it turns out in our setting, however, inner semicontinuity of~$Q$ is already sufficient for the existence of a continuous selection by Michael's theorem:

\begin{thm}[{\cite[Theorem~3.2'']{michael1956}}] 
        Assume that $X$ is a separable metric space and that $Q: X \rightrightarrows \R^m$ is inner semicontinuous with non-empty, closed and convex values for all $x \in X$.
        Then, $C(Q) \neq \emptyset$.
        \label{thm:michael}
\end{thm}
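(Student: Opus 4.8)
The statement is the finite-dimensional instance of Michael's selection theorem, so the plan is to reproduce the classical two-stage argument: first construct continuous \emph{approximate} selections, and then refine them into an exact one through a uniformly convergent iteration. Throughout I write $\mathrm{dist}(y, S) := \inf_{s \in S}\|y - s\|$ and $B(y, \epsilon)$ for the open ball.

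The key preliminary I would establish is an \emph{approximate selection lemma}: for every $\epsilon > 0$ there is a continuous $g : X \to \R^m$ with $\mathrm{dist}(g(x), Q(x)) < \epsilon$ for all $x$. To see this, note that since each $Q(x)$ is non-empty, every $x$ lies in some $Q^{-1}(B(y, \epsilon))$, and by inner semicontinuity all these preimages are open; hence the family $\{Q^{-1}(B(y, \epsilon)) : y \in \R^m\}$ is an open cover of $X$. Because a separable metric space is paracompact, I would pass to a locally finite partition of unity $\{p_i\}_i$ subordinate to this cover, with $\supp p_i \subseteq Q^{-1}(B(y_i, \epsilon))$ for suitable points $y_i$, and define $g(x) := \sum_i p_i(x)\, y_i$. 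For fixed $x$ only finitely many $p_i(x)$ are nonzero, and for each such $i$ there is $q_i \in Q(x)$ with $\|y_i - q_i\| < \epsilon$; convexity of $Q(x)$ gives $\sum_i p_i(x)\, q_i \in Q(x)$, while $\|g(x) - \sum_i p_i(x)\, q_i\| \leq \sum_i p_i(x)\|y_i - q_i\| < \epsilon$, which proves the claim.

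Next I would iterate. Set $\epsilon_n := 2^{-n}$ and build continuous $f_n$ with $\mathrm{dist}(f_n(x), Q(x)) < \epsilon_n$, starting from the lemma with $n = 0$. Given $f_n$, I define the truncated map $Q_n(x) := Q(x) \cap B(f_n(x), \epsilon_n)$, which is convex and, precisely because $\mathrm{dist}(f_n(x), Q(x)) < \epsilon_n$, non-empty. Applying the approximate-selection lemma to $Q_n$ with parameter $\epsilon_{n+1}$ produces a continuous $f_{n+1}$ with $\mathrm{dist}(f_{n+1}(x), Q_n(x)) < \epsilon_{n+1}$; since $Q_n(x) \subseteq Q(x)$ this maintains $\mathrm{dist}(f_{n+1}(x), Q(x)) < \epsilon_{n+1}$, and since $Q_n(x) \subseteq B(f_n(x), \epsilon_n)$ it forces $\|f_{n+1}(x) - f_n(x)\| < \epsilon_n + \epsilon_{n+1}$. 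As $\sum_n (\epsilon_n + \epsilon_{n+1}) < \infty$, the sequence $(f_n)$ is uniformly Cauchy and converges uniformly to a continuous $f : X \to \R^m$. Because $y \mapsto \mathrm{dist}(y, Q(x))$ is $1$-Lipschitz, $\mathrm{dist}(f(x), Q(x)) \leq \mathrm{dist}(f_n(x), Q(x)) + \|f(x) - f_n(x)\| \to 0$, and closedness of $Q(x)$ yields $f(x) \in Q(x)$, i.e.\ $f \in C(Q)$.

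The one genuinely delicate point — the step I expect to be the main obstacle — is verifying in the iteration that the truncated map $Q_n$ inherits inner semicontinuity from $Q$, so that the approximate-selection lemma may legitimately be applied to it. This needs a separate lemma to the effect that intersecting an inner semicontinuous, convex-valued map with open balls of fixed radius centred at a continuous function preserves inner semicontinuity on the region where the intersection is non-empty; all the remaining estimates are routine once this is in hand.
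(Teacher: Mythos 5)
There is nothing in the paper to compare your argument against: the paper does not prove this statement at all, but quotes it verbatim as Theorem~3.2'' of Michael (1956) and uses it purely as a black box to conclude $C(Q)\neq\emptyset$ for the specific map $Q$ in \eqref{eq:Q_f}. So the only question is whether your blind reconstruction is sound. In outline it is: what you give is the classical two-stage proof of Michael's selection theorem, and every hypothesis is used where it should be --- inner semicontinuity and non-emptiness make $\{Q^{-1}(B(y,\epsilon))\}_{y\in\R^m}$ an open cover, paracompactness of metric spaces (separability is not even needed) yields the subordinate partition of unity, convexity drives the averaging estimate, and closedness of the values enters only in the final limit step.

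Two caveats. First, the lemma you defer --- inner semicontinuity of the truncated map $Q_n(x) = Q(x)\cap B(f_n(x),\epsilon_n)$ --- is genuinely needed, and your proof is incomplete until it is written out; fortunately it is short, so you should include it rather than postpone it. Given $x_0$, an open set $V$, and $y_0 \in Q(x_0)\cap B(f_n(x_0),\epsilon_n)\cap V$, pick $\delta>0$ with $B(y_0,\delta)\subset V$ and $\|y_0 - f_n(x_0)\| + 2\delta < \epsilon_n$. Inner semicontinuity of $Q$ gives a neighborhood of $x_0$ on which $Q(x)\cap B(y_0,\delta)\neq\emptyset$, continuity of $f_n$ gives one on which $\|f_n(x)-f_n(x_0)\|<\delta$, and on the intersection any $y\in Q(x)\cap B(y_0,\delta)$ satisfies $\|y-f_n(x)\| \leq \|y-y_0\| + \|y_0-f_n(x_0)\| + \|f_n(x_0)-f_n(x)\| < \epsilon_n$, so $Q_n^{-1}(V)$ contains this intersection and is a neighborhood of $x_0$. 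Second, a mismatch you inherit from the paper rather than create: the paper's $C(Q)$ is defined inside $C_0(X,\R^m)$, i.e., selections vanishing at infinity, and neither Michael's theorem nor your construction produces that; read literally, the statement is then even false (take $Q\equiv\{1\}$ on $X=\R$, whose only selection is the constant function $1\notin C_0(\R,\R)$). What you prove --- existence of a continuous selection --- is what Michael's theorem actually asserts; in the paper's application the $C_0$-requirement is harmless anyway, because $f\geq 0$ implies $0\in Q(t)$ for every $t$, so the zero function already lies in $C(Q)$, exactly as in the paper's argument that $0\in\mathcal{K}_f$ in the proof that $\mathfrak{B}_f$ is nonnegative.
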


Finally, inner semicontinuity of $Q$ is equal to lower semicontinuity of the integrand~$f$.
The proof we present is based on the arguments given for~\cite[Theorem~8]{bouchitte1988}.

\begin{prop}
        Let $V \subset \R^n$ be nonempty, let $f: V \times \R^m \to \R$ be convex in the second variable with locally bounded $t \mapsto f(t, 0)$ and $Q: U \rightrightarrows \R^{m+1}$ as in~\eqref{eq:Q_f}.
        Then, lower semicontinuity of~$f$ is equivalent to inner semicontinuity of~$Q$.
        \label{prop:Q_isc_f_lsc}
\end{prop}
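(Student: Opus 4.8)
The plan is to prove the equivalence in both directions by directly unpacking the topological definitions of inner semicontinuity of $Q$ and lower semicontinuity of $f$, exploiting the duality $f^*(t,\cdot) \leftrightarrow Q(t)$ that links them. Recall that $Q(t) = \{(\xi,\lambda) : \lambda + f^*(t,\xi) \le 0\}$ is precisely the hypograph-type set whose support function is the perspective function $\tilde h_f(t,\cdot,\cdot)$, by the Rockafellar correspondence cited earlier. The key structural fact I would invoke is that $Q$ is inner semicontinuous at $t$ exactly when its graph $\{(t,\xi,\lambda) : (\xi,\lambda) \in Q(t)\}$ behaves like the sublevel structure of a jointly lower semicontinuous function, so the whole proposition reduces to showing that joint lower semicontinuity of $f$ on $V \times \R^m$ is equivalent to a corresponding continuity property of the dual sets $Q(t)$.

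For the forward direction ($f$ lsc $\Rightarrow$ $Q$ inner semicontinuous), I would fix an open set $W \subset \R^{m+1}$ and a point $t_0 \in Q^{-1}(W)$, so there exists $(\xi_0,\lambda_0) \in Q(t_0) \cap W$. The goal is to find a neighborhood of $t_0$ still meeting $Q(\cdot) \cap W$. Here I would use that lower semicontinuity of $f$ transfers, via conjugation, to an \emph{upper} semicontinuity property of $f^*$ along the relevant direction — more precisely, I would pick a point strictly inside $Q(t_0)$ (strict inequality $\lambda_0' + f^*(t_0,\xi_0') < 0$, available after a small perturbation using openness of $W$ and convexity), and then argue that the strict inequality is preserved in a neighborhood of $t_0$. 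The mechanism is that $f^*(t,\xi) = \sup_p \langle \xi,p\rangle - f(t,p)$ is a pointwise supremum, and lower semicontinuity of $f$ lets one control $f^*$ from above locally; combined with the local boundedness of $t \mapsto f(t,0)$, which keeps $f^*(t,\cdot)$ from blowing down, this yields a neighborhood on which $(\xi_0',\lambda_0') \in Q(t) \cap W$.

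For the reverse direction ($Q$ inner semicontinuous $\Rightarrow$ $f$ lsc), I would argue contrapositively or directly by reconstructing $f$ from $Q$: since $\tilde h_f(t,\xi,\lambda) = \sigma_{Q(t)}(\xi,\lambda)$ and $f(t,p) = \tilde h_f(t,p,1)$, lower semicontinuity of $f$ follows once one shows that $t \mapsto \sigma_{Q(t)}(\xi,\lambda)$ is jointly lower semicontinuous in $(t,\xi,\lambda)$. Inner semicontinuity of $Q$ gives exactly the information needed: if $(\xi,\lambda)$ is a fixed evaluation direction, any continuous selection or any point of $Q(t_0)$ achieving the support value near the supremum persists into a neighborhood, which pushes $\sigma_{Q(t)}$ up and delivers the desired lim-inf inequality. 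I would take a sequence $t_k \to t_0$ and $p_k \to p_0$ and, using the dual characterization together with inner semicontinuity to approximate the optimal dual point $(\xi^*,\lambda^*) \in Q(t_0)$ by selections valid near $t_0$, bound $\liminf_k f(t_k,p_k)$ from below by $f(t_0,p_0)$.

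The main obstacle I expect is the careful handling of the conjugation step in the forward direction: lower semicontinuity of $f$ does not give continuity of $f^*$ for free, and one must exploit the structure of the problem — the local boundedness of $f(\cdot,0)$ and the convexity in the second argument — to ensure the strict feasibility $\lambda + f^*(t,\xi) < 0$ is an \emph{open} condition in $t$. This is precisely where the hypotheses of the proposition (convexity in $p$, local boundedness of $f(\cdot,0)$) are essential, and where I would lean on the argument structure of~\cite[Theorem~8]{bouchitte1988} as indicated, adapting it to the non-compact, locally bounded setting relevant here.
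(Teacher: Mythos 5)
Your reverse direction ($Q$ inner semicontinuous $\Rightarrow$ $f$ lsc) is in outline the paper's own argument (pick a near-optimal affine minorant $(\xi^*,\lambda^*) \in Q(t^*)$, use inner semicontinuity to get nearby selections $(\xi(t),\lambda(t)) \in Q(t) \cap W$, and estimate), and is fine. The forward direction, however, has a genuine gap: your mechanism fixes a strictly feasible point $(\xi_0',\lambda_0')$ and claims the strict inequality $\lambda_0' + f^*(t,\xi_0') < 0$ persists for $t$ near $t_0$, i.e.\ that $t \mapsto f^*(t,\xi)$ is (locally, up to the gap) upper semicontinuous at fixed $\xi$. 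This is false under the hypotheses, and no perturbation inside $W$ can save it. Concretely, take $V = \R$, $m = 1$, $f(t,p) := \max(0,\, 1 - tp)$. This $f$ is jointly \emph{continuous}, nonnegative, convex in $p$, and $f(t,0) \equiv 1$. One computes $f^*(0,0) = -1$ and $f^*(0,\xi) = +\infty$ for $\xi \neq 0$, so $Q(0) = \{0\} \times (-\infty, 1]$; but for $t \neq 0$ one has $\inf_p f(t,p) = 0$, hence $f^*(t,0) = 0$. Thus for $W = B_{1/2}((0,1))$, \emph{every} strictly feasible point of $Q(0)$ in $W$ has the form $(0,\lambda')$ with $\lambda' > 1/2$, and none of these lies in $Q(t)$ for any $t \neq 0$, since $\lambda' + f^*(t,0) = \lambda' > 0$. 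Yet $Q$ \emph{is} inner semicontinuous here (as the proposition asserts): the witnesses are $(-t, 1) \in Q(t)$ (resp.\ $(|t|,1)$ for $t<0$), whose $\xi$-coordinate moves with $t$. So any proof of the forward direction must allow the point of $Q(t) \cap W$ to move; holding it fixed is fatal. Note also that you have the role of the hypothesis backwards: local boundedness of $f(\cdot,0)$ gives $f^*(t,\xi) \geq -f(t,0) \geq -C$, a bound on $f^*$ from \emph{below}, whereas your argument needs control from \emph{above}, which is exactly what fails.

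The paper circumvents this by arguing the forward direction by contradiction with a hyperplane separation in which the separating functional is allowed to vary. Assuming $Q$ is not inner semicontinuous, one gets $t^k \to t^*$ and a ball $B_\epsilon(\xi,\lambda)$ around a point of $Q(t^*)$ with $Q(t^k) \cap B_\epsilon(\xi,\lambda) = \emptyset$; after normalizing $(\xi,\lambda) = 0$, separation of the convex sets $Q(t^k)$ and $B_\epsilon(0)$ yields $(\xi^k,\lambda^k)$ with $\tilde{h}_f(t^k,\xi^k,\lambda^k) = \sigma_{Q(t^k)}(\xi^k,\lambda^k) \leq -1 \leq -\epsilon\|(\xi^k,\lambda^k)\|$. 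The separators are therefore bounded by $\epsilon^{-1}$, a subsequence converges to some $(\xi^*,\lambda^*)$, and the \emph{joint lower semicontinuity of the perspective function} $\tilde{h}_f$ (Lemma~\ref{lem:perspective_lsc} --- this is where the local boundedness of $f(\cdot,0)$ actually enters) gives $\tilde{h}_f(t^*,\xi^*,\lambda^*) \leq -1$, contradicting $\tilde{h}_f(t^*,\cdot,\cdot) \geq 0$, which follows from $0 \in Q(t^*)$. In short: the correct tool is semicontinuity of the support function $\sigma_{Q(t)} = \tilde{h}_f(t,\cdot,\cdot)$ along varying directions, not pointwise upper semicontinuity of $f^*(\cdot,\xi)$, and your plan would need to be rebuilt around that idea.
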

\begin{proof}
        We begin by showing that lower semicontinuity of~$f$ implies inner semicontinuity of $Q$.
        Hence, we assume that $f$ is lower semicontinuous and, for the sake of contradiction, that $Q$ is not inner semicontinuous.
        Consequently, there exists an open set $W \subset \R^{m+1}$ such that $Q^{-1}(W)$ is not open, i.e., $W$ contains a boundary point $t^*$.
        Without loss of generality, we can assume that $W = B_{\epsilon}(\xi, \lambda)$ for some $(\xi, \lambda) \in Q(t^*)$ and $\epsilon > 0$.
        Since $t^*$ is a boundary point of $B_{\epsilon}(\xi, \lambda)$, there exists a sequence $t^k \to t^*$ in $V$ with $Q(t^k) \cap B_{\epsilon}(\xi, \lambda) = \emptyset$.
        After subtracting the linear and lower semicontinuous function $\zeta \mapsto \langle \xi, \zeta \rangle + \lambda$ from $f$, we can assume that $(\xi, \lambda) = 0 \in Q(t^*)$.
        By the above definition of $(t^k)_k$, we have that the convex sets $Q(t^k)$ and $B_{\epsilon}(0)$ are disjoint for every $k \in \N$, so that the hyperplane separation theorem can be applied to these two sets, i.e., there exist $(\xi^k, \lambda^k)$ with
        \begin{equation}
                \sup_{(\xi, \lambda) \in Q(t^k)} \langle (\xi^k, \lambda^k), (\xi, \lambda) \rangle \leq -1 \leq \inf_{(\xi, \lambda) \in B_{\epsilon}(0)} \langle (\xi^k, \lambda^k), (\xi, \lambda) \rangle.
                \label{eq:hyperplane_separation}
        \end{equation}
        Recalling the fact that the support function of a set of the form of $Q(t^k)$ is a perspective function (see p.~\pageref{eq:perspective_fct}), we see that the left hand side of~\eqref{eq:hyperplane_separation} is equal to $\tilde{h}_f(t^k, \xi^k, \lambda^k)$.
        For the right hand side, we compute
        \begin{equation}
                \inf_{(\xi, \lambda) \in B_{\epsilon}(0)} \langle (\xi^k, \lambda^k), (\xi, \lambda) \rangle = - \epsilon \sup_{(\xi, \lambda) \in B_1(0)} \langle (\xi^k, \lambda^k), (\xi, \lambda) \rangle = - \epsilon \| (\xi^k, \lambda^k) \|.
        \end{equation}
        Therefore, $\| (\xi^k, \lambda^k) \| \leq \epsilon^{-1}$ holds for every $k$, so that $(\xi^k, \lambda^k) \to  (\xi^*, \lambda^*)$ for some subsequence.
        Furthermore, we have by Lemma~\ref{lem:perspective_lsc} that $\tilde{h}_f$ is lower semicontinuous under the above assumptions on $f$.
        This yields
        \begin{equation}
                \tilde{h}_f(t^*, \xi^*, \lambda^*) \leq \liminf_{k \to \infty} h(t^k, \xi^k, \lambda^k) \leq -1.
                \label{eq:Q_isc_eq1}
        \end{equation}
        However, as stated above, $0 \in Q(t^*)$, so that $\tilde{h}_f(t^*, \xi^*, \lambda^*) \geq 0$ holds by the support function representation of $\tilde{h}_f$.
        Clearly, this is a contradiction to~\eqref{eq:Q_isc_eq1}.
        
        Next, we show that inner semicontinuity of~$Q$ implies lower semicontinuity of~$f$.
        To this end, fix $t^* \in V$, $\zeta \in \R^m$ and $M < f(t^*, \zeta^*)$.
        Our goal is to show the existence of $\delta > 0$ with $f(t, \zeta) \geq M$ for all $t \in B_{\delta}(t^*)$ and $\zeta \in B_{\delta}(\zeta)$.
        Choose $\epsilon > 0$ such that $M + 2 \epsilon < f(t^*, \zeta^*)$.
        Then, the point $(\zeta^*, M + 2 \epsilon) \in \R^m \times \R$ has a positive distance from the (epi-)graph of the function $f(t^*, \cdot)$ and, since $f(t^*, \cdot)$ is convex, there exists an affine linear function that separates $(\zeta^*, M + 2 \epsilon)$ from that (epi-)graph.
        As $Q(t^*)$ consists of the slope/intercept-parameter pairs of all affine linear functions underneath $f(t^*, \cdot)$ it contains, in particular, a parameter pair $(\xi^*, \lambda^*)$ with
        \begin{equation}
                \langle \xi^*, \zeta^* \rangle + \lambda^* \geq M + 2 \epsilon .
                \label{eq:Q_isc_eq2}
        \end{equation}
        Consider the following open neighborhood of $(\xi^*, \lambda^*)$:
        \begin{equation}
                W := \{ (\xi, \lambda) \in \R^{m+1} : \| \xi - \xi^* \| + | \lambda - \lambda^* | < \epsilon / \max\{ 1, \| \zeta^* \| \} \}.
                \label{eq:Q_isc_eq3}
        \end{equation}
        By the inner semicontinuity of $Q$, $Q^{-1}(W)$ is open and, as $(\xi^*, \lambda^*) \in Q(t^*)$, it contains an open neighborhood $B_{\delta'}(t^*)$ of $t^*$ for some $\delta' > 0$.
        Furthermore, one has~${Q(t) \cap W \neq \emptyset}$ for all $t \in B_{\delta'}(t^*)$ by the definition of the preimage, which means we can assign such a point $(\xi(t), \lambda(t)) \in Q(t) \cap W$ to each $t \in B_{\delta'}(t^*)$.
        This implies
        \begin{equation}
                f(t, \zeta) \geq \langle \xi(t), \zeta \rangle + \lambda(t) \quad \forall \zeta \in \R^m.
        \end{equation}
        We continue by computing
        \begin{align}
                f(t, \zeta) &\geq \langle \xi(t), \zeta \rangle + \lambda(t)\\
                &= \langle \xi(t), \zeta - \zeta^* \rangle + \langle \xi(t) - \xi^*, \zeta^* \rangle + (\lambda(t) - \lambda^*) + \langle \xi^*, \zeta^* \rangle + \lambda^*\\
                &\geq \langle \xi(t), \zeta - \zeta^* \rangle + \langle \xi(t) - \xi^*, \zeta^* \rangle + (\lambda(t) - \lambda^*) + M + 2 \epsilon\\
                &\geq \langle \xi(t), \zeta - \zeta^* \rangle - \| \xi(t) - \xi^* \| \| \zeta^* \| - | \lambda(t) - \lambda^* | + M + 2 \epsilon
        \end{align}
        by \eqref{eq:Q_isc_eq2} and the Cauchy-Schwarz inequality, respectively.
        Due to $(\xi(t), \lambda(t)) \in W$, we further obtain
        \begin{align}
                f(t, \zeta) &\geq M + \epsilon + \langle \xi(t), \zeta - \zeta^* \rangle\\
                &\geq M + \epsilon - \| \xi(t) \| \|\zeta - \zeta^* \|
        \end{align}
        and, as $\| \xi(t) \|$ can be bounded from below again by~\eqref{eq:Q_isc_eq3},
        \begin{equation}
                f(t, \zeta) \geq M + \epsilon - (\| \zeta^* \| + \epsilon) \| \zeta - \zeta^* \|.
        \end{equation}
        At last, we define $\delta := \min\{ \delta', \epsilon / (\| \xi^* \| + \epsilon) \}$ -- implying $(\| \xi^* \| + \epsilon) \| \zeta - \zeta^* \| \leq \epsilon$ for all $\zeta \in B_{\delta}(\zeta^*)$ -- and conclude $f(t, \zeta) \geq M$ for all $t \in B_{\delta}(t^*)$ and $\zeta \in B_{\delta}(\zeta^*)$, which proves the lower semicontinuity of $f$ at $(t^*, \zeta^*)$. \qed
\end{proof}

As discussed above, Proposition~\ref{prop:Q_isc_f_lsc} implies the validity of Theorem~\ref{thm:perkkio_integral_representation} for $Q$ as defined in \eqref{eq:Q_f}.
The facts that the set $C(Q)$ of continuous selections of $Q$ equals the constraint set $\mathcal{K}_f$ from Definition~\ref{def:generalized_bb} and that the support function~$\sigma_{Q(x)}$ from~\eqref{eq:perkkio_integral_representation} equals the (pointwise) perspective function~$\tilde{h}_f$ prove Theorem~\ref{thm:bb_integral_representation} as a special case of Theorem~\ref{thm:perkkio_integral_representation}. 

\subsection{Lifting the Augmented Functional}
\label{subsec:lifting_augmented}

As our goal is to use $\mathfrak{B}_f$ to represent the augmented functional $\tilde{F}(u, p)$ equivalently through variables that are measures, we will now establish their exact connection through Theorem~\ref{thm:bb_integral_representation}.
Analogously to the manner, in which $\tilde{F}$ depends on the two variables $u$ and $p$, we consider in this section split measures~${\nu = (E, \mu) \in \mathcal{M}(U, \R^m \times \R)}$ as arguments of $\mathfrak{B}_f$.
We begin by presenting a refinement of Theorem~\ref{thm:bb_integral_representation} to this case.
\begin{prop}
        Suppose that the assumptions of Theorem~\ref{thm:bb_integral_representation} hold.
        Furthermore, let $\nu = (E, \mu) \in \mathcal{M}(V, \R^m \times \R)$ and let the \textup{Lebesgue decomposition} of $E$ with respect to $|\mu|$ be given by $E = v |\mu| + v^{\bot} |\mu|^{\bot}$ where~$|\mu|^{\bot} \geq 0$ is the part of $E$ that is singular with respect to $|\mu|$ and where $v \in L^1_{|\mu|}(V, \R^m)$, $v^{\bot} \in L_{|\mu|^{\bot}}(V, \R^m)$ are the respective densities.
        Then, it holds that
        \begin{equation}
                \mathfrak{B}_f((E, \mu)) = \begin{cases}
                \int_V f(t, v(t)) \, |\mu|(\mathrm{d} t) + \int_V f^{\infty}(t, v^{\bot}(t)) \, |\mu|^{\bot}(\mathrm{d} t), & \mu \geq 0\\
                +\infty, & \text{otherwise},
                \end{cases}
                \label{eq:bb_E_mu}
        \end{equation}
        where $f^{\infty}(t, \xi) := \lim_{\rho \searrow 0} \rho f(t, \xi / \rho)$.
\end{prop}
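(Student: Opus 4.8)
The plan is to derive the proposition directly from the integral representation of Theorem~\ref{thm:bb_integral_representation} by performing a change of reference measure from $|\nu|$ to $|\mu|$. First I would record the elementary bookkeeping fact that, writing $\nu = (p^{\xi}, p^{\lambda})\,|\nu|$ for the Radon--Nikodym density of $\nu = (E,\mu)$ with respect to $|\nu|$, the componentwise identities $E = p^{\xi}|\nu|$ and $\mu = p^{\lambda}|\nu|$ hold. In particular, $\mu \geq 0$ is equivalent to $p^{\lambda} \geq 0$ holding $|\nu|$-almost everywhere. Theorem~\ref{thm:bb_integral_representation} then gives $\mathfrak{B}_f(\nu) = \int_V \tilde{h}_f(t, p^{\xi}(t), p^{\lambda}(t))\,|\nu|(\d t)$, which is the object to be unpacked.

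For the ``otherwise'' case, suppose $\mu$ is not nonnegative, so that $\mu(A) < 0$ for some Borel set $A$; this forces $p^{\lambda} < 0$ on a set of strictly positive $|\nu|$-measure. On that set $\tilde{h}_f(t, p^{\xi}, p^{\lambda}) = +\infty$ by Definition~\ref{def:pointwise_perspective}, while $\tilde{h}_f \geq 0$ everywhere (from the nonnegativity of $f$), so the integral, and hence $\mathfrak{B}_f(\nu)$, equals $+\infty$. This disposes of the second branch.

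The substance lies in the case $\mu \geq 0$, where $|\mu| = \mu = p^{\lambda}|\nu|$. I would split $V$ (modulo $|\nu|$-null sets) into $V_+ := \{ p^{\lambda} > 0 \}$ and $V_0 := \{ p^{\lambda} = 0 \}$ and read off the Lebesgue decomposition of $E$ with respect to $|\mu|$ from this split. On $V_+$ one has $|\nu|\restrict V_+ = (p^{\lambda})^{-1}|\mu|\restrict V_+$, hence $E\restrict V_+ = (p^{\xi}/p^{\lambda})\,|\mu|$, exhibiting $V_+$ as carrying the absolutely continuous part with density $v = p^{\xi}/p^{\lambda}$; on $V_0$ one has $|\mu|(V_0) = \int_{V_0} p^{\lambda}\,\d|\nu| = 0$, so $E\restrict V_0$ is supported on a $|\mu|$-null set and is exactly the singular part $v^{\bot}|\mu|^{\bot}$. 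It then remains to evaluate the two integrals separately. On $V_+$, using $\tilde{h}_f(t,\xi,\lambda) = \lambda f(t,\xi/\lambda)$ and $\d\mu = p^{\lambda}\,\d|\nu|$, I obtain $\int_{V_+}\tilde{h}_f(t,p^{\xi},p^{\lambda})\,\d|\nu| = \int_{V_+} f(t, p^{\xi}/p^{\lambda})\,\d\mu = \int_V f(t,v)\,\d|\mu|$, where the last equality uses $|\mu|(V_0) = 0$. On $V_0$, $\tilde{h}_f(t,p^{\xi},0) = f^{\infty}(t,p^{\xi})$ by Definition~\ref{def:pointwise_perspective}, so $\int_{V_0}\tilde{h}_f\,\d|\nu| = \int_{V_0} f^{\infty}(t,p^{\xi})\,\d|\nu|$; summing the two contributions yields the asserted formula.

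The step demanding the most care is the singular term: I must verify that $\int_{V_0} f^{\infty}(t,p^{\xi})\,\d|\nu|$ equals $\int_V f^{\infty}(t,v^{\bot})\,\d|\mu|^{\bot}$ irrespective of how the singular part is factored as $E\restrict V_0 = v^{\bot}|\mu|^{\bot}$. This is where I would invoke the positive $1$-homogeneity of $\xi \mapsto f^{\infty}(t,\xi)$, immediate from $f^{\infty}(t,c\xi) = \lim_{\rho \searrow 0}\rho f(t, c\xi/\rho) = c\,f^{\infty}(t,\xi)$ for $c > 0$: passing to a common dominating measure for any two nonnegative-measure representations, homogeneity lets me shift the scalar densities in and out of $f^{\infty}$, showing the integral is representation-independent and in particular agrees with the value obtained from the choice $v^{\bot} = p^{\xi}$, $|\mu|^{\bot} = |\nu|\restrict V_0$. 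The remaining measurability and integrability checks are routine once Lemma~\ref{lem:perspective_lsc} guarantees that $\tilde{h}_f$, and hence $f^{\infty} = \tilde{h}_f(\cdot,\cdot,0)$, is lower semicontinuous and therefore measurable.
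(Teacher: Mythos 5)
Your proof is correct, and it rests on the same two pillars as the paper's own argument: the integral representation of Theorem~\ref{thm:bb_integral_representation} combined with the $+\infty$ branch of $\tilde{h}_f$ to dispose of the case where $\mu$ fails to be nonnegative, and positive one-homogeneity of the perspective function to trade the reference measure $|\nu|$ for $|\mu|$ and $|\mu|^{\bot}$. Where you genuinely differ is in how the Lebesgue decomposition is located inside the integral. The paper partitions $V$ into the supports $S = \supp|\mu|$ and $S^{\bot} = \supp|\mu|^{\bot}$, applies the chain rule for Radon--Nikodym densities on each piece, and assumes ``without loss of generality'' that $\supp E = S \cup S^{\bot}$; this is slightly delicate, since supports of mutually singular measures may overlap and $E$ need not a priori be concentrated on their union. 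You instead partition by the level sets $V_+ = \{ p^{\lambda} > 0 \}$ and $V_0 = \{ p^{\lambda} = 0 \}$ of the density of $\mu$, which exhibits $E \restrict V_+$ as the absolutely continuous part (with $v = p^{\xi}/p^{\lambda}$) and $E \restrict V_0$ as the singular part directly, with no WLOG needed. You also add a step the paper skips: verifying that the singular integral $\int_V f^{\infty}(t, v^{\bot}(t)) \, |\mu|^{\bot}(\d t)$ is independent of how the singular part is factored into a density and a nonnegative measure, via one-homogeneity of $f^{\infty}$ and a common dominating measure (noting $f^{\infty}(t,0)=0$, which uses the local boundedness of $t \mapsto f(t,0)$). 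The paper's version is shorter; yours is tighter on exactly the two points where the paper's bookkeeping is loose, at the modest cost of that extra representation-independence argument.
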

\begin{proof}
        By Theorem~\ref{thm:bb_integral_representation}, we have
        \begin{equation}
                \mathfrak{B}_f((E, \mu)) = \int_V \tilde{h}_f \left( t, p^{\xi}(t), p^{\lambda}(t) \right) \, |\nu|(\mathrm{d} t)
        \end{equation}
        for the Radon-Nikodym densities $p^{\xi} = \mathrm{d} E / \mathrm{d} |\nu|$ and $p^{\lambda} = \mathrm{d} \mu / \mathrm{d} |\nu|$.
        From the definition of $\tilde{h}_f$ in \eqref{eq:pointwise_perspective}, we deduce $\mathfrak{B}_f((E, \mu)) = +\infty$ in case that a Borel set~${A \subset V}$ with $|\nu|(A) > 0$ and $\mu(A) < 0$ exists.
        
        Otherwise, one can use the fact that the pointwise perspective function $\tilde{h}_f$ is one-homogeneous in the last two arguments:
        Since $|\mu| = \mu$ and since the density~${\mathrm{d} |\mu| / \mathrm{d} |\nu|}$ trivially exists on $S := \supp |\mu|$, one has
        \begin{align}
                \int_{S} \tilde{h}_f \left( t, \frac{\mathrm{d} E}{\mathrm{d} |\nu|} (t), \frac{\mathrm{d} \mu}{\mathrm{d} |\nu|} (t) \right) \, |\nu|(\mathrm{d} t) &=
                \int_{S} \tilde{h}_f \left( t, \frac{\mathrm{d} E}{\mathrm{d} |\mu|} (t), 1 \right) \frac{\mathrm{d} |\mu|}{\mathrm{d} |\nu|} (t) \, |\nu|(\mathrm{d} t) \\
                 &= \int_{S} f(t, v(t)) \, |\mu|(\mathrm{d} t) \label{eq:int_supp_mu}
        \end{align}
        by the chain rule for Radon-Nikodym densities.
        On $S^{\bot} := \supp |\mu|^{\bot}$, on the other hand, one has a density $\mathrm{d} |\mu|^{\bot} / \mathrm{d} |\nu|$ with
        \begin{align}
                \int_{S^{\bot}} \tilde{h}_f \left( t, \frac{\mathrm{d} E}{\mathrm{d} |\nu|} (t), \frac{\mathrm{d} \mu}{\mathrm{d} |\nu|} (t) \right) \, |\nu|(\mathrm{d} t) &= 
                \int_{S^{\bot}} \tilde{h}_f \left( t, \frac{\mathrm{d} E}{\mathrm{d} |\mu|^{\bot}} (t), 0 \right) \frac{\mathrm{d} |\mu|^{\bot}}{\mathrm{d} |\nu|} (t) \, |\nu|(\mathrm{d} t) \\
                 &= \int_{S^{\bot}} f^{\infty} (t, v^{\bot} (t)) \, |\mu|^{\bot}(\mathrm{d} t). \label{eq:int_supp_mu_bot}
        \end{align}
        Assuming without loss of generality that $\supp E = S \cup S^{\bot}$ allows one to conclude~\eqref{eq:bb_E_mu} from \eqref{eq:int_supp_mu} and \eqref{eq:int_supp_mu_bot}.\qed
\end{proof}

Our particular case of interest will be $E \ll \mu$, i.e., $E$ being absolutely continuous with respect to $\mu$ in the sense of $E(A) = 0$ for all Borel sets $A$ with~$\mu(A) = 0$. In this case the roles of $E$ in $\mathfrak{B}_f$ and $p$ in $\tilde{F}$ can then be linked in a meaningful sense.
As a first step, we deduce from \eqref{eq:bb_E_mu} that
\begin{equation}
        \mathfrak{B}_f((E, \mu)) = \int_V f(t, v(t)) \, \mu(\mathrm{d} t)
        \label{eq:bb_E_mu_ac}
\end{equation}
holds for $\mu \geq 0$ and $E \ll \mu$.
As a second step, we conclude the exact connection between $\mathfrak{B}_f((E, \mu))$ and $\tilde{F}(u, p)$. 
\begin{cor}
        For $U = \Omega \times \Gamma$ equipped with the usual assumptions and for measurable $u: \Omega \to \Gamma$, $p: \Omega \to \R^m$ with $\tilde{F}(u, p) < \infty$, the vectorial measure~$(E, \mu) \in \mathcal{M}(U, \R^m \times \R)$ with $\mu = \delta_u$ in the sense of~\eqref{eq:delta_u_def} and $E = p \delta_u$ in the sense of
        \begin{equation}
                \int_U \langle \phi, \mathrm{d} E \rangle := \int_{\Omega} \langle \phi(x, u(x)), p(x) \rangle \d x
        \end{equation}
        for all $\phi \in C_0(U, \R^m)$ satisfies
        \begin{equation}
                \mathfrak{B}_f((E, \mu)) = \int_{\Omega} f(x, u(x), p(x)) \d x = \tilde{F}(u, p).
                \label{eq:bb_lifting}
        \end{equation}
        \label{cor:bb_lifting}
\end{cor}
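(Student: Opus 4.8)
The plan is to recognize that the corollary is essentially a specialization of the already-established identity~\eqref{eq:bb_E_mu_ac}, which reads $\mathfrak{B}_f((E,\mu)) = \int_V f(t, v(t)) \, \d\mu$ whenever $\mu \geq 0$ and $E \ll \mu$, with $v = \d E / \d\mu$ the Radon--Nikodym density. The entire task therefore reduces to checking that the graph-concentrated pair $(E,\mu) = (p\delta_u, \delta_u)$ falls into this regime and to pinning down its density $v$ along the graph of $u$.

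First I would verify the two structural hypotheses. Nonnegativity of $\mu = \delta_u$ is immediate from~\eqref{eq:delta_u_def}, since $\int_U \phi \, \d\delta_u = \int_\Omega \phi(x,u(x)) \, \d x \geq 0$ for every $\phi \geq 0$; in particular $|\mu| = \mu$ and $|\mu|(U) = \mathcal{L}^d(\Omega) < \infty$. For the absolute continuity $E \ll \mu$, I would first record that the total variation of $E = p\delta_u$ is $|E| = |p|\delta_u$, i.e. $|E|(A) = \int_\Omega \mathbf{1}_A(x,u(x)) |p(x)| \, \d x$, finiteness of $|E|(U) = \int_\Omega |p| \, \d x$ being exactly what makes $E$ a genuine element of $\mathcal{M}(U, \R^m)$. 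If now $\mu(A) = \int_\Omega \mathbf{1}_A(x,u(x)) \, \d x = 0$, then $\mathbf{1}_A(x,u(x)) = 0$ for $\mathcal{L}^d$-a.e.\ $x$, whence $|E|(A) = 0$ and thus $E(A) = 0$; this gives $E \ll \mu$ and, by Radon--Nikodym, a density $v = \d E/\d\mu \in L^1_\mu(U, \R^m)$.

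Next I would identify $v$ explicitly. Unfolding $\d E = v \, \d\mu$ against an arbitrary $\phi \in C_0(U, \R^m)$ and using the two defining relations gives $\int_\Omega \langle \phi(x,u(x)), p(x) - v(x,u(x)) \rangle \, \d x = 0$. Specializing to test fields of the product form $\phi(x,z) = \psi(x) e_j$ with $\psi \in C_c(\Omega)$ (which lie in $C_c(U) \subset C_0(U, \R^m)$ precisely because $\Gamma$ is compact) and invoking the fundamental lemma of the calculus of variations coordinatewise yields $v(x,u(x)) = p(x)$ for $\mathcal{L}^d$-a.e.\ $x$. With the density thus determined along the graph, I would substitute into~\eqref{eq:bb_E_mu_ac} and convert the integral back to $\Omega$ via the graph-concentration formula, which extends from continuous to nonnegative measurable integrands by the standard pushforward identity $\int_U g \, \d\delta_u = \int_\Omega g(x,u(x)) \, \d x$: this gives $\mathfrak{B}_f((E,\mu)) = \int_U f(t, v(t)) \, \d\mu = \int_\Omega f(x, u(x), v(x,u(x))) \, \d x = \int_\Omega f(x, u(x), p(x)) \, \d x = \tilde{F}(u,p)$.

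The only genuinely delicate step is the density identification: I must ensure that the measurable function $v$ on $U$, a priori defined only $\mu$-a.e., restricts correctly to the graph so that ``$v(x, u(x))$'' is unambiguous, and that the class of admissible test fields $\phi$ is rich enough to force $v \circ (\mathrm{id}, u) = p$ $\mathcal{L}^d$-almost everywhere. The compactness of $\Gamma$ is what guarantees that $z$-independent test fields belong to $C_0(U, \R^m)$, so that the fundamental lemma applies; everything else is bookkeeping around the Radon--Nikodym theorem and the pushforward structure of $\delta_u$ under the injective graph map $x \mapsto (x, u(x))$.
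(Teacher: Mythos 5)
Your proof is correct and follows essentially the same route as the paper: there, Corollary~\ref{cor:bb_lifting} is stated as an immediate consequence of~\eqref{eq:bb_E_mu_ac} (the $E \ll \mu$, $\mu \geq 0$ specialization of~\eqref{eq:bb_E_mu}), with the verifications that $\mu = \delta_u \geq 0$, that $E = p\delta_u \ll \mu$, and that $\mathrm{d}E/\mathrm{d}\mu$ restricts to $p$ along the graph left implicit. Your write-up supplies exactly those omitted details (nonnegativity, absolute continuity via $|E| = |p|\delta_u$, density identification through $z$-independent test fields and the fundamental lemma, and the pushforward conversion back to $\Omega$), all correctly.
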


Equations~\eqref{eq:bb_E_mu_ac} and~\eqref{eq:bb_lifting} emphasize that absolute continuity in the above sense is central to the generalized Benamou-Brenier functional $\mathfrak{B}_f$ -- a fact that is reminiscent of the ``original'' Benamou-Brenier functional from~\cite[Ch.~5]{santambrogio2015}.
What is more, the generalized Benamou-Brenier functional even enforces absolute continuity under certain conditions as the following proposition shows.
\begin{prop}
        Let $V \subset \R^n$ be locally compact and let $f: V \times \R^m \to \overline{\R}$ be convex in the second argument with a continuous conjugate $f^*$ that satisfies
        \begin{equation}
                \sup_{t \in V, \| \xi \| < r} |f^*(t, \xi)| < \infty \quad \forall r > 0.
        \end{equation}
        Then, finiteness of $\mathfrak{B}_f((E, \mu))$ for $(E, \mu) \in \mathcal{M}(V, \R^m \times \R)$ implies $E \ll \mu$.
        \label{prop:enforced_ac}
\end{prop}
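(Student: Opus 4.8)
The plan is to reduce the claim to the behaviour of the recession function $f^{\infty}$ that governs the singular part in the representation formula~\eqref{eq:bb_E_mu}. Writing the Lebesgue decomposition $E = v\,|\mu| + v^{\bot}\,|\mu|^{\bot}$ as in the preceding proposition, I would first invoke~\eqref{eq:bb_E_mu}: finiteness of $\mathfrak{B}_f((E,\mu))$ forces $\mu \geq 0$ (so that $|\mu| = \mu$) and, in particular, finiteness of the singular contribution
\[
        \int_V f^{\infty}(t, v^{\bot}(t)) \, |\mu|^{\bot}(\mathrm{d} t) < \infty .
\]
Since $|\mu|^{\bot}$ is by construction singular with respect to $\mu$, the assertion $E \ll \mu$ is equivalent to the vanishing of the singular part $v^{\bot}\,|\mu|^{\bot}$. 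The goal is thus to show that the integrand above is $+\infty$ wherever $v^{\bot}(t) \neq 0$, so that finiteness of the integral forces $v^{\bot} = 0$ almost everywhere with respect to $|\mu|^{\bot}$.

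The heart of the argument is the pointwise identity, for each fixed $t$,
\[
        f^{\infty}(t, \xi) = \begin{cases} 0, & \xi = 0, \\ +\infty, & \xi \neq 0, \end{cases}
\]
which is exactly where the growth hypothesis on $f^*$ enters. Using that $f(t,\cdot)$ coincides with the biconjugate of the continuous conjugate $f^*(t,\cdot)$ (hence $f(t,\cdot)$ is convex and lower semicontinuous), I would rewrite the defining limit as
\[
        \rho\, f(t, \xi / \rho) = \sup_{\eta \in \R^m} \big( \langle \xi, \eta \rangle - \rho\, f^*(t, \eta) \big).
\]
For $\xi \neq 0$ I would restrict the supremum to the single choice $\eta = R\,\xi / \| \xi \|$ with $R > 0$ arbitrary. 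Setting $C_R := \sup_{t \in V,\, \| \eta \| < R+1} |f^*(t,\eta)|$, which is finite by assumption, the right-hand side is then bounded below by $R\|\xi\| - \rho\,C_R$; letting $\rho \searrow 0$ and afterwards $R \to \infty$ yields $f^{\infty}(t, \xi) = +\infty$. Equivalently, the growth assumption shows $\mathrm{dom}\, f^*(t,\cdot) = \R^m$, and $f^{\infty}(t,\cdot)$ is the support function of this effective domain, which is $+\infty$ off the origin.

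Combining the two steps, finiteness of $\int_V f^{\infty}(t, v^{\bot}(t))\,|\mu|^{\bot}(\mathrm{d} t)$ forces $v^{\bot}(t) = 0$ for $|\mu|^{\bot}$-almost every $t$, so the singular part $v^{\bot}\,|\mu|^{\bot}$ is the zero measure and $E = v\,\mu \ll \mu$. I expect the main obstacle to be the rigorous handling of the recession identity: namely, justifying the representation $\rho f(t,\xi/\rho) = \sup_{\eta}(\langle \xi,\eta\rangle - \rho f^*(t,\eta))$ and controlling the supremum uniformly enough to conclude $f^{\infty} = +\infty$ off the origin. Once this pointwise statement is secured, the measure-theoretic conclusion is immediate from~\eqref{eq:bb_E_mu}.
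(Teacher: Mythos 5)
Your proposal takes a genuinely different route from the paper's. The paper never touches the integral representation: it first shows (Lemma~\ref{lem:ess_bd_slct}, via Lusin's theorem, with the growth bound on $f^*$ controlling the correction term) that the supremum defining $\mathfrak{B}_f$ may equivalently be taken over the \emph{essentially bounded measurable} selections $\mathcal{K}_f^{|\nu|}$, and then, given a Borel set $A$ with $\mu(A)=0$ but $E(A)\neq 0$, inserts the test fields $\psi_k^{\xi} = (k E(A))\mathds{1}_A$, $\psi_k^{\lambda} = -f^*(\cdot,\psi_k^{\xi})\mathds{1}_A$, obtaining $\mathfrak{B}_f((E,\mu)) \geq k\|E(A)\|^2 \to \infty$. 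You instead go through the representation \eqref{eq:bb_E_mu} and kill the singular part by showing $f^{\infty}(t,\cdot)\equiv+\infty$ off the origin. That pointwise step is correct, and in fact simpler than you make it: by Fenchel--Young, $\rho f(t,\xi/\rho) \geq \langle \xi,\eta\rangle - \rho f^*(t,\eta)$ for \emph{every fixed} $\eta$, and since $f^*(t,\eta)$ is a finite number, letting $\rho\searrow 0$ gives $f^{\infty}(t,\xi)\geq\langle\xi,\eta\rangle$; taking $\sup_{\eta}$ yields $+\infty$ for $\xi\neq 0$. So you need neither the biconjugation identity $f=f^{**}$ nor the uniform-in-$t$ bound $C_R$ for this step -- pointwise finiteness of $f^*$ suffices -- whereas the paper uses the uniform bound precisely in the Lusin argument.

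The one genuine issue is that the representation \eqref{eq:bb_E_mu} is not available under the hypotheses of Proposition~\ref{prop:enforced_ac}. The proposition containing \eqref{eq:bb_E_mu} inherits the assumptions of Theorem~\ref{thm:bb_integral_representation}: $f$ must be $[0,\infty]$-valued, lower semicontinuous in both variables, and $t\mapsto f(t,0)$ locally bounded. The statement you are proving assumes only convexity in the second argument plus continuity and the growth condition on $f^*$, and none of the three extra properties follows from these; for instance $f(t,p)=\delta_{\{a(t)\}}(p)$ with $a$ continuous, bounded and nonvanishing has $f^*(t,\xi)=\langle a(t),\xi\rangle$ satisfying all your hypotheses, yet $f(t,0)\equiv+\infty$, so Theorem~\ref{thm:bb_integral_representation} and hence \eqref{eq:bb_E_mu} cannot be invoked. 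As written, your proof therefore establishes the claim only for the smaller class of integrands covered by the representation theorem (which does include the paper's applications, where $f(t,0)=\rho(t)$ is bounded), while the paper's dual argument works under the stated hypotheses alone. To close the gap you would either have to add those hypotheses to the statement, or decouple the argument from \eqref{eq:bb_E_mu} -- which essentially forces you back to a direct duality construction of the paper's kind.
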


In order to prove the above Proposition, we first show the following Lemma.
\begin{lem}
        Under the assumptions of Proposition~\ref{prop:enforced_ac}, one has
        \begin{equation}
                \mathfrak{B}_f(\nu) = \sup_{\phi \in \mathcal{K}_f^{|\nu|}} \int_V \langle \phi, \mathrm{d} \nu \rangle,
        \end{equation}
        where $\nu \in \mathcal{M}(V, \R^{m+1})$ and where
        \begin{equation}
                \mathcal{K}_f^{|\nu|} := \{ (\phi^{\xi}, \phi^{\lambda}) \in L_{|\nu|}^{\infty}(V, \R^m \times \R) : f^*(t, \phi^{\xi}(t)) + \phi^{\lambda} \leq 0 \text{ for a.e. } t \in V \}
        \end{equation}
        is the set of \textup{essentially bounded measurable selections} (as opposed to the set $\mathcal{K}_f$ of continuous selections used in Definition~\ref{def:generalized_bb}).
        \label{lem:ess_bd_slct}
\end{lem}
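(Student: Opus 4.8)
The plan is to prove the two inequalities separately. The inequality ``$\leq$'' is immediate: every $\phi \in \mathcal{K}_f$ is continuous, hence bounded and, as an element of $C_0(V,\R^{m+1})$, contained in $L_{|\nu|}^{\infty}(V, \R^m\times\R)$, and it satisfies the defining constraint at every point, so in particular $|\nu|$-almost everywhere. Thus $\mathcal{K}_f \subseteq \mathcal{K}_f^{|\nu|}$ and $\mathfrak{B}_f(\nu) = \sup_{\phi\in\mathcal{K}_f}\langle\nu,\phi\rangle \leq \sup_{\phi\in\mathcal{K}_f^{|\nu|}}\int_V\langle\phi,\mathrm{d}\nu\rangle$. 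The whole content of the lemma therefore lies in the reverse inequality, i.e.\ in showing that an essentially bounded measurable selection can be approximated, in the value of its pairing with $\nu$, by genuinely continuous selections from $\mathcal{K}_f$.

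To this end I would fix $\phi=(\phi^\xi,\phi^\lambda)\in\mathcal{K}_f^{|\nu|}$ and $\epsilon>0$, and recall that $|\nu|$ is a finite Radon measure on the locally compact, second-countable Hausdorff space $V$, so that Lusin's theorem applies. After discarding the $|\nu|$-null set on which the constraint fails, it yields a compact set $K\subseteq V$ with $|\nu|(V\setminus K)$ arbitrarily small on which $\phi$ is continuous and $f^*(t,\phi^\xi(t))+\phi^\lambda(t)\leq 0$ holds everywhere. I would then extend $\phi^\xi|_K$ by the Tietze--Urysohn theorem to a compactly supported field $\psi^\xi\in C_c(V,\R^m)$ with $\|\psi^\xi\|_\infty \leq \|\phi^\xi\|_{L^\infty_{|\nu|}}=:r$, agreeing with $\phi^\xi$ on $K$, whose support is contained in that of a fixed Urysohn cutoff $\chi\in C_c(V)$ with $0\leq\chi\leq 1$ and $\chi\equiv1$ on $K$.

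The decisive step is the construction of the companion field $\psi^\lambda$ so that the pair lands in $\mathcal{K}_f$. Exploiting the continuity of $f^*$, I would set
\[
\psi^\lambda(t) := \min\bigl\{\chi(t)\,\tilde\phi^\lambda(t),\ -f^*(t,\psi^\xi(t))\bigr\},
\]
where $\tilde\phi^\lambda\in C_c(V)$ is a Tietze extension of $\phi^\lambda|_K$. By joint continuity of $f^*$ this $\psi^\lambda$ is continuous, it satisfies $f^*(t,\psi^\xi(t))+\psi^\lambda(t)\leq 0$ everywhere by construction, and it is bounded since $\sup_{t\in V,\|\xi\|\leq r}|f^*(t,\xi)|<\infty$ by hypothesis. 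On $K$ one has $\chi\equiv1$, $\psi^\xi=\phi^\xi$ and $-f^*(t,\phi^\xi(t))\geq\phi^\lambda(t)$, so $\psi^\lambda=\phi^\lambda$ there; outside $\supp\chi$ one has $\psi^\xi=0$, hence $-f^*(t,0)\geq0$ (using $f\geq 0$, i.e.\ $f^*(t,0)\leq0$) while $\chi\tilde\phi^\lambda=0$, giving $\psi^\lambda=0$. Thus $\psi=(\psi^\xi,\psi^\lambda)\in C_c(V,\R^{m+1})\subseteq C_0$ is a genuine element of $\mathcal{K}_f$ coinciding with $\phi$ on $K$. Since $\psi$ and $\phi$ are uniformly bounded by a constant $R=R(r)$, the pairings differ only over $V\setminus K$, so that $\bigl|\int_V\langle\psi,\mathrm{d}\nu\rangle-\int_V\langle\phi,\mathrm{d}\nu\rangle\bigr|\leq 2R\,|\nu|(V\setminus K)\leq\epsilon$ for $K$ large enough; hence $\int_V\langle\phi,\mathrm{d}\nu\rangle\leq\mathfrak{B}_f(\nu)+\epsilon$, and letting $\epsilon\to0$ and taking the supremum over $\phi$ closes the argument.

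The main obstacle, and the place where the hypotheses of Proposition~\ref{prop:enforced_ac} are truly needed, is reconciling three competing demands on the constructed selection: it must satisfy the pointwise constraint everywhere (not merely on $K$), it must decay so as to lie in $C_0$, and it must still reproduce $\phi$ on $K$. The min-truncation resolves the first and third, continuity of $f^*$ guarantees continuity of the truncated field, and the nonnegativity of $f$ (hence $f^*(t,0)\leq 0$) is exactly what lets $\psi^\lambda$ vanish off a compact set without violating the constraint, securing membership in $C_0$. As an alternative route one could instead invoke the integral representation of Theorem~\ref{thm:bb_integral_representation}: since $\tilde{h}_f(t,\cdot,\cdot)$ is the support function of $Q(t)$ from~\eqref{eq:Q_f}, any $\phi\in\mathcal{K}_f^{|\nu|}$ satisfies $\langle\phi(t),(p^\xi(t),p^\lambda(t))\rangle\leq\tilde{h}_f(t,p^\xi(t),p^\lambda(t))$ for $|\nu|$-a.e.\ $t$, and integrating against $|\nu|$ gives $\int_V\langle\phi,\mathrm{d}\nu\rangle\leq\mathfrak{B}_f(\nu)$ at once; I nonetheless prefer the explicit approximation here, as it does not require re-verifying the nonnegativity and local-boundedness hypotheses of that theorem under the present assumptions.
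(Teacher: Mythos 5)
Your proof is correct and takes essentially the same route as the paper's: both reduce the claim to showing $\mathfrak{B}_f(\nu) \geq \int_V \langle \phi, \d\nu\rangle$ for each $\phi \in \mathcal{K}_f^{|\nu|}$, approximate $\phi$ by a continuous, compactly supported field via Lusin's theorem (the paper invokes its strong form with the norm-preserving continuous extension built in, where you unwind it explicitly with Tietze/Urysohn), restore feasibility of the $\lambda$-component by the identical min-truncation $\min\{\cdot,\,-f^*(t,\hat\phi^{\xi}(t))\}$, and bound the resulting error by $2M\,|\nu|(V\setminus A) \leq \epsilon$ using the assumed uniform bound on $f^*$. The only cosmetic differences are your explicit treatment of the cutoff/decay at infinity (which the paper leaves implicit, relying as you do on $f^*(t,0)\leq 0$) and your side remark on deriving the inequality from Theorem~\ref{thm:bb_integral_representation}, neither of which changes the argument.
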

\begin{proof}
        Since $\mathcal{K}_f \subset \mathcal{K}_f^{|\nu|}$, it suffices to show that
        \begin{equation}
                \mathfrak{B}_f(\nu) \geq \int_V \langle \phi, \mathrm{d} \nu \rangle \quad \forall \phi \in \mathcal{K}_f^{|\nu|}.
                \label{eq:bb_geq_bd_slct}
        \end{equation}
        In order to do so, we fix arbitrary $\phi = (\phi^{\xi}, \phi^{\lambda}) \in \mathcal{K}_f^{|\nu|}$ as well as $\epsilon > 0$ and construct~$\tilde{\phi} = (\tilde{\phi}^{\xi}, \tilde{\phi}^{\lambda}) \in \mathcal{K}_f$ with
        \begin{equation}
                \left\| \int_V \left( \phi - \tilde{\phi} \right) \d |\nu| \right\| \leq \epsilon,
        \end{equation}
        so that \eqref{eq:bb_geq_bd_slct} follows from the definition of $\mathfrak{B}_f$ as the supremum over $\mathcal{K}_f$.
        
        We begin by defining
        \begin{equation}
                M_f := \sup_{t \in V, \|\xi\| < \|\phi\|_{\infty}} |f^*(t, \xi)| < \infty
        \end{equation}
        and $M := \max\{M_f, \|\phi\|_{\infty}\}$.
        According to Lusin's theorem, there exist a closed set $A \subset V$ with $|\nu|(V \setminus A) \leq \epsilon / (2 M)$ as well as continuous, compactly supported functions $(\hat{\phi}^{\xi}, \hat{\phi}^{\lambda})$ that agree with $(\phi^{\xi}, \phi^{\lambda})$ on $A$ and satisfy both $\| \hat{\phi}^{\xi} \|_{\infty} \leq \| \phi^{\xi} \|_{\infty}$ and $\| \hat{\phi}^{\lambda} \|_{\infty} \leq \| \phi^{\lambda} \|_{\infty}$.
        If we now define
        \begin{align}
                \tilde{\phi}^{\xi}(t) &:= \hat{\phi}^{\xi}(t),\\
                \tilde{\phi}^{\lambda}(t) &:= \min \left\{ \hat{\phi}^{\lambda}(t), -f^*\left(t, \hat{\phi}^{\xi}(t)\right) \right\},
        \end{align}
        then $\| \tilde{\phi} \|_{\infty} \leq M$ and $\tilde{\phi}^{\lambda}$ is continuous with $\phi^{\lambda}(t) = \tilde{\phi}^{\lambda}(t)$ on~$A$, so that $\tilde{\phi} \in \mathcal{K}_f$.
        Therefore, one has
        \begin{align}
                \left\| \int_V \left( \phi - \tilde{\phi} \right) \d |\nu| \right\| &= \left\| \int_{V \setminus A} \left( \phi - \tilde{\phi} \right) \d |\nu| \right\| \\
                &\leq |\nu|(V \setminus A) \| \phi - \tilde{\phi} \|_{\infty} \\
                &\leq |\nu|(V \setminus A) \left( \| \phi \|_{\infty} + \| \tilde{\phi} \|_{\infty} \right) \\
                &\leq |\nu|(V \setminus A) (2M) \leq \epsilon
        \end{align}
        due to $|\nu|(V \setminus A) \leq \epsilon / (2 M)$. \qed
\end{proof}

We now proceed with the proof of Proposition~\ref{prop:enforced_ac}.
\begin{proof}[of Proposition~\ref{prop:enforced_ac}]
        For $\nu = (E, \mu) \in \mathcal{M}(V, \R^{m} \times \R)$, assume there exists a measurable set $A$ with $\mu(A) = 0$ and $E(A) \neq 0$.
        Let $k \in \mathbb{N}$ be arbitrary and define $\psi_k^{\xi} := (k E(A)) \mathds{1}_A(x)$ and $\psi_k^{\lambda} := -f^*(x, \psi_k^{\xi}(x)) \mathds{1}_A(x)$.
        
        Clearly, $(\psi_k^{\xi}, \psi_k^{\lambda}) \in \mathcal{K}_f^{|\nu|}$ and thus Lemma~\ref{lem:ess_bd_slct} implies
        \begin{align}
                \mathfrak{B}_f((E, \mu)) &= \sup_{(\phi^{\xi}, \phi^{\lambda}) \in \mathcal{K}_f^{|\nu|}} \int_V \langle \phi^{\xi}, \mathrm{d} E \rangle + \int_V \phi^{\lambda} \d \mu \\
                &\geq \int_V \langle \psi_k^{\xi}, \mathrm{d} E \rangle + \int_V \psi_k^{\lambda} \d \mu
                = k \| E(A) \|^2.
        \end{align}
        As $k$ was arbitrary, this yields $\mathfrak{B}_f((E, \mu)) = +\infty$. \qed
\end{proof}

While Proposition~\ref{prop:enforced_ac} covers a large number of integrands $f$, we emphasize that it still exempts an important class, namely those that are 1-homogeneous in the last argument.
Since the conjugation of a 1-homogeneous function always yields an indicator function (cf. \cite[Ex.~11.4]{rockafellar2004}), $f^*$ is neither continuous nor bounded.
As a concrete example, consider $f(t, p) = \| p \|$ with $f^*(t, \xi) = \delta_{ \{ \| \cdot \| \leq 1 \} }(\xi)$.
This yields
\begin{equation}
        \mathfrak{B}_f ((E, \mu)) = \sup_{\| \phi^{\xi} \|_{\infty} \leq 1} \int_V \langle \phi^{\xi}, \mathrm{d} E \rangle = \|E\|(V) < \infty
\end{equation}
independently of $\mu \in \mathcal{M}(V, \R)$, so that no conclusion about absolute continuity is allowed in this case.

\subsection{Lifting the Augmentation Constraint}
\label{subsec:constraint_lifting}

The results of the previous section imply the equivalence
\begin{equation}
        \inf_{u \in \mathcal{U}} \tilde{F}(u, p) \ \text{s.t.} \ p = Lu \quad = \quad \inf_{u \in \mathcal{U}} \mathfrak{B}_f((p \delta_u, \delta_u)) \ \text{s.t.} \ p = Lu
        \label{eq:bb_lifting_equiv}
\end{equation}
between the minimization of the nonconvex augmented functional $\tilde{F}$ and the convex Benamou-Brenier functional $\mathfrak{B}_f$.
Note that the lifted right-hand side problem cannot be phrased easily without the unlifted variables $u$ and $p$ due to the augmentation constraint $p = L u$.
Since we pursue convex relaxations that solely work on measure-valued variables, we propose in this section necessary conditions for such constraints in terms of the variables $\mu = \delta_u$ and $E = p \delta_u$.

A key observation is that the form of these conditions has to depend on the linear differential operator~$L$.
We will present conditions for the three cases of~${L = \nabla}$, $L = \nabla^2$ and $L = \Delta$.

\subsubsection{First-Order Condition}
This section covers the case of $L = \nabla$, i.e., $p = \nabla u$, and therefore $\R^m = \R^{d, s}$ in the definition of the integrand~$f$.
Given a pair of measures~${(E, \mu) \in \mathcal{M}(U, \R^{d, s} \times \R)}$ with $\mu = \delta_u$ for a differentiable function $u$, we will show that in order for $E$ to ``represent'' the gradient $\nabla u$, the \textit{first-order continuity equation}
\begin{equation}
        \nabla_x \mu + \ddiv_z E = 0
        \label{eq:1st_order_ce}
\end{equation}
has to be satisfied in the following sense:

\begin{defn}
        A pair of measures $(E, \mu) \in \mathcal{M}(U, \R^{d, s} \times \R)$ is said to satisfy the \textup{first-order continuity equation} \eqref{eq:1st_order_ce} if 
        \begin{equation}
                \int_U \ddiv_x \phi \d \mu + \int_U \langle \nabla_z \phi^{\top}, \mathrm{d} E \rangle = 0
                \label{eq:1st_order_ce_def}
        \end{equation}
        holds for all differentiable test functions $\phi$ with compact support in the first variable\footnote{We denote derivatives with respect to variables from~$\Omega$ by $x$ and derivatives with respect to variables from~$\Gamma$ by $z$.} as denoted by $C_c^1(\Omega \times \Gamma, \R^d)$.
        \label{def:1st_order_ce}
\end{defn}

The precise statement of the alluded necessary condition reads:

\begin{prop}
        Let $u \in C^1(\Omega, \Gamma)$ and $(E, \mu) \in \mathcal{M}(U, \R^{d, s} \times \R)$ with $\mu = \delta_u$ and $E = \nabla u \delta_u$ in the sense of
        \begin{equation}
                \int_U \langle \phi, \mathrm{d} E \rangle = \int_{\Omega} \langle \phi(x, u(x)), \nabla u(x) \rangle \d x
                \label{eq:E_nabla_u_def}
        \end{equation}
        for all $\phi \in C_0(\Omega \times \Gamma, \R^{d, s})$, then $(E, \mu)$ solves the first-order continuity equation as described in Definition~\ref{def:1st_order_ce}.
        \label{prop:1st_order_ce_nc}
\end{prop}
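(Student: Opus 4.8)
The plan is to unfold the two defining relations, $\mu = \delta_u$ from~\eqref{eq:delta_u_def} and $E = \nabla u\,\delta_u$ from~\eqref{eq:E_nabla_u_def}, so that both integrals in the weak continuity equation~\eqref{eq:1st_order_ce_def} collapse into ordinary Lebesgue integrals over $\Omega$, and then to recognize their sum as the integral of a total $x$-divergence that vanishes by the divergence theorem. Concretely, since $\ddiv_x \phi \in C_c^1(\Omega \times \Gamma)$ is scalar-valued and $\nabla_z \phi^\top \in C_0(\Omega \times \Gamma, \R^{d,s})$ is a valid matrix-valued test field for $E$ (both being continuous with compact support in $x$), substituting the definitions gives
\begin{equation*}
        \int_U \ddiv_x \phi \d \mu + \int_U \langle \nabla_z \phi^\top, \mathrm{d} E \rangle
        = \int_{\Omega} (\ddiv_x \phi)(x, u(x)) \dx + \int_{\Omega} \langle (\nabla_z \phi^\top)(x, u(x)), \nabla u(x) \rangle \dx .
\end{equation*}
It then remains to show that the right-hand side is zero.

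The key step is the chain rule applied to the composed vector field $\Phi(x) := \phi(x, u(x))$, which is $C^1$ on $\Omega$ because $u \in C^1(\Omega, \Gamma)$ and $\phi \in C_c^1$. Componentwise one has $\partial_{x_i} \Phi_j(x) = (\partial_{x_i} \phi_j)(x, u(x)) + \sum_{k=1}^s (\partial_{z_k} \phi_j)(x, u(x))\, \partial_{x_i} u_k(x)$; setting $j = i$ and summing over $i$ yields
\begin{equation*}
        \ddiv_x \Phi(x) = (\ddiv_x \phi)(x, u(x)) + \sum_{i=1}^d \sum_{k=1}^s (\partial_{z_k} \phi_i)(x, u(x))\, \partial_{x_i} u_k(x),
\end{equation*}
and the double sum is precisely $\langle (\nabla_z \phi^\top)(x, u(x)), \nabla u(x) \rangle$ in the index convention under which $E$ takes values in $\R^{d,s}$. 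Hence the integrand of the right-hand side above is exactly $\ddiv_x \Phi(x)$.

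Finally, $\Phi$ inherits compact support in $\Omega$ from $\phi$ (if $\phi(\cdot, z)$ vanishes outside a compact $K \subset \Omega$ for every $z$, then $\Phi$ vanishes outside $K$), so $\Phi \in C_c^1(\Omega, \R^d)$ and the divergence theorem gives $\int_{\Omega} \ddiv_x \Phi(x) \dx = 0$, which establishes~\eqref{eq:1st_order_ce_def}. I expect the only real friction to be bookkeeping: pinning down the matrix/transpose convention so that the contraction term coming from the chain rule matches $\langle \nabla_z \phi^\top, \nabla u \rangle$ exactly, and verifying that $C^1$-regularity of $u$ together with the compact support of $\phi$ suffices to legitimately invoke Gauss's theorem on the composed field. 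Both are routine once the conventions are fixed, so no genuine analytic obstacle arises.
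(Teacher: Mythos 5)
Your proposal is correct and follows essentially the same route as the paper's proof: both rest on the chain rule identity $\ddiv_x[\phi(x,u(x))] = (\ddiv_x \phi)(x,u(x)) + \langle (\nabla_z \phi^{\top})(x,u(x)), \nabla u(x)\rangle$ together with the divergence theorem applied to the composed field, which vanishes thanks to the compact support of $\phi$ in the spatial variable. The only cosmetic difference is that you combine both lifted integrals first and exhibit their sum as a total divergence, whereas the paper starts from $\int_U \ddiv_x \phi \d\mu$ alone and splits it into the total-divergence part and the contraction term identified with $-\int_U \langle \nabla_z\phi^{\top}, \mathrm{d}E\rangle$; the two are the same argument read in opposite directions.
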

\begin{proof}
        Consider an arbitrary test function $\phi \in C_c^1(\Omega \times \Gamma, \R^d)$.
        Applying the chain rule yields
        \begin{equation}
                \ddiv_x[\phi(x, u(x))] = \ddiv_x \phi(x, u(x)) + \langle \nabla_z \phi(x, u(x))^{\top}, \nabla u(x) \rangle,
        \end{equation}
        where $\ddiv_x[\phi(x, u(x))]$ and $\ddiv_x \phi(x, u(x))$ refer to total and partial differentials with respect to $x$, respectively.
        This implies
        \begin{align}
                \int_U \ddiv_x \phi \d \mu &= \int_{\Omega} \ddiv_x \phi(x, u(x)) \d x \\
                &= \int_{\Omega} \ddiv_x[\phi(x, u(x))] \d x - \int_{\Omega} \langle \nabla_z \phi(x, u(x))^{\top}, \nabla u(x) \rangle \d x,
        \end{align}
        where the first summand evidently vanishes due to the divergence theorem and the compact support of $\phi$.
        The latter term equals $\int_U \langle \nabla_z \phi^{\top}, \mathrm{d} E \rangle$ through\eqref{eq:E_nabla_u_def}, so that the overall equation can be reordered to yield~\eqref{eq:1st_order_ce}.  \qed
\end{proof}

In fact, the necessary condition from Proposition~\ref{prop:1st_order_ce_nc} can be extended to a sufficient condition under suitable circumstances.

\begin{prop}
        For $u \in C^1(\Omega, \Gamma)$, $\mu = \delta_u$ and $E \in \mathcal{M}(U, \R^{d, s})$ with $E \ll \mu$, the continuity equation~\eqref{eq:1st_order_ce_def} for $(E, \mu)$ implies $E = \nabla u \delta_u$ almost everywhere on the support of $\mu$.
        \label{prop:1st_order_ce_sc}
\end{prop}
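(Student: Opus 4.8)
The plan is to represent $E$ by its Radon--Nikodym density with respect to $\mu$, reduce the weak continuity equation to an identity over $\Omega$ along the graph of $u$, and then read off the pointwise equality by a fundamental-lemma argument with explicitly constructed test fields.

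First, since $E \ll \mu$ with $\mu = \delta_u \geq 0$ a finite Radon measure, the Radon--Nikodym theorem yields a density $v \in L^1_\mu(U, \R^{d,s})$ with $E = v\mu$. Setting $w(x) := v(x, u(x))$ and recalling that $\delta_u$ is the pushforward of the Lebesgue measure on $\Omega$ under $x\mapsto (x,u(x))$, one has $\int_\Omega |w(x)|\,dx = |E|(U) < \infty$, so $w \in L^1(\Omega, \R^{d,s})$. Substituting $\mu = \delta_u$ and $E = v\delta_u$ into the weak first-order continuity equation \eqref{eq:1st_order_ce_def} and using the definition \eqref{eq:delta_u_def} of $\delta_u$ turns the constraint into
\[
\int_\Omega \ddiv_x \phi(x,u(x))\, dx + \int_\Omega \langle \nabla_z\phi(x,u(x))^\top, w(x)\rangle\, dx = 0
\]
for every $\phi \in C_c^1(\Omega\times\Gamma, \R^d)$.

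Next I would eliminate the divergence term by comparison with the genuine gradient. By Proposition~\ref{prop:1st_order_ce_nc}, the pair $(\nabla u\,\delta_u, \delta_u)$ satisfies the same continuity equation, so the first summand above equals $-\int_\Omega \langle \nabla_z\phi(x,u(x))^\top, \nabla u(x)\rangle\, dx$; equivalently this is the chain-rule-plus-divergence-theorem identity already established in the proof of Proposition~\ref{prop:1st_order_ce_nc}. Subtracting leaves the orthogonality relation
\[
\int_\Omega \langle \nabla_z\phi(x,u(x))^\top,\; w(x) - \nabla u(x)\rangle\, dx = 0 \qquad \forall\, \phi\in C_c^1(\Omega\times\Gamma,\R^d).
\]

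The crux --- and the step I expect to require the most care --- is that the test fields act on $w - \nabla u$ only through the values of $\nabla_z\phi$ \emph{along the thin graph} $\{(x,u(x))\}$, so one must verify that these restricted $z$-gradients are rich enough to separate $w-\nabla u$ from $0$. I would resolve this constructively: for an arbitrary matrix field $b \in C_c^\infty(\Omega, \R^{d,s})$ put
\[
\phi_i(x,z) := \sum_{k=1}^s b_{ik}(x)\, z_k, \qquad i = 1,\dots,d.
\]
Then $\phi \in C_c^1(\Omega\times\Gamma,\R^d)$ (its support in the first variable is inherited from $b$, while compactness of $\Gamma$ makes the $z$-support automatic), and $\partial_{z_l}\phi_i \equiv b_{il}$, hence $\nabla_z\phi(x,u(x)) = b(x)$. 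Inserting this into the orthogonality relation gives $\int_\Omega \langle b(x), w(x) - \nabla u(x)\rangle\, dx = 0$ for all $b \in C_c^\infty(\Omega,\R^{d,s})$, so the fundamental lemma of the calculus of variations forces $w = \nabla u$ Lebesgue-a.e.\ on $\Omega$. Translating back via $\mu = \delta_u$, this reads $v = \nabla u$ $\mu$-a.e., i.e.\ $E = \nabla u\,\delta_u$ almost everywhere on $\supp\mu$, as claimed.
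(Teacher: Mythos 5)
Your proposal is correct and follows essentially the same route as the paper's own proof: represent $E$ by its Radon--Nikodym density with respect to $\mu$, subtract the continuity equation satisfied by $(\nabla u\,\delta_u,\delta_u)$ (Proposition~\ref{prop:1st_order_ce_nc}) to obtain the orthogonality relation, and then test with fields linear in $z$, namely $\phi(x,z)=b(x)z$, which is exactly the paper's choice $\hat{\phi}(x,z):=\psi(x)z$. The only differences are cosmetic (you spell out the pushforward bookkeeping $w(x)=v(x,u(x))\in L^1(\Omega)$ and the remark that compactness of $\Gamma$ makes the $z$-support automatic, which the paper leaves implicit).
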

\begin{proof}
        Let $E = \nabla u \delta_u$ and let $\tilde{E} \in \mathcal{M}(U, \R^{d, s})$ with $\tilde{E} \ll \mu$ and $\tilde{E} = w \mu$ with~${w \in L_{\mu}^1(U, \R^{d, s})}$ solve the continuity equation~\eqref{eq:1st_order_ce_def} for $(\tilde{E}, \mu)$.
        This implies
        \begin{equation}
                \int_U \langle \nabla_z \phi^{\top}, w - \nabla u \rangle \d \mu = 0 \quad \forall \phi \in C_c^1(U, \R^d)
                \label{eq:1st_order_ce_sc_eq1}
        \end{equation}
        as $(E, \mu)$ solves~\eqref{eq:1st_order_ce_def} as well by Proposition~\ref{prop:1st_order_ce_nc}.
        Consider now $\psi \in C_c^1(\Omega, \R^{d, s})$ and construct from it $\hat{\phi}(x, z) := \psi(x) z$.
        As $\hat{\phi} \in C_c^1(U, \R^d)$ with $\nabla_z \hat{\phi}^{\top} = \psi$, one has
        \begin{equation}
                \int_{\Omega} \langle \psi(x), w(x, u(x)) - \nabla u(x) \rangle \d \mu = 0
        \end{equation}
        from \eqref{eq:1st_order_ce_sc_eq1}.
        Therefore, $w = \nabla u$ holds almost everywhere on the support of $\mu$. \qed
\end{proof}

The results of this section allow to define a lifted version $\mathcal{F}$ of the original problem $\inf_{u \in C^1(\Omega, \Gamma)} \int_{\Omega} f(x, u(x), \nabla u(x)) \d x$ over $\mathcal{M}(\Omega \times \Gamma)$ -- or, respectively, suitable subsets thereof -- as 
\begin{equation}
        \mathcal{F}(\mu) := \inf \{ \mathfrak{B}_f((E, \mu)) : (E, \mu) \text{ solves } \eqref{eq:1st_order_ce_def} \}.
        \label{eq:1st_order_lifting}
\end{equation}
Taken together, the results of Corollary~\ref{cor:bb_lifting} and Propositions~\ref{prop:enforced_ac},~\ref{prop:1st_order_ce_nc},~\ref{prop:1st_order_ce_sc} imply the central result
\begin{equation}
        \mathcal{F}(\delta_u) = F(u) := \int_{\Omega} f(x, u(x), \nabla u(x)) \d x
        \label{eq:1st_order_equiv}
\end{equation}
for $u \in C^1(\Omega, \Gamma)$ with $F(u) < \infty$, given that all relevant assumptions hold.

At last, a convex lifting model can be obtained by choosing the admissible set for~\eqref{eq:1st_order_lifting} as the set of weakly measurable functions~$L_w^{\infty}(\Omega, \mathcal{P}(\Gamma))$ which was introduced in Section~\ref{subsec:preliminaries}.
The same reasoning applies to the second-order models~\eqref{eq:2nd_order_lifting} and~\eqref{eq:laplacian_lifting} that we describe in following two subsections. 

\subsubsection{Second-Order Condition}
This section will cover the case of $L = \nabla^2$ in a similar fashion to the previously discussed first-order case.
We regard the Hessian of $u \in C^2(\Omega, \Gamma)$ as a third-order tensor, i.e., $\R^m = \R^{d, d, s}$, as one obtains a $d \times d$-Hessian matrix $\nabla^2 u_k$ for every component $k = 1, \ldots, s$.

Analogously to the first-order continuity equation, we introduce a \textit{second-order continuity equation} which needs to be satisfied in order for $E$ to ``represent'' the Hessian $\nabla^2 u$.
This equation reads
\begin{equation}
        - \nabla_x^2 \mu - \ddiv_z E + \ddiv_z^2 H = 0
        \label{eq:2nd_order_ce}
\end{equation}
for an additional auxiliary variable $H \in \mathcal{M}(U, \R^{d, s, d, s})$ ``corresponding'' to the tensor product $\nabla u \otimes \nabla u$.
As before, we define solutions of \eqref{eq:2nd_order_ce} in a weak sense:

\begin{defn}
        A triple $(H, E, \mu) \in \mathcal{M}(U, \R^{d, s, d, s} \times \R^{d, d, s} \times \R)$ is said to be a solution of the \textup{second-order continuity equation}~\eqref{eq:2nd_order_ce} if
        \begin{equation}
%               - \int_U \ddiv_x^2 \phi \d \mu + \int_U \langle \nabla_z \phi^{ \top^{(2, 3, 1)} }, \mathrm{d} E \rangle + \int_U \langle \nabla_z^2 \phi^{ \top^{(3, 2, 4, 1)} }, \mathrm{d} H \rangle = 0
                - \int_U \ddiv_x^2 \phi \d \mu + \int_U \langle \nabla_z \phi, \mathrm{d} E \rangle + \int_U \langle \nabla_z^2 \phi, \mathrm{d} H \rangle = 0
        \end{equation}
        for all $\phi \in C_c^2(U, \R^{d, d})$ holds\footnote{In order to avoid notational overhead, we omit a denotation of the correct ``transposition'' of the third- and fourth-order tensors $\nabla_z \phi$ and $\nabla_z^2 \phi$ and simply assume them to be ordered in the appropriate format.}.
%       The notations $\nabla_z \phi^{ \top^{(2, 3, 1)} }$ and $\nabla_z^2 \phi^{ \top^{(3, 2, 4, 1)} }$ thereby refer to permutations of the dimensions of $\nabla_z \phi \in \R^{s, d, d}$ and $\nabla_z^2 \phi \in \R^{s, s, d, d}$ into~${\R^{d, d, s}}$ and~${\R^{d, s, d, s}}$ according to the given permutations.
        \label{def:2nd_order_ce}
\end{defn}

The analogous necessary condition to the one introduced in Proposition~\ref{prop:1st_order_ce_nc} for the first-order case is as follows:

\begin{prop}
        Let $u \in C^2(\Omega, \Gamma)$, $(H, E, \mu) \in \mathcal{M}(U, \R^{d, s, d, s} \times \R^{d, d, s} \times \R)$ with $\mu = \delta_u$, $E = (\nabla^2 u) \delta_u$ and $H = (\nabla u \otimes \nabla u) \delta_u$ in the sense of
        \begin{equation}
                \int_U \langle \phi, \mathrm{d} H \rangle := \int_{\Omega} \langle \phi(x, u(x)), (\nabla u \otimes \nabla u) (x) \rangle \d x
        \end{equation}
        for all $\phi \in C_0(U, \R^{d, s, d, s})$.
        Then, $(H, E, \mu)$ solves the second-order continuity equation as given in Definition~\ref{def:2nd_order_ce}.
        \label{prop:2nd_order_ce_nc}
\end{prop}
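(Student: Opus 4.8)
The plan is to mimic the strategy of Proposition~\ref{prop:1st_order_ce_nc}, but the second-order case demands a genuinely more careful argument: a naive expansion of the total second divergence of $x \mapsto \phi(x, u(x))$ does \emph{not} reproduce the stated equation, since it leaves behind mixed $x$--$z$ derivative terms and carries the wrong sign on the $\ddiv_x^2 \phi$ contribution. Instead, I would start from the two terms that involve $E$ and $H$ and reduce them directly. Fixing $\phi \in C_c^2(U, \R^{d,d})$ and inserting $\mu = \delta_u$, $E = (\nabla^2 u)\delta_u$, $H = (\nabla u \otimes \nabla u)\delta_u$ via their defining relations gives
\begin{align*}
        \int_U \langle \nabla_z \phi, \mathrm{d} E \rangle &= \sum_{i,j,k} \int_{\Omega} (\partial_{z_k} \phi_{ij})(x, u(x)) \, \partial_{x_i x_j} u_k \d x,\\
        \int_U \langle \nabla_z^2 \phi, \mathrm{d} H \rangle &= \sum_{i,j,k,m} \int_{\Omega} (\partial_{z_m z_k} \phi_{ij})(x, u(x)) \, \partial_{x_i} u_m \, \partial_{x_j} u_k \d x,
\end{align*}
where the derivatives of $u$ are evaluated at $x$ and, as in the footnote, index orderings are chosen to match the tensor contractions.

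The first step I would carry out is to integrate the $E$-term by parts in $x_i$, writing $\partial_{x_i x_j} u_k = \partial_{x_i}(\partial_{x_j} u_k)$; the boundary terms vanish because $\phi$, hence $\partial_{z_k} \phi$, is compactly supported in $x$. Applying the chain rule to $\partial_{x_i}[(\partial_{z_k}\phi_{ij})(x, u(x))]$ yields two contributions, and the one carrying a $\partial_{z_m z_k}\phi_{ij}$ factor is exactly the negative of the $H$-term. Thus $E$ and $H$ are not independent: they collapse into the single mixed expression
\[
        \int_U \langle \nabla_z \phi, \mathrm{d} E \rangle + \int_U \langle \nabla_z^2 \phi, \mathrm{d} H \rangle = - \sum_{i,j,k} \int_{\Omega} (\partial_{x_i z_k} \phi_{ij})(x, u(x)) \, \partial_{x_j} u_k \d x.
\]

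The second step is to recognize, again by the chain rule, that $\sum_k (\partial_{x_i z_k}\phi_{ij})(x,u(x)) \, \partial_{x_j} u_k$ equals the total derivative $\partial_{x_j}[(\partial_{x_i}\phi_{ij})(x, u(x))]$ minus the partial derivative $(\partial_{x_i x_j}\phi_{ij})(x, u(x))$. The total-derivative part integrates to zero over $\Omega$ by the divergence theorem and compact support, leaving precisely $\int_{\Omega} (\ddiv_x^2 \phi)(x, u(x)) \d x = \int_U \ddiv_x^2 \phi \d \mu$. Combining the two steps gives $\int_U \langle \nabla_z \phi, \mathrm{d} E \rangle + \int_U \langle \nabla_z^2 \phi, \mathrm{d} H \rangle = \int_U \ddiv_x^2 \phi \d \mu$, which rearranges to the weak second-order continuity equation of Definition~\ref{def:2nd_order_ce}.

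The main obstacle is exactly this coupling between $E$ and $H$: unlike the first-order case, neither term can be treated in isolation, and the argument hinges on the observation that integrating the Hessian term by parts automatically generates the quadratic gradient term $H$ with opposite sign, so that the auxiliary variable $H$ is forced upon us as a necessary by-product rather than being an independent choice. This is precisely why a plain ``total second divergence vanishes'' argument fails and why the more delicate \emph{two-step} integration by parts is required. Beyond this conceptual point, the remaining work is routine bookkeeping of the third- and fourth-order tensor indices and the repeated use of $u \in C^2(\Omega, \Gamma)$ to invoke Schwarz symmetry of $\nabla^2 u$ and to justify both integrations by parts.
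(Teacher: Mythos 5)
Your proof is correct, but note that it runs in the opposite direction to the paper's, and that you slightly mischaracterize the alternative: the paper's proof \emph{is} the ``total second divergence vanishes'' argument you dismiss, and it does not fail. There, $\ddiv_x^2[\phi(x,u(x))]$ is expanded fully via the chain and product rules; the leftover mixed $x$--$z$ terms (organized through the symmetrization $\phi_{i,j}+\phi_{j,i}$) are then rewritten as total derivatives and eliminated by two further integrations by parts, and it is exactly this repair step that flips the sign of the $\ddiv_x^2\phi$ contribution and yields the weak equation of Definition~\ref{def:2nd_order_ce}. You instead start from the $E$- and $H$-pairings: a single integration by parts of the $E$-term in $x_i$ produces, via the chain rule, the $H$-term with opposite sign (so the two collapse), and a second chain-rule/divergence-theorem step converts the remaining mixed term into $\int_U \ddiv_x^2\phi \,\d\mu$. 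Both routes rest on the same ingredients (chain rule, divergence theorem, compact support of $\phi$ in the first variable, $u\in C^2$), but your decomposition is leaner: it avoids the full four-group expansion and the $\phi_{i,j}+\phi_{j,i}$ bookkeeping, and it makes conceptually explicit why the auxiliary variable $H$ is forced on the model --- it is precisely the chain-rule correction generated when a derivative is moved off $\nabla^2 u$. What the paper's organization buys in exchange is a visible parallel to the first-order proof of Proposition~\ref{prop:1st_order_ce_nc}, whose anchor identity (integral of a total divergence of a compactly supported field vanishes) reappears unchanged. Two minor inaccuracies in your closing remarks: Schwarz symmetry of $\nabla^2 u$ is never actually used (you only commute mixed partials of $\phi$, legitimate since $\phi\in C_c^2$), and $u\in C^2$ enters solely to make $\partial_{x_j}u_k$ of class $C^1$ so that both integrations by parts are justified.
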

\begin{proof}
        Let $h : \R^d \to \R^d \times \R^s$ denote the mapping of $x \in \R^d$ onto $(x, u(x))$.
        For given $\phi \in C_c^2(U, \R^{d, d})$, we will consider the expression~${\ddiv_x^2 \phi \circ h \big\vert_{x_0}}$ that is explained by
        \begin{equation}
                \left. \ddiv_x
                \begin{pmatrix}
                        \ddiv_x \phi_1 \circ h \\
                        \vdots \\
                        \ddiv_x \phi_d \circ h \\
                \end{pmatrix} \right\vert_{x_0}
                \quad \text{for} \quad
                \phi =
                \begin{pmatrix}
                        \phi_1 \\
                        \vdots \\
                        \phi_d
                \end{pmatrix} =
                \begin{pmatrix}
                \phi_{1, 1} &\ldots &\phi_{1, d} \\
                \vdots & & \vdots \\
                \phi_{d, 1} &\ldots &\phi_{d, d}
                \end{pmatrix}.
                \label{eq:2nd_order_ce_nc_eq1}
        \end{equation}
        For each $i = 1, \ldots, d$, one has
        \begin{align}
                \ddiv_x \phi_i \circ h \big\vert_x &= \sum_{j=1}^d \partial_{x_j} \phi_{i, j} \circ h \big\vert_x = \sum_{j=1}^d \langle \nabla \phi_{i, j} \big\vert_{h(x)}, \partial_{x_j} h \big\vert_x \rangle\\
                &= \sum_{j=1}^d \partial_{x_j} \phi_{i, j} \big\vert_{h(x)} + \sum_{k=1}^s \partial_{z_k} \phi_{i, j} \big\vert_{h(x)} \partial_{x_j} u_k \big\vert_x, \label{eq:2nd_order_ce_nc_eq2}
        \end{align}
        where the first summand of \eqref{eq:2nd_order_ce_nc_eq2} equals $\ddiv_x \phi_i \big\vert_{h(x)}$.
        Inserting this intermediate result into \eqref{eq:2nd_order_ce_nc_eq1} yields
        \begin{equation}
                \ddiv_x^2 \phi \circ h \big\vert_{x_0} =
                \sum_{i=1}^d \underbrace{\partial_{x_i} (\ddiv_x \phi_i) \circ h \big\vert_{x_0}}_{(A)} +
                \sum_{i,j=1}^d \sum_{k=1}^s \underbrace{\partial_{x_i} ((\partial_{z_k} \phi_{i,j}) \circ h) (\partial_{x_j} u_k) \big\vert_{x_0}}_{(B)}.
        \end{equation}
        The term $(A)$ can further be evaluated to
        \begin{align}
                (A) &= \langle \nabla (\ddiv_x \phi_i) \big\vert_{h(x_0)}, \partial_{x_i} h \big\vert_{x_0} \rangle \\
                &= \partial_{x_i} (\ddiv_x \phi_i) \big\vert_{h(x_0)} +
                \sum_{l=1}^s \partial_{z_l} (\ddiv_x \phi_i) \big\vert_{h(x_0)} \partial_{x_i} u_l \big\vert_{x_0} \\
                &= \sum_{i=1}^d \partial_{x_i} \partial_{x_j} \phi_{i, j} \big\vert_{h(x_0)} +
                \sum_{j=1}^d \sum_{l=1}^s \partial_{z_l} \partial_{x_j} \phi_{i, j} \big\vert_{h(x_0)} \partial_{x_i} u_l \big\vert_{x_0}.
        \end{align}
        For $(B)$, the product rule has to be applied, so that
        \begin{equation}
                (B) = \underbrace{\partial_{x_i} (\partial_{z_k} \phi_{i, j}) \circ h \big\vert_{x_0}}_{(C)} \partial_{x_j} u_k \big\vert_{x_0}
                + (\partial_{z_k} \phi_{i, j}) \circ h \big\vert_{x_0} \partial_{x_i} \partial_{x_j} u_k \big\vert_{x_0}.
        \end{equation}
        The expression $(C)$ equals
        \begin{align}
                (C) &= \langle \nabla (\partial_{z_k} \phi_{i, j}) \big\vert_{h(x_0)}, \partial_{x_i} h \big\vert_{x_0} \rangle \\
                &= \partial_{x_i} \partial_{z_k} \phi_{i, j} \big\vert_{h(x_0)} + \sum_{l=1}^s \partial_{z_l} \partial_{z_k} \phi_{i, j} \big\vert_{h(x_0)} \partial_{x_i} u_l\big\vert_{x_0},
        \end{align}
        so that one has
        \begin{align}
                \ddiv_x^2 \phi \circ h \big\vert_{x_0} &=
                \sum_{i,j=1}^d \partial_{x_i} \partial_{x_j} \phi_{i, j} \big\vert_{h(x_0)} \label{eq:2nd_order_ce_nc_eq3} \\
                &+ \sum_{i,j=1}^d \sum_{k=1}^s \partial_{x_j} \partial_{z_k} (\phi_{i, j} + \phi_{j, i}) \big\vert_{h(x_0)} \partial_{x_i} u_k \big\vert_{x_0} \label{eq:2nd_order_ce_nc_eq4} \\
                &+ \sum_{i,j=1}^d \sum_{k=1}^s \partial_{z_k} \phi_{i, j} \big\vert_{h(x_0)} \partial_{x_i} \partial_{x_j} u_k \big\vert_{x_0} \label{eq:2nd_order_ce_nc_eq5} \\
                &+ \sum_{i,j=1}^d \sum_{k, l=1}^s \partial_{z_l} \partial_{z_k} \phi_{i, j} \big\vert_{h(x_0)} \partial_{x_i} u_l \big\vert_{x_0} \partial_{x_j} u_k \big\vert_{x_0}. \label{eq:2nd_order_ce_nc_eq6}
        \end{align}
        Now, consider the summand~\eqref{eq:2nd_order_ce_nc_eq4}.
        Proceeding in a similar manner as above, one obtains the equivalence
        \begin{align}
                & \sum_{i,j=1}^d \sum_{k=1}^s \partial_{x_j} \partial_{z_k} (\phi_{i, j} + \phi_{j, i}) \big\vert_{h(x_0)} \partial_{x_i} u_k \big\vert_{x_0} \label{eq:2nd_order_ce_nc_eq7} \\
                =& \sum_{i,j=1}^d \sum_{k=1}^s \partial_{x_j} (\partial_{z_k} (\phi_{i, j} + \phi_{j, i}) \circ h) \big\vert_{x_0} \partial_{x_i} u_k \big\vert_{x_0} \label{eq:2nd_order_ce_nc_eq8} \\
                & -2 \sum_{i,j=1}^d \sum_{k, l=1}^s \partial_{z_l} \partial_{z_k} \phi_{i, j} \big\vert_{h(x_0)} \partial_{x_i} u_l \big\vert_{x_0} \partial_{x_j} u_k \big\vert_{x_0}. \label{eq:2nd_order_ce_nc_eq9}
        \end{align}
        Inserting \eqref{eq:2nd_order_ce_nc_eq7}--\eqref{eq:2nd_order_ce_nc_eq9} into \eqref{eq:2nd_order_ce_nc_eq3}--\eqref{eq:2nd_order_ce_nc_eq6} yields
        \begin{align}
                \ddiv_x^2 \phi \circ h \big\vert_{x_0} &=
                \sum_{i,j=1}^d \partial_{x_i} \partial_{x_j} \phi_{i, j} \big\vert_{h(x_0)} \label{eq:2nd_order_ce_nc_eq10} \\
                &+ \sum_{i,j=1}^d \sum_{k=1}^s \partial_{x_j} (\partial_{z_k} (\phi_{i, j} + \phi_{j, i}) \circ h) \big\vert_{x_0} \partial_{x_i} u_k \big\vert_{x_0} \label{eq:2nd_order_ce_nc_eq11} \\
                &+ \sum_{i,j=1}^d \sum_{k=1}^s \partial_{z_k} \phi_{i, j} \big\vert_{h(x_0)} \partial_{x_i} \partial_{x_j} u_k \big\vert_{x_0} \label{eq:2nd_order_ce_nc_eq12} \\
                &- \sum_{i,j=1}^d \sum_{k, l=1}^s \partial_{z_l} \partial_{z_k} \phi_{i, j} \big\vert_{h(x_0)} \partial_{x_i} u_l \big\vert_{x_0} \partial_{x_j} u_k \big\vert_{x_0}. \label{eq:2nd_order_ce_nc_eq13}
        \end{align}
        When integrating \eqref{eq:2nd_order_ce_nc_eq11} and \eqref{eq:2nd_order_ce_nc_eq12} over $\Omega$, one finds due to Gauss' theorem and due to the compact support of $\phi$ that
        \begin{align}
                &\sum_{i=1}^d \sum_{k=1}^s \int_{\Omega} \ddiv_x (\partial_{z_k} \phi_{i}) \circ h \big\vert_{x} \partial_{x_i} u_k \big\vert_{x} \d x \\
                + &\sum_{i=1}^d \sum_{k=1}^s \int_{\Omega} \left\langle (\partial_{z_k} \phi_{i}) \circ h \big\vert_{x} , \nabla (\partial_{x_j} u_k) \big\vert_{x} \right\rangle \d x \\
                = & \sum_{i=1}^d \sum_{k=1}^s \int_{\partial \Omega} \left\langle \left(\partial_{x_i} u_k \big\vert_{x}\right) \left((\partial_{z_k} \phi_{i}) \circ h \big\vert_{x}\right) , n(x) \right\rangle \d x = 0,
        \end{align}
        where $n$ is the outer normal of $\Omega$.
        For the remaining part of~\eqref{eq:2nd_order_ce_nc_eq11}, similar arguments produce
        \begin{align}
                &\sum_{i=1}^d \sum_{k=1}^s \int_{\Omega} \sum_{j=1}^d \partial_{x_j} ((\partial_{z_k} \phi_{j, i}) \circ h) \big\vert_{x} \partial_{x_i} u_k \big\vert_{x} \d x \\
                =  -&\sum_{i=1}^d \sum_{k=1}^s \int_{\Omega} \sum_{j=1}^d (\partial_{z_k} \phi_{i, j}) \circ h \big\vert_{x} \partial_{x_i} \partial_{x_j} u_k \big\vert_{x} \d x.
        \end{align}
        With this, the complete integral of \eqref{eq:2nd_order_ce_nc_eq10} -- \eqref{eq:2nd_order_ce_nc_eq13} over $\Omega$ evaluates to
        \begin{align}
                0 = &\int_{\Omega} \ddiv_x^2 \phi \circ h \big\vert_{x} \d x \\
                 = &\int_{\Omega} \ddiv_x^2 \phi \big\vert_{h(x)} \d x -\int_{\Omega} \left\langle \nabla_z \phi \big\vert_{h(x)}, \nabla^2 u \big\vert_{x} \right\rangle \d x \\
                 &-\int_{\Omega} \left\langle \nabla_z^2 \phi \big\vert_{h(x)}, \nabla u \otimes \nabla u \big\vert_{x} \right\rangle \d x
        \end{align}
        due to the compact support of $\phi$, which concludes the proof as the right hand side equals the definition of the second-order continuity equation in~\eqref{eq:2nd_order_ce}.  \qed
\end{proof}

As before, the necessary condition from Proposition~\ref{prop:2nd_order_ce_nc} can be extended to a sufficient one in the case of absolute continuity.

\begin{prop}
        For $u \in C^2(\Omega, \Gamma)$, let $(H, E, \mu) \in \mathcal{M}(U, \R^{d, s, d, s} \times \R^{d, d, s} \times \R)$ with $\mu = \delta_u$ solve the second-order continuity equation~\eqref{eq:2nd_order_ce}.
        Then, $E \ll \mu$ implies $E = (\nabla^2 u) \delta_u$ almost everywhere on the support of $\mu$.
        
        If, additionally, $H \ll \mu$ holds with $H = A \mu$ such that $[A(x)]_{i, \cdot, j, \cdot}$ is symmetric for all $i, j = 1, \ldots, d$ and $x \in U$, then $A = \nabla u \otimes \nabla u$ almost everywhere on the support of $\mu$.
        \label{prop:2nd_order_ce_sc}
\end{prop}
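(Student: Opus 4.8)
The plan is to mirror the proof of Proposition~\ref{prop:1st_order_ce_sc}, exploiting the linearity of the weak second-order continuity equation together with two families of specially structured test functions that decouple the contributions of $E$ and $H$. Throughout I would compare the given solution $(H, E, \mu)$ against the canonical one $(H_0, E_0, \mu)$ with $E_0 = (\nabla^2 u)\delta_u$ and $H_0 = (\nabla u\otimes\nabla u)\delta_u$, which solves \eqref{eq:2nd_order_ce} by Proposition~\ref{prop:2nd_order_ce_nc}. Since both solve the equation for the \emph{same} $\mu$, subtracting the two weak formulations cancels the $\ddiv_x^2\phi$-term and leaves
\[
\int_U\langle\nabla_z\phi,\d(E-E_0)\rangle + \int_U\langle\nabla_z^2\phi,\d(H-H_0)\rangle = 0\qquad\forall\,\phi\in C_c^2(U,\R^{d,d}).
\]

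For the first assertion I would insert test functions that are \emph{affine} in the range variable $z$, i.e.\ $\phi_{i,j}(x,z) = \langle \psi_{i,j}(x), z\rangle$ for $\psi_{i,j}\in C_c^2(\Omega,\R^s)$. Such $\phi$ satisfy $\nabla_z^2\phi \equiv 0$, so the $H$-contribution vanishes identically \emph{regardless of $H$}, while $\nabla_z\phi$ realizes an arbitrary $x$-dependent field $\psi\in C_c^2(\Omega,\R^{d,d,s})$. Writing $E = w\mu$ by virtue of $E\ll\mu$ and using that $\mu = \delta_u$ concentrates on the graph of $u$, the surviving identity reads $\int_\Omega\langle\psi(x), w(x,u(x)) - \nabla^2 u(x)\rangle\d x = 0$ for all admissible $\psi$. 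The fundamental lemma of the calculus of variations then forces $w(x,u(x)) = \nabla^2 u(x)$ for a.e.\ $x$, which is precisely $E = (\nabla^2 u)\delta_u$ $\mu$-almost everywhere. This step uses no hypothesis on $H$.

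Having identified $E = E_0$, the first integral in the subtracted equation vanishes for \emph{every} $\phi$ (both measures are absolutely continuous with respect to $\mu$ with coinciding densities), so I am left with $\int_U\langle\nabla_z^2\phi,\d(H-H_0)\rangle = 0$ for all $\phi$. Now I would probe with test functions that are \emph{quadratic} in $z$, say $\phi_{i,j}(x,z) = \tfrac12\langle z, C_{i,j}(x)z\rangle$ with symmetric $C_{i,j}(x)$; these produce $\nabla_z^2\phi = T(x)$ ranging over all $x$-dependent tensor fields that are symmetric in the two range indices. Writing $H = A\mu$ via $H\ll\mu$, the resulting identity $\int_\Omega\langle T(x), A(x,u(x)) - (\nabla u\otimes\nabla u)(x)\rangle\d x = 0$ and the fundamental lemma yield that the part of $A - \nabla u\otimes\nabla u$ that is \emph{symmetric in the range indices} vanishes $\mu$-a.e.\ on $\supp\mu$. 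The symmetry hypothesis that each slice $[A(x)]_{i,\cdot,j,\cdot}$ be symmetric is exactly what promotes this symmetric-part identity to the full conclusion $A = \nabla u\otimes\nabla u$.

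The main obstacle is precisely this last step: because second $z$-derivatives commute, the quadratic test functions can never detect the part of $A$ that is antisymmetric in the range-index pair, so without a structural assumption the auxiliary variable $A$ is underdetermined; the role of the symmetry hypothesis is to remove exactly this undetermined degree of freedom. A secondary, purely bookkeeping difficulty is the tensorial index matching between $\nabla_z^2\phi$ and $H$ (the ``appropriate ordering'' deferred to the footnote of Definition~\ref{def:2nd_order_ce}), which must be tracked carefully so that the symmetrization acts on the intended index pair. The remaining measure-theoretic points -- passing from weak identities against all test functions to pointwise a.e.\ statements via Radon--Nikodym densities and the absolute-continuity hypotheses -- are routine once the correct test functions are in hand.
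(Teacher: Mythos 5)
Your proposal is correct and follows essentially the same route as the paper's own proof: subtracting the weak formulation of the canonical solution $((\nabla u\otimes\nabla u)\delta_u,(\nabla^2 u)\delta_u,\delta_u)$ from that of $(H,E,\mu)$, then testing with functions affine in $z$ (whose vanishing $z$-Hessian kills the $H$-term and identifies $E$) and subsequently with functions quadratic in $z$ with symmetric coefficient matrices (which, combined with the symmetry hypothesis on $A$, identifies $H$). Your explicit observation that the quadratic tests only see the range-index-symmetric part of $A$, so that the symmetry assumption is exactly what removes the remaining freedom, matches the role that hypothesis plays in the paper's argument.
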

\begin{proof}
        Let $E = \nabla^2 u$, $H = \nabla u \otimes \nabla u$ and let $(\tilde{H}, \tilde{E}) \in \mathcal{M}(U, \R^{d, s, d, s} \times \R^{d, d, s})$ with~${\tilde{E} \ll \mu}$ and $\tilde{E} = w \mu$ with $w \in L^1_{\mu}(\R^{d, d, s})$ be such that $(\tilde{H}, \tilde{E}, \mu)$ solves the second-order continuity equation~\eqref{eq:2nd_order_ce}.
        This implies
        \begin{equation}
                \int_U \langle \nabla_z \phi, w - \nabla^2 u \rangle \d \mu + \int_U \langle \nabla_z^2 \phi , \d (\tilde{H} - H) \rangle = 0 \quad \forall \phi \in C_c^2(U, \R^{d, d}). \label{eq:2nd_order_ce_sc_eq1}
        \end{equation}
        Let now $\psi \in C_c^2(\Omega, \R^{d,d,s})$ and construct from it $\hat{\phi}_{i,j}(x, z) := \langle \psi_{i,j}(x), z \rangle$.
        Obviously, $\hat{\phi} \in C_c^2(U, \R^{d,d})$ holds with $\nabla_z \hat{\phi} = \psi$ and $\nabla_z^2 \phi = 0$.
        Inserting $\hat{\phi}$ into~\eqref{eq:2nd_order_ce_sc_eq1} yields
        \begin{equation}
                \int_{\Omega} \langle \psi(x), w(x, u(x)) - \nabla^2 u(x) \rangle \d x = 0,
        \end{equation}
        which is why $w = \nabla^2 u$ holds almost everywhere on the support of $\mu$.
        
        As for the additional claim, let $\tilde{H} = \tilde{A} \mu$ be symmetric in the given sense.
        Furthermore, let $\psi \in C_c^2(\Omega, \R^{d,s,d,s})$ be an arbitrary symmetric function in the same sense, i.e., $\psi_{i,k,j,l}(x) = \psi_{i,l,j,k}(x)$ for~${i, j \in \{ 1, \ldots, d \} }$,~${k, l \in \{ 1, \ldots, s \} }$ and~${x \in \Omega}$.
        Now construct $\hat{\phi} \in C_c^2(U, \R^{d, d})$ through~${\hat{\phi}_{i, j}(x, z) := z^{\top} \psi_{i,\cdot,j,\cdot}(x) z}$ for $i, j = 1, \ldots, d$.
        Clearly, $\nabla_z^2 \hat{\phi} = \psi$ holds and therefore
        \begin{equation}
                \int_{U} \langle \psi, \d (\tilde{H} - H) \rangle = \int_{\Omega} \langle \psi(x, u(x)) , \tilde{A}(x, u(x)) - (\nabla u \otimes \nabla u)(x) \rangle \d x.
        \end{equation}
        By the arbitrariness (and symmetry) of $\psi$, one can conclude that $\tilde{A} = (\nabla u \otimes \nabla u)$ is true almost everywhere on the support of $\mu$. \qed
\end{proof}

As a final result for the second-order case, we define a lifting of the problem~${\inf_{u \in C^2(\Omega, \Gamma)} \int_{\Omega} f(x, u(x), \nabla^2 u(x)) \d x}$ over $\mathcal{M}(\Omega \times \Gamma)$ as
\begin{equation}
        \mathcal{F}(\mu) := \inf \{ \mathfrak{B}_f((E, \mu)) : (H, E, \mu) \text{ solves } \eqref{eq:2nd_order_ce} \text{ for symmetric } H \}.
        \label{eq:2nd_order_lifting}
\end{equation}
Note that although the results of the absolute continuity criterion in Proposition~\ref{prop:enforced_ac} do not apply to the additional condition from Proposition~\ref{prop:2nd_order_ce_sc}~--~the former is only concerned with the question of $E \ll \mu$ and not $H \ll \mu$ --, we can still formulate an analogue to \eqref{eq:1st_order_equiv} in the second-order case.
Precisely, one can conclude from Corollary~\ref{cor:bb_lifting} and Propositions~\ref{prop:enforced_ac}, \ref{prop:2nd_order_ce_nc}, \ref{prop:2nd_order_ce_sc} that 
\begin{equation}
        \mathcal{F}(\delta_u) = F(u) := \int_{\Omega} f(x, u(x), \nabla^2 u(x)) \d x
        \label{eq:2nd_order_equiv}
\end{equation}
holds for $u \in C^2(\Omega, \Gamma)$ with $F(u) < \infty$, supposing that all relevant assumptions on $f$ are met. 

\subsubsection{Laplacian Condition}
As an important class of second-order models, this section discusses the case $L = \Delta$, i.e. $p = \Delta u$, where the Laplacian of a vector-valued function $u: \Omega \to \Gamma \subset \R^s$ is understood in a componentwise sense, so that $\R^m = \R^s$.
Put differently, this case is derived from the previously discussed (full) second-order model by only considering the trace of each Hessian~${\nabla^2 u_k}$ for $k = 1, \ldots, s$.

Consequently, the second-order continuity equation~\eqref{eq:2nd_order_ce} simplifies to
\begin{equation}
        -\Delta_x \mu - \ddiv_z E + \ddiv_z^2 H = 0
        \label{eq:laplacian_ce}
\end{equation}
for vectorial measures $(H, E) \in \mathcal{M}(U, \R^{s, s} \times \R^s)$ of reduced dimensionality  (when compared to the full model).
Solutions to \eqref{eq:laplacian_ce} are defined as follows:

\begin{defn}
        A triple $(H, E, \mu) \in \mathcal{M}(U, \R^{s, s} \times \R^s \times \R)$ of measures is said to satisfy the \textup{Laplacian continuity equation}~\eqref{eq:laplacian_ce} if 
        \begin{equation}
                - \int_U \Delta_x \phi \d \mu + \int_U \langle \nabla_z \phi , \d E \rangle + \int_U \langle \nabla_z^2 \phi , \d H\rangle = 0
                \label{eq:laplacian_ce_def}
        \end{equation}
        holds for all $\phi \in C_c^2(U, \R)$.
        \label{def:laplacian_ce}
\end{defn}

Unsurprisingly, analogous necessary and sufficient conditions to the ones from the full second-order case, i.e., Propositions~\ref{prop:2nd_order_ce_nc} and \ref{prop:2nd_order_ce_sc}, hold true for the Laplacian continuity equation.
As their corresponding proofs proceed completely analogously to the ones from the previous section, we omit them here.

\begin{prop}
        Let $u \in C^2(\Omega, \Gamma)$ and let $(H, E, \mu) \in \mathcal{M}(U, \R^{s, s} \times \R^s \times \R)$ with~$\mu = \delta_u$, $E = (\Delta u) \delta_u$ and $H = (\sum_{i=1}^d \partial_{x_i} u \otimes \partial_{x_i} u) \delta_u$ in the sense of
        \begin{equation}
                \int_U \langle \phi, \d H \rangle := \int_{\Omega} \left\langle \phi(x, u(x)), \sum\nolimits_{i=1}^d (\partial_{x_i} u \otimes \partial_{x_i} u) (x) \right\rangle \d x
        \end{equation}
        for all $\phi \in C_0(U, \R^{s, s})$.
        Then, $(H, E, \mu)$ solves the Laplacian continuity equation as defined in \eqref{eq:laplacian_ce_def}.
        \label{prop:laplacian_ce_nc}
\end{prop}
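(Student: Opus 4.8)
The plan is to mimic the proof of Proposition~\ref{prop:2nd_order_ce_nc} almost verbatim, exploiting the fact that the Laplacian is merely the trace of the Hessian so that the scalar test function $\phi \in C_c^2(U,\R)$ plays the role that $\phi \in C_c^2(U,\R^{d,d})$ played in the full second-order case. Writing $h(x) := (x, u(x))$ for the graph map $h : \R^d \to \R^d\times\R^s$, I would fix an arbitrary $\phi \in C_c^2(U,\R)$ and compute the spatial Laplacian $\Delta_x(\phi\circ h)$ by two applications of the chain rule. Since $\phi$ has compact support in $U = \Omega\times\Gamma$, the composition $x \mapsto \phi(x,u(x))$ has compact support in $\Omega$, so Gauss' theorem gives $\int_\Omega \Delta_x(\phi\circ h)\,\d x = 0$; the whole argument then consists in expanding this identity and matching the resulting summands against the three measure integrals in~\eqref{eq:laplacian_ce_def}.

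The chain-rule expansion produces four groups of terms: a pure spatial Laplacian $\Delta_x\phi\circ h$, which integrates to $\int_U \Delta_x\phi\,\d\mu$ by the definition of $\mu=\delta_u$; a term $\sum_k \partial_{z_k}\phi\circ h\,\Delta u_k = \langle\nabla_z\phi\circ h,\Delta u\rangle$, matching $\int_U\langle\nabla_z\phi,\d E\rangle$ through $E=(\Delta u)\delta_u$; a pure second-order $z$-term $\sum_{i,k,l}\partial_{z_l}\partial_{z_k}\phi\circ h\,\partial_{x_i}u_l\,\partial_{x_i}u_k = \langle\nabla_z^2\phi\circ h,\sum_i\partial_{x_i}u\otimes\partial_{x_i}u\rangle$, matching $\int_U\langle\nabla_z^2\phi,\d H\rangle$ via the definition of $H$; and finally a mixed term $2\sum_{i,k}\partial_{x_i}\partial_{z_k}\phi\circ h\,\partial_{x_i}u_k$, where the factor $2$ arises from the symmetry $\partial_{x_i}\partial_{z_k}\phi=\partial_{z_k}\partial_{x_i}\phi$. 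This mixed term has no immediate counterpart in~\eqref{eq:laplacian_ce_def} and must be disposed of.

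Handling the mixed term is the crux of the argument and the only nonroutine step. I would rewrite its derivative using the chain-rule identity $\partial_{x_i}\partial_{z_k}\phi\circ h = \partial_{x_i}\bigl(\partial_{z_k}\phi\circ h\bigr) - \sum_l \partial_{z_l}\partial_{z_k}\phi\circ h\,\partial_{x_i}u_l$, so that the term splits into a total $x$-derivative and a copy of the pure $z$-Hessian term. Integrating the total-derivative part over $\Omega$ by parts (boundary contributions vanishing by compact support) turns it into $-\int_\Omega\langle\nabla_z\phi\circ h,\Delta u\rangle\,\d x = -\int_U\langle\nabla_z\phi,\d E\rangle$, while the leftover piece contributes $-\int_U\langle\nabla_z^2\phi,\d H\rangle$; carrying the factor $2$ through, the integrated mixed term equals $-2\int_U\langle\nabla_z\phi,\d E\rangle-2\int_U\langle\nabla_z^2\phi,\d H\rangle$. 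Substituting all four contributions into $0=\int_\Omega\Delta_x(\phi\circ h)\,\d x$ and collecting yields $\int_U\Delta_x\phi\,\d\mu-\int_U\langle\nabla_z\phi,\d E\rangle-\int_U\langle\nabla_z^2\phi,\d H\rangle=0$, which is exactly~\eqref{eq:laplacian_ce_def} after multiplication by $-1$. The main obstacle is therefore pure bookkeeping: tracking the factor $2$ and the sign flips introduced by the integration by parts, so that the spurious $E$- and $H$-contributions cancel against the direct ones and leave precisely the stated coefficients.
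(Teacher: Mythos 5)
Your proposal is correct and is precisely the adaptation of the second-order argument (Proposition~\ref{prop:2nd_order_ce_nc}) that the paper itself invokes when it omits this proof as ``completely analogous'': expand $\Delta_x(\phi\circ h)$ by the chain rule, integrate over $\Omega$ using compact support, and eliminate the mixed $x$-$z$ term by integration by parts so that it cancels one copy each of the $E$- and $H$-contributions. All the bookkeeping checks out -- the factor $2$ from symmetry of mixed partials (playing the role of the $\phi_{i,j}+\phi_{j,i}$ symmetrization in the full second-order proof), the vanishing boundary terms, and the final sign flip yielding \eqref{eq:laplacian_ce_def}.
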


\begin{prop}
        Let $u \in C^2(\Omega, \Gamma)$ and let~${(H, E, \mu) \in \mathcal{M}(U, \R^{s, s} \times \R^s \times \R)}$ for $\mu = \delta_u$ solve the Laplacian continuity equation~\eqref{eq:laplacian_ce_def}.
        Then, $E \ll \mu$ implies~${E = (\Delta u) \delta_u}$ almost everywhere on the support of $\mu$.
        
        If, additionally, $H \ll \mu$ holds for a symmetric density $A$ of $H = A \delta_u$, then~${H = (\sum_{i=1}^d \partial_{x_i} u \otimes \partial_{x_i} u) \delta_u}$ is true almost everywhere on the support of $\mu$.
        \label{prop:laplacian_ce_sc}
\end{prop}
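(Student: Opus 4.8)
The plan is to mirror the argument used for Proposition~\ref{prop:2nd_order_ce_sc}, exploiting that both the given triple and the ``canonical'' one built from $u$ solve the \emph{same} linear equation~\eqref{eq:laplacian_ce_def}, so that their difference is annihilated by every admissible test function. First I would record that, by Proposition~\ref{prop:laplacian_ce_nc}, the canonical triple with $E_{\mathrm{can}} = (\Delta u)\delta_u$ and $H_{\mathrm{can}} = (\sum_{i=1}^d \partial_{x_i} u \otimes \partial_{x_i} u)\delta_u$ solves~\eqref{eq:laplacian_ce_def}. Subtracting this equation from the one satisfied by the given $(H, E, \mu)$ makes the $\Delta_x\phi$ term cancel, since both share $\mu = \delta_u$. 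Writing $E = w\mu$ (using $E \ll \mu$), this leaves
\begin{equation}
\int_U \langle \nabla_z \phi, w - \Delta u \rangle \d\mu + \int_U \langle \nabla_z^2 \phi, \d(H - H_{\mathrm{can}}) \rangle = 0
\end{equation}
for all $\phi \in C_c^2(U, \R)$.

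For the first claim I would isolate the $E$-term by testing with functions that are affine in the range variable. Concretely, for $\psi \in C_c^2(\Omega, \R^s)$ set $\hat\phi(x,z) := \langle \psi(x), z\rangle$; this lies in $C_c^2(U, \R)$ because $\psi$ has compact support in $\Omega$ and $\Gamma$ is compact, and it satisfies $\nabla_z \hat\phi = \psi(x)$ and $\nabla_z^2 \hat\phi = 0$. The Hessian term then drops out regardless of $H$, and unfolding $\delta_u$ via~\eqref{eq:delta_u_def} reduces the identity to $\int_\Omega \langle \psi(x), w(x, u(x)) - \Delta u(x)\rangle \d x = 0$ for every such $\psi$. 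By the fundamental lemma of the calculus of variations this forces $w(x,u(x)) = \Delta u(x)$ for a.e.\ $x$, i.e.\ $E = (\Delta u)\delta_u$ almost everywhere on $\supp\mu$.

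For the additional claim I would, having established $w = \Delta u$ so that the first integral now vanishes for every $\phi$, test with functions that are quadratic in the range variable. Writing $H = A\delta_u$ with $A$ symmetric and taking any symmetric $\psi \in C_c^2(\Omega, \R^{s,s})$, the choice $\hat\phi(x,z) := \tfrac12 z^{\top}\psi(x) z$ gives $\nabla_z^2 \hat\phi = \psi(x)$ and $\nabla_z \hat\phi = \psi(x)z$, and the remaining identity collapses to $\int_\Omega \langle \psi(x), A(x,u(x)) - \sum_{i=1}^d (\partial_{x_i} u \otimes \partial_{x_i} u)(x)\rangle \d x = 0$. Since $\psi$ ranges over all symmetric matrix fields and both $A$ and $\sum_{i=1}^d \partial_{x_i} u \otimes \partial_{x_i} u$ are symmetric, this yields $A = \sum_{i=1}^d \partial_{x_i} u \otimes \partial_{x_i} u$ almost everywhere on $\supp\mu$.

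The computation itself is routine; the only points requiring care are the admissibility of the quadratic test functions (smoothness and compact support in $x$, which hold thanks to compactness of $\Gamma$) and the symmetry bookkeeping in the second step: testing against symmetric $\psi$ recovers only the symmetric part of the density difference, so it is essential that both $A$ and the canonical density be symmetric for the conclusion to follow. I expect this symmetry restriction, rather than any analytic subtlety, to be the main thing to get right, exactly as in the full second-order case of Proposition~\ref{prop:2nd_order_ce_sc}.
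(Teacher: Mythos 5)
Your proof is correct and is exactly the argument the paper intends: the paper omits the proof of Proposition~\ref{prop:laplacian_ce_sc}, stating it proceeds ``completely analogously'' to Proposition~\ref{prop:2nd_order_ce_sc}, and your adaptation -- subtracting the canonical solution from Proposition~\ref{prop:laplacian_ce_nc}, isolating the $E$-term with test functions $\hat\phi(x,z) = \langle \psi(x), z\rangle$ affine in $z$, then handling $H$ with symmetric quadratic test functions $\hat\phi(x,z) = \tfrac12 z^{\top}\psi(x)z$ once the first term vanishes -- is precisely that analogue. Your explicit remarks on why the quadratic test functions are admissible and why the symmetry of $A$ is essential are also correct and, if anything, slightly more careful than the paper's second-order proof (which drops a factor of $2$ in $\nabla_z^2\hat\phi$ that your $\tfrac12$ normalization fixes).
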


Similarly to the previous two cases, we define a lifted version of the problem~${\inf_{u \in C^2(\Omega, \Gamma)} \int_{\Omega} f(x, u(x), \Delta u(x)) \d x}$ over $\mathcal{M}(\Omega \times \Gamma)$ as
\begin{equation}
        \mathcal{F}(\mu) := \inf\{ \mathfrak{B}_f((E, \mu)) : (H, E, \mu) \text{ solves } \eqref{eq:laplacian_ce_def} \text{ for sym. pos. semidef. } H \}.
        \label{eq:laplacian_lifting}
\end{equation}
Here, we added the additional assumption of positive semidefiniteness on $H$ as this property is always satisfied under the (full) assumptions of Proposition~\ref{prop:2nd_order_ce_sc} and as it therefore constitutes a reasonable restriction of the feasible set for the lifted functional~$\mathcal{F}$.

As before, we obtain the following relationship between the original problem and the lifted one thanks to Corollary~\ref{cor:bb_lifting} and Propositions~\ref{prop:enforced_ac},~\ref{prop:laplacian_ce_nc},~\ref{prop:laplacian_ce_sc}:
\begin{equation}
        \mathcal{F}(\delta_u) = F(u) := \int_{\Omega} f(x, u(x), \Delta u(x)) \d x
        \label{eq:laplacian_lifting_compatibility}
\end{equation}
for $u \in C^2(\Omega, \Gamma)$ with $F(u) < \infty$, supposing that the integrand $f$ is as in Corollary~\ref{cor:bb_lifting}. 
%
%{
%\ttfamily\small
%\begin{itemize}
%       \item
%       GRAD <-> 1st order continuity eqn. (dual representation w/o proof)
%\end{itemize}
%}

\subsection{Connections to Lifting Models and Optimal Transport Problems}
\label{subsec:connections}

\subsubsection{Scalar-valued Subgraph-Lifting \cite{pock2010}}
In this section, we establish a connection of our proposed first-order model~\eqref{eq:1st_order_lifting} with scalar range~$\Gamma$ to the subgraph-representation approach from~\cite{pock2008,pock2010} as recapitulated in Section~\ref{subsec:related_work}.
The key insight linking the two models is the fact that the latter, i.e.,~${\sup_{\phi \in \mathcal{K}} \int_{\Omega \times \Gamma} \langle \phi, D v \rangle}$ from \eqref{eq:calibration_lifting} is, in truth, not a functional in $v \in \BV(\Omega \times \Gamma)$ itself, but rather in its distributional derivative $D v \in \mathcal{M}(\Omega \times \Gamma, \R^d \times \R)$.

For integrands of the form 
\begin{equation}
        f(x, u(x), \nabla u (x)) = \rho(x, u(x)) + \eta(\nabla u (x))
\end{equation}
the dually admissible set~$\mathcal{K}_f$ \eqref{eq:bb_dual_set} of our proposed Benamou-Brenier functional proves to be equal to the admissible set~$\mathcal{K}$ \eqref{eq:calibration_dual_set} of the subgraph-lifting up to a change in the sign of the last component of the included test functions $\phi$.
We address this discrepancy by introducing the notation $D^- v$ for the vectorial measure which is equal to $D v$ in the first~$d$ components and that is the negative of $D v$ in the last component.
This allows for the compact denotation of the equivalence between the two models as
\begin{equation}
        \sup_{\phi \in \mathcal{K}} \int_{\Omega \times \Gamma} \langle \phi, D v \rangle = \mathfrak{B}_f (D^- v).
        \label{eq:subgraph_bb_equiv}
\end{equation}

In fact, one of the key results of \cite{pock2010} is obtained as a corollary from the derivations in this work:
\begin{cor}[{\cite[Theorem~3.2]{pock2010}}]
        Let $\Omega \subset \R^d$ be open, let $\Gamma \subset \R$ be compact and let~$f: \Omega \times \Gamma \times \R^d \to [0, \infty]$ be as in Theorem~\ref{thm:bb_integral_representation}, then
        \begin{equation}
                \int_{\Omega} f(x, u(x), \nabla u(x)) \d x = \sup_{\phi \in \mathcal{K}} \int_{\Omega \times \Gamma} \langle \phi, D \mathbf{1}_u \rangle
        \end{equation}
        holds for all $u \in W^{1,1}(\Omega)$.
\end{cor}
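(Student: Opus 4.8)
The plan is to reduce the statement entirely to the equivalence \eqref{eq:subgraph_bb_equiv} and to Corollary~\ref{cor:bb_lifting}. By \eqref{eq:subgraph_bb_equiv} the right-hand side equals $\mathfrak{B}_f(D^- \mathbf{1}_u)$, so it suffices to identify the vectorial measure $D^- \mathbf{1}_u \in \mathcal{M}(\Omega \times \Gamma, \R^d \times \R)$ with the graph-concentrated pair $(\nabla u\,\delta_u, \delta_u)$ occurring in Corollary~\ref{cor:bb_lifting} (with $p = \nabla u$ and $\R^m = \R^d$, as $\Gamma$ is scalar), and then to invoke the underlying identity \eqref{eq:bb_E_mu_ac} to obtain $\mathfrak{B}_f((\nabla u\,\delta_u, \delta_u)) = \int_\Omega f(x, u(x), \nabla u(x))\d x$. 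Chaining these equalities yields the claim.

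The heart of the argument is thus the computation of $D\mathbf{1}_u$. First I would recall that $\mathbf{1}_u \in \BV(\Omega \times \Gamma)$ whenever $u \in W^{1,1}(\Omega)$, since the total variation of the subgraph indicator equals the graph area $\int_\Omega \sqrt{1 + \|\nabla u\|^2}\d x \leq |\Omega| + \|\nabla u\|_{L^1} < \infty$. Because a $W^{1,1}$ function has null jump set, the reduced boundary of the subgraph is exactly its graph $\Sigma_u := \{(x, u(x)) : x \in \Omega\}$ with no vertical contributions, so $D\mathbf{1}_u = \nu\,\mathcal{H}^d \llcorner \Sigma_u$ for the inner unit normal $\nu = (\nabla u, -1)/\sqrt{1 + \|\nabla u\|^2}$. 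Pairing a field $\phi = (\phi^x, \phi^z) \in C_0(\Omega\times\Gamma, \R^d\times\R)$ against this measure and reducing the surface integral to $\Omega$ by the area formula (whose Jacobian $\sqrt{1 + \|\nabla u\|^2}$ cancels the normalization of $\nu$) gives
\begin{equation}
        \int_{\Omega\times\Gamma} \langle \phi, D\mathbf{1}_u \rangle = \int_\Omega \langle \phi^x(x, u(x)), \nabla u(x) \rangle \d x - \int_\Omega \phi^z(x, u(x)) \d x .
\end{equation}
Flipping the sign of the last component, as prescribed by the definition of $D^-$, turns the minus sign into a plus; comparing with \eqref{eq:delta_u_def} and the defining relation $E = p\delta_u$ of Corollary~\ref{cor:bb_lifting} shows precisely that $D^-\mathbf{1}_u = (\nabla u\,\delta_u, \delta_u)$. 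Since $\mu = \delta_u \geq 0$ and $E = \nabla u\,\delta_u \ll \mu$ with Radon--Nikodym density $v(x,u(x)) = \nabla u(x)$ along $\Sigma_u$, identity \eqref{eq:bb_E_mu_ac} applies and delivers $\mathfrak{B}_f(D^-\mathbf{1}_u) = \int_\Omega f(x, u(x), \nabla u(x))\d x$, as required.

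I expect the main obstacle to lie in the first part of the second paragraph, namely the rigorous representation of $D\mathbf{1}_u$ for a merely $W^{1,1}$ (rather than $C^1$) function $u$: one must justify that $\mathbf{1}_u$ is of bounded variation, that its derivative is of pure graph type with the stated normal, and that no boundary contribution on $\Omega \times \{\min\Gamma\}$ or $\Omega \times \{\max\Gamma\}$ enters --- the latter being controlled by the subgraph boundary values $\mathbf{1}_u(\cdot, \min\Gamma) = 1$ and $\mathbf{1}_u(\cdot, \max\Gamma) = 0$. The cleanest route is an approximation argument: choose $u_k \in C^1(\Omega, \Gamma)$ with $u_k \to u$ in $W^{1,1}$, verify the displayed identity for each smooth $u_k$ via the chain rule and the area formula, and pass to the limit using the uniform total-variation bound above together with the weak-$*$ convergence $D\mathbf{1}_{u_k} \rightharpoonup D\mathbf{1}_u$ in $\mathcal{M}(\Omega\times\Gamma, \R^{d+1})$. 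A minor additional point is finiteness: since I route the conclusion through \eqref{eq:bb_E_mu_ac} (which only requires $\mu \geq 0$ and $E \ll \mu$) rather than through the finiteness hypothesis of Corollary~\ref{cor:bb_lifting}, the equality $\mathfrak{B}_f(D^-\mathbf{1}_u) = \int_\Omega f(x,u,\nabla u)\d x$ persists even when $F(u) = +\infty$, so the result holds for all $u \in W^{1,1}(\Omega)$.
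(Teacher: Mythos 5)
Your proposal is correct and follows essentially the same route as the paper: both reduce via \eqref{eq:subgraph_bb_equiv} to evaluating $\mathfrak{B}_f(D^-\mathbf{1}_u)$, identify $D^-\mathbf{1}_u$ as the graph-concentrated pair $(\nabla u\,\delta_u, \delta_u)$, and then conclude by the integral representation. The only cosmetic differences are that you finish through \eqref{eq:bb_E_mu_ac} (where the one-homogeneity of $\tilde{h}_f$ is already built in), whereas the paper applies Theorem~\ref{thm:bb_integral_representation} directly to the density $(\nabla u, 1)/\sqrt{1 + \|\nabla u\|^2}$ and performs the homogeneity rescaling by hand, and that your reduced-boundary/area-formula/approximation argument for the structure of $D\mathbf{1}_u$ supplies a step the paper merely asserts.
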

\begin{proof}
        As the assumptions of Theorem~\ref{thm:bb_integral_representation} are all satisfied, we infer from \eqref{eq:subgraph_bb_equiv} the equivalence
        \begin{equation}
                \sup_{\phi \in \mathcal{K}} \int_{\Omega \times \Gamma} \langle \phi, D v \rangle = \int_{\Omega \times \Gamma} \tilde{h}_f \left((x, z), \frac{\d (D^- \mathbf{1}_u)}{\d | D^- \mathbf{1}_u |} (x, z) \right) \, | D^- \mathbf{1}_u | (d (x, z)).
                \label{eq:pock_theorem32_eq1}
        \end{equation}
        Since $u \in W^{1,1} (\Omega)$, we know that $D^- \mathbf{1}_u$ is concentrated on the graph of~$u$ and that the explicit form of the vectorial density appearing in~\eqref{eq:pock_theorem32_eq1} is given by
        \begin{equation}
                \frac{\d (D^- \mathbf{1}_u)}{\d | D^- \mathbf{1}_u |} = \frac{(\nabla u, 1)}{\sqrt{1 + \| \nabla u \| ^ 2}}.
        \end{equation}
        The density of~$D^- \mathbf{1}_u$ with respect to $\delta_u$, on the other hand, is given simply by~$\d (D^- \mathbf{1}_u) / \d \delta_u = (\nabla u, 1)$.
        This implies, in particular,~$\mu = \delta_u$ for the last component~$\mu$ of~$D^- \mathbf{1}_u$.
        By the positive homogeneity of $\tilde{h}_f$, one obtains
        \begin{equation}
                \tilde{h}_f \left((x, z), \frac{\d (D^- \mathbf{1}_u)}{\d | D^- \mathbf{1}_u |} (x, z) \right) = \frac{ \tilde{h}_f ((x, z), (\nabla u (x), 1)) }{\sqrt{1 + \| \nabla u \| ^ 2}}
                \label{eq:pock_theorem32_eq2}
        \end{equation}
        for all $(x, z)$ on the graph of $u$.
        Bearing in mind that the denominator of~\eqref{eq:pock_theorem32_eq2} equals the density~$\d \mu / \d | D^- \mathbf{1}_u |$, equations~\eqref{eq:pock_theorem32_eq1} and~\eqref{eq:pock_theorem32_eq2} together yield
        \begin{align}
                \sup_{\phi \in \mathcal{K}} \int_{\Omega \times \Gamma} \langle \phi, D v \rangle &= 
                \int_{\Omega \times \Gamma} \tilde{h}_f ((x, z), (\nabla u (x), 1)) \d \mu \\
                &=\int_{\Omega \times \Gamma} f(x, z, \nabla u(x)) \d \delta_u,
        \end{align}
        which proves the assertion. \qed
\end{proof}

Since the two models' energy functionals are equal by~\eqref{eq:subgraph_bb_equiv}, differences can only be found in their respective admissible sets.
It is well-known that the left-hand side of~\eqref{eq:subgraph_bb_equiv} achieves finite energies only for functions~$v \in \mathcal{C}$ from~\eqref{eq:calibration_admissible_set} which are nonincreasing in the second argument.
Therefore, the admissible set~$\mathcal{C}$ may be amended by a corresponding constraint without loss of generality.
As a result, the last component of $D^- v$ can be seen to be in $L_w^{\infty}(\Omega, \mathcal{P}(\Gamma))$ -- the admissible set of our proposed model -- for all such $v$ by the slicing theory for functions of bounded variation \cite[Lemma~3.106]{ambrosio2000}.
The converse inclusion, however, i.e., the question whether for each $\mu \in L_w^{\infty}(\Omega, \mathcal{P}(\Gamma)$ there exists a~${v \in C}$ such that the last component of $D^- v$ equals $\mu$, does not allow for a positive answer as easily.
We resort to the observation that an additional degree of regularity in the spatial domain~$\Omega$ is required for~$\mu$ -- intuitively, this regularity is provided by the continuity equation~\eqref{eq:1st_order_ce} -- and leave a thorough analysis for future work.

To summarize, we found the scalar version of our proposed first-order model to be largely equivalent to the subgraph-based approach from~\cite{pock2010}.
At the same time, our methodology provides the advantage of a natural extension to vectorial ranges and higher orders of regularization, even though these generalizations come at the expense of a rounding procedure with guaranteed optimality for the unlifted problem as in~\cite[Theorem~3.1]{pock2010}. 

\subsubsection{Lifting Problems with Laplacian Regularization \cite{vogt2019a}}
The first continuous formulation of a lifting strategy for~\eqref{eq:target_functional} in case of a Laplacian regularization, i.e., for problems with integrands of the form
\begin{equation}
        f(x, u(x), \Delta u (x)) = \rho(x, u(x)) + \eta(\Delta u (x))
\end{equation}
was proposed in \cite{vogt2019a}.
In that work, the original problem is lifted to the functional
\begin{equation}
        \mathcal{F}(\mu) = \sup_{(p, q) \in X} \int_{\Omega \times \Gamma} (\Delta_x p + q) \d \mu
        \label{eq:laplace_lifting_ssvm}
\end{equation}
acting on measure-valued functions $\mu : \Omega \to \mathcal{P}(\Gamma)$.
Furthermore, the dually admissible vector fields~$(p, q)$ are given by the set
\begin{multline}
        X = \{ (p, q) \in C_c^2(\Omega \times \Gamma) \times L^1(\Omega \times \Gamma) : z \mapsto p(x, z) \text{ concave } \forall x \in \Omega, \\ q(x, z) + f^*(x, z, \nabla_z p(x, z)) \leq 0 \ \forall (x, z) \in \Omega \times \Gamma \}.
        \label{eq:laplace_lifting_dual_set_ssvm}
\end{multline}
It is then shown in \cite[Proposition~1]{vogt2019a} that
\begin{equation}
        \mathcal{F}(\delta_u) \leq \int_{\Omega} f(x, u(x), \Delta u (x)) \d x
        \label{eq:laplace_lifting_ssvm_ineq}
\end{equation}
holds for all sufficiently smooth~$u$.

As we shall show, the functional~\eqref{eq:laplace_lifting_ssvm} is closely connected to the Laplacian lifting method derived in this work.
To this end, consider the primal-dual formulation
\begin{multline}
        \inf_{E, H} \sup_{\substack{\phi \in \mathcal{K}_f, p}} \, \int_{\Omega \times \Gamma} (-\Delta_x p + \phi^{\lambda}) \d \mu + \int_{\Omega \times \Gamma} \langle \nabla_z p + \phi^{\xi}, \d E \rangle + \int_{\Omega \times \Gamma} \langle \nabla_z^2 p, \d H \rangle\\
        \text{ s.t. } H \text{ sym. pos. semidef.}
        \label{eq:laplacian_lifting_pd}
\end{multline}
of~\eqref{eq:laplacian_lifting} for given $\mu \in \mathcal{M}(\Omega \times \Gamma)$.
In accordance with definition~\ref{def:laplacian_ce}, the test functions $p$ are chosen from $C_c^2(\Omega \times \Gamma)$.
Formally swapping the order of minimization and maximization in~\eqref{eq:laplacian_lifting_pd} yields $\phi^{\xi} = -\nabla_z p$ as well as the positive semidefiniteness of~$\nabla_z^2 p$ almost everywhere, so that the remaining dual formulation of~\eqref{eq:laplacian_lifting_pd} reads
\begin{equation}
        \sup_{p \in C_c^2(\Omega \times \Gamma)} \int_{\Omega \times \Gamma} (- \Delta_x p + \phi^{\lambda}) \d \mu \quad \text{s.t.} \ (-\nabla_z p, \phi^{\lambda}) \in \mathcal{K}_f \text{ and } \nabla_z^2 p \ \text{pos. semidef.}
        \label{eq:laplacian_lifting_d}
\end{equation}
Clearly, \eqref{eq:laplacian_lifting_d} equals~\eqref{eq:laplace_lifting_ssvm} up to the sign of the variable~$p$ -- in fact, this is only a notational deviation since the positive semidefiniteness of~$\nabla_z^2 p$ is equivalent to the concavity of~$-p$.
At the same time, note that this does not prove the missing (in-)equality in~\eqref{eq:laplace_lifting_ssvm_ineq} as the derivation of~\eqref{eq:laplacian_lifting_d} is only formal and hard to justify rigorously.
Therefore, the compatibility of a Laplacian lifting with the original problem (as in \eqref{eq:laplacian_lifting_compatibility}) remains exclusive to the primal formulation~\eqref{eq:laplacian_lifting}.

\subsubsection{Dynamical Optimal Transport and Harmonic Mappings \cite{brenier2003,lavenant2019}}
As alluded to above, the proposed Benamou-Brenier functional~\eqref{eq:bb_functional} can be seen as a direct extension of the target functional~\eqref{eq:benamou_brenier} for dynamical transport problems.
By the well-known conjugacy relationship
\begin{equation}
        \frac{ \| \cdot \|^p }{p} \overset{*}{\longleftrightarrow} \frac{ \| \cdot \|^q }{q} \quad \text{for} \ p, q \in (1, \infty) \ \text{with} \ \frac{1}{p} + \frac{1}{q} = 1,
\end{equation}
it is easy to see that the dually admissible set~$\mathcal{K}_q$ from~\eqref{eq:benamou_brenier_dual_set} is a special case of the admissible set $\mathcal{K}_f$ from~\eqref{eq:bb_dual_set} for integrands of the form $f(t, v) = \| v \|^p / p$.
Apart from the lack of a boundary condition on~$\mu$ as in \eqref{eq:benamou_brenier}, the transport problem~\eqref{eq:benamou_brenier} is therefore equivalent to our proposed Benamou-Brenier functional~\eqref{eq:bb_functional} for such integrands under the continuity equation from~\eqref{eq:benamou_brenier}.

\begin{figure}[t]
	\centering
	\input{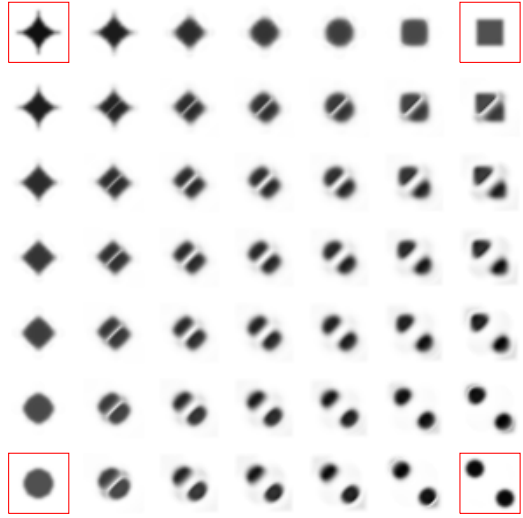}
	\caption{
		Illustration of a harmonic (probability) measure-valued mapping~${\mu : \Omega \to \mathcal{P}(\Gamma)}$ for two-dimensional~$\Omega$ and~$\Gamma$:
		Given the values of~$\mu$ at the four corners of a rectangular region~$\Omega$ (marked with red boxes), a one-dimensional problem of the form~\eqref{eq:harmonic_wasserstein_problem} is solved for each of the four boundaries.
		In a second step, these interpolants serve as boundary values for a two-dimensional problem~\eqref{eq:harmonic_wasserstein_problem} that generates the displayed interpolation in the interior of $\Omega$.
		Illustration inspired by \cite{solomon2015}.
	}
	\label{fig:harmonic_interpolation_2d}
\end{figure}

On an abstract level, this means that while dynamical optimal transport solutions can be seen as \textit{interpolants} between measures \textit{given on the spatial boundary}~$\partial \Omega$ satisfying a predefined notion of regularity, our method generates \textit{minimal energy surfaces} inside of cost landscapes \textit{given over the spatial region}~$\Omega$ which adhere to the same notion of regularity.
Said cost landscapes are thereby represented by a data term~$\rho$ that amends the above-discussed regularization term in the integrand~$f$.

One special case of optimal transport interpolants which was studied in~\cite{brenier2003,lavenant2019} are mappings~$\mu$ from a (possibly multidimensional) region~$\Omega$ to the space of probability measures~$\mathcal{P}(\Gamma)$ that minimize the energy corresponding to the integrand~$f(t, v) = \| v \|^2 / 2$ under the continuity equation~\eqref{eq:1st_order_ce} as well as prescribed data $\mu_b$ on the spatial boundary~$\partial \Omega$.
More precisely, consider
\begin{multline}
        \inf_{(\mu, E)} \sup_{(\alpha, \beta)} \bigg\{ \int_{\Omega \times \Gamma} \langle (\alpha, \beta), \d (\mu, E) \rangle : \alpha(t) + \frac{\|\beta(t)\|^2}{2} \leq 0 \ \forall t \in \Omega \times \Gamma \bigg\} \\
        \text{s.t.} \ \nabla_x \mu + \ddiv_z E = 0, \ \mu = \mu_b \ \text{on} \ \partial \Omega.
        \label{eq:harmonic_wasserstein_problem}
\end{multline}
Such mappings can be seen to generalize the concept of harmonic functions (cf.~\cite[Section~3]{brenier2003}) and are  termed \textit{harmonic mappings with values in the Wasserstein space}.
See Figure~\ref{fig:harmonic_interpolation_2d} for an illustration of such a mapping.

Further discussed in~\cite{brenier2003} is a system of (formally derived) optimality equations for solutions~$(\mu, E)$ to problem~\eqref{eq:harmonic_wasserstein_problem}.
As we shall show, these equations can be linked to non-trivial solutions of our proposed Laplacian continuity equation~\eqref{eq:laplacian_ce}.

\begin{prop}
        Let $(E, \mu) \in \mathcal{M}(\Omega \times \Gamma, \R^{d, s} \times \R)$ be a solution of~\eqref{eq:1st_order_ce} with a Radon-Nikodym density $\d E / \d \mu =: e = (e_1, \ldots, e_s) \in L^1_{\mu}(\Omega \times \Gamma, \R^{d, s})$ that additionally satisfies
        \begin{equation}
                \int_{\Omega \times \Gamma} \langle \nabla_x \phi, \d E \rangle + \int_{\Omega \times \Gamma} \sum\nolimits_{i = 1}^s \langle \nabla_z \phi, e_i \otimes e_i \rangle \d \mu = 0
                \label{eq:harmonic_wasserstein_optimality}
        \end{equation}
        for all $\phi \in C_c^1(\Omega \times \Gamma, \R^s)$, i.e., that solves equation~(57) from~\cite{brenier2003}.
        Then, the triple~$((\sum_i e_i \otimes e_i) \mu, 0, \mu) \in \mathcal{M}(\Omega \times \Gamma, \R^{s, s} \times \R^s \times \R)$ is a solution of the Laplacian continuity equation~\eqref{eq:laplacian_ce}.
        \label{prop:harmonic_laplacian_ce}
\end{prop}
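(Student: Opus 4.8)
The plan is to verify the weak Laplacian continuity equation~\eqref{eq:laplacian_ce_def} for the proposed triple $((\sum_i e_i\otimes e_i)\mu,\,0,\,\mu)$ directly from its definition, feeding suitable gradient fields of a scalar test function into the two hypotheses. Fix $\phi\in C_c^2(\Omega\times\Gamma,\R)$. Since the first-order slot of the triple is the zero measure, the middle term of~\eqref{eq:laplacian_ce_def} vanishes, and since $\d H=(\sum_i e_i\otimes e_i)\,\d\mu$, the assertion reduces to
\[
-\int_{\Omega\times\Gamma}\Delta_x\phi\,\d\mu+\int_{\Omega\times\Gamma}\Big\langle\nabla_z^2\phi,\,\textstyle\sum_i e_i\otimes e_i\Big\rangle\,\d\mu=0 .
\]
Here $E$ still denotes the \emph{given} first-order measure, not the (zero) middle slot of the new triple.

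The key observation is that one single \emph{mixed} second-order quantity couples both hypotheses. First I would apply the first-order continuity equation~\eqref{eq:1st_order_ce_def} to the admissible field $\nabla_x\phi\in C_c^1(\Omega\times\Gamma,\R^d)$. Since $\ddiv_x\nabla_x\phi=\Delta_x\phi$ and, by the chain-rule bookkeeping already carried out in Proposition~\ref{prop:1st_order_ce_nc}, the second integrand becomes the mixed term, this yields
\[
\int_{\Omega\times\Gamma}\Delta_x\phi\,\d\mu+T=0,\qquad T:=\int_{\Omega\times\Gamma}\langle\nabla_x\nabla_z\phi,\,\d E\rangle ,
\]
where the two admissible orderings $\nabla_z\nabla_x\phi$ and $\nabla_x\nabla_z\phi$ of the mixed second derivative coincide by Schwarz's theorem on $\phi\in C^2$. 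Second, I would apply the hypothesis~\eqref{eq:harmonic_wasserstein_optimality} to the admissible field $\nabla_z\phi\in C_c^1(\Omega\times\Gamma,\R^s)$: its $\nabla_x$-term is exactly the same coupling $T$, whereas $\nabla_z(\nabla_z\phi)=\nabla_z^2\phi$ turns its second term into $\int\langle\nabla_z^2\phi,\sum_i e_i\otimes e_i\rangle\,\d\mu$, so that
\[
T+\int_{\Omega\times\Gamma}\Big\langle\nabla_z^2\phi,\,\textstyle\sum_i e_i\otimes e_i\Big\rangle\,\d\mu=0 .
\]
Subtracting the two displayed identities eliminates $T$ and produces precisely the reduced claim, completing the verification for every $\phi\in C_c^2(\Omega\times\Gamma,\R)$.

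The individual computations are essentially those of Proposition~\ref{prop:1st_order_ce_nc}, so the only points requiring care are admissibility and tensor bookkeeping: one must check that $\nabla_x\phi$ and $\nabla_z\phi$ inherit the compact $x$-support and $C^1$-regularity demanded of test fields in~\eqref{eq:1st_order_ce_def} and~\eqref{eq:harmonic_wasserstein_optimality}, and that the (here suppressed) transpositions entering the two mixed-derivative pairings with $E$ are taken consistently, so that both genuinely reduce to the same scalar $T$. I expect this identification of the common coupling term $T$ — rather than any analytic subtlety — to be the crux of the argument; once it is in place, no absolute-continuity, approximation, or density-recovery argument is needed, the assumption $E\ll\mu$ serving only to make the density $e$, and hence $H$ and~\eqref{eq:harmonic_wasserstein_optimality}, well defined.
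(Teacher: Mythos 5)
Your proof is correct and follows essentially the same route as the paper's: testing the first-order continuity equation with $\nabla_x\phi$, testing the hypothesis~\eqref{eq:harmonic_wasserstein_optimality} with $\nabla_z\phi$, and eliminating the common mixed-derivative term (via Schwarz) to obtain the reduced Laplacian equation with $E$-slot zero. The only cosmetic difference is that you phrase the final step as a subtraction while the paper phrases it as a substitution of one identity into the other.
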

\begin{proof}
        Let $\psi \in C_c^2(\Omega \times \Gamma, \R)$ be arbitrary.
        Then, one has $\nabla_x \psi \in C_c^1(\Omega \times \Gamma, \R^d)$ as well as $\nabla_z \psi \in C_c^1(\Omega \times \Gamma, \R^s)$.
        Therefore, it holds by~\eqref{eq:1st_order_ce} that
        \begin{equation}
                \int_{\Omega \times \Gamma} \ddiv_x (\nabla_x \psi) \d \mu + \int_{\Omega \times \Gamma} \langle \nabla_z (\nabla_x \psi), \d E \rangle = 0
                \label{eq:harmonic_laplacian_ce_eq1}
        \end{equation}
        and, by~\eqref{eq:harmonic_wasserstein_optimality}, that
        \begin{equation}
                \int_{\Omega \times \Gamma} \langle \nabla_x (\nabla_z \psi), \d E \rangle + \int_{\Omega \times \Gamma} \sum\nolimits_{i=1}^s \langle \nabla_z (\nabla_z \psi), e_i \otimes e_i \rangle \d \mu = 0.
                \label{eq:harmonic_laplacian_ce_eq2}
        \end{equation}
        Substituting~\eqref{eq:harmonic_laplacian_ce_eq1} into~\eqref{eq:harmonic_laplacian_ce_eq2} yields
        \begin{equation}
                -\int_{\Omega \times \Gamma} \Delta_x \psi \d \mu + \int_{\Omega \times \Gamma} \left\langle \nabla_z^2 \psi, \sum\nolimits_{i=1}^s e_i \otimes e_i \right\rangle \d \mu = 0,
        \end{equation}
        so that $((\sum_i e_i \otimes e_i) \mu, 0, \mu)$ solves~\eqref{eq:laplacian_ce} by Definition~\ref{def:laplacian_ce}. \qed
\end{proof}
 
Note that the vanishing of the second component of~$((\sum_i e_i \otimes e_i) \mu, 0, \mu)$ is somewhat natural as the optimality condition~\eqref{eq:harmonic_wasserstein_optimality} was derived for \textit{harmonic mappings} and as this component was shown to ``correspond'' to the Laplacian~$\Delta u$ for solutions $\mu$ concentrated on the graph of $u \in C^2(\Omega, \Gamma)$ in Propositions~\ref{prop:laplacian_ce_nc} and~\ref{prop:laplacian_ce_sc}.
Put differently, this consideration shows the consistency of the proposed Laplacian continuity equation~\eqref{eq:laplacian_ce} with previous findings on harmonic mappings in the Wasserstein space.

\section{Discretization \& Numerical Experiments}
\label{sec:results}

In this section, we present a number of numerical experiments on standard imaging problems to demonstrate the functionality of the lifting models proposed above.
All of the following experiments were conducted using the primal-dual hybrid gradient optimization algorithm from \cite{pock2009,chambolle2011} and, more specifically, its GPU-based implementation in the \verb|prost|-library\footnote{\url{https://github.com/tum-vision/prost}} as well as its \verb|sublabel_relax|-extension\footnote{\url{https://github.com/tum-vision/sublabel_relax}} on a machine with an Intel i7-8700 CPU, 64~GB of main memory and a NVIDIA GeForce RTX~2070 GPU featuring 8~GB of video memory.

Backprojection of the results (``unlifting'') was achieved by a simple averaging procedure, i.e., a function~$u$ is obtained from a minimizer~$\mu$ of a lifted functional~$\mathcal{F}$ by computing the expectation $u(x) := \int_{\Gamma} z \d \mu((x, z))$ at every $x \in \Omega$.

\subsection{First-Order Model}
\label{subsec:first_order_experiments}

This section is concerned with a discretized version of our first-order lifting model~\eqref{eq:1st_order_lifting}, in which we employ the following variant of a total variation regularizer from~\cite{lellmann2013b}:
\begin{equation}
\TV(u) := \sup \left\{ \int_{\Omega} \langle u, \ddiv \phi \rangle \d x \ : \ \phi \in C_c^1(\Omega, \R^{d, s}), \ \| \phi \|_{\sigma} \leq 1 \right\},
\label{eq:tv_def}
\end{equation}
where $\| \cdot \|_{\sigma}$ is the spectral norm.
Consequently, one has
\begin{equation}
\TV(u) = \int_{\Omega} \| \nabla u \|_* \d x
\label{eq:tv_differentiable}
\end{equation}
for~${u \in W^{1,1}(\Omega, \R^s)}$ with $\|\cdot\|_*$ as the nuclear norm, i.e., the sum of the arguments' singular values.
Note that~\eqref{eq:tv_differentiable} justifies the application of our first-order model in the following experiments.

Total variation regularization is known to be computationally favorable for convex relaxation models involving constraint sets akin to $\mathcal{K}_f$ from \eqref{eq:bb_dual_set} as it decouples the constraint $f^*(t, \phi^{\xi}(t)) + \phi^{\lambda}(t) \leq 0$ for $f(t, p) = \rho(t) + \| p \|_*$ into separate constraints for data term and regularizer, namely
\begin{equation}
\phi^{\lambda}(t) \leq \rho(t), \quad \| \phi^{\xi}(t) \|_{\sigma} \leq 1.
\end{equation}

Hence, a straightforward discretization of~\eqref{eq:1st_order_lifting} for TV-regularized models is given by
\begin{align}
	\inf_{\mu, E} \ \sup_{\substack{\phi^{\lambda}, \phi^{\xi}\\q}} \ &\sum_{t \in \Omega_h \times \Gamma_h} \mu(t)  \left( \phi^{\lambda}(t) + \ddiv^h_x q(t) \right) + \left\langle E(t), \phi^{\xi}(t) + \nabla^h_z q(t) \right\rangle \label{eq:tv_model_disc}\\
	\text{s.t.} \quad & \phi^{\lambda}(t) \leq \rho(t), \quad \| \phi^{\xi}(t) \|_{\sigma} \leq 1, \quad \mu(t) \geq 0 \qquad \forall t \in \Omega_h \times \Gamma_h, \nonumber\\
	& \sum\nolimits_{z \in \Gamma_h} \mu((x, z)) = 1 \qquad \forall x \in \Omega_h \nonumber
\end{align}
with suitable finite grids $\Omega_h \subset \Omega$ and $\Gamma_h \subset \Gamma$ discretizing the problems' domain and range.
Accordingly, $\ddiv_x^h$ and $\nabla_z^h$ denote finite difference approximations to the respective differential operators and are implemented using Neumann boundary conditions.

Although our focus lies neither on competitive performance nor efficiency but rather on a proof of concept, we want to shortly address the issue of memory requirements:
As all of the primal and dual variables involved in \eqref{eq:tv_model_disc} are defined over~${\Omega_h \times \Gamma_h}$, the models' memory consumption scales with the overall number of discretization points $|\Omega_h| \cdot |\Gamma_h|$.
Unfortunately, this issue already limits the models' applicability for input images of moderate resolution $|\Omega_h|$ when accurate solutions are sought, i.e., when large numbers $|\Gamma_h|$ of range discretization points, so-called \textit{labels}, are employed.

As a remedy, a more sophisticated \textit{sublabel-accurate} discretization strategy for lifting models with TV-regularization was developed in \cite{mollenhoff2016,laude2016}. This strategy allows for a more accurate discretization of the data term for small $|\Gamma_h|$.
Since a sublabel-accurate formulation of~\eqref{eq:1st_order_lifting} is notationally involved and at the same time largely equivalent to the presentation in~\cite{laude2016}, we omit it here and refer to the above-mentioned publications for details.

Instead, we resort to the intuitive explanation given in~\cite[Proposition~4]{mollenhoff2017} that a sublabel-accurate discretization corresponds to an approximation of the dual variables $\phi^{\lambda}$ by finite elements of first order.
A straightforward discretization as in \eqref{eq:tv_model_disc} on the other hand can be linked to an approximation by elements of zeroth order~\cite[Proposition~2]{mollenhoff2017}.
For all details, we refer to the named publications.

\subsubsection{Stereo Matching}

\begin{figure}[t]
	\resizebox{\textwidth}{!}{
		\input{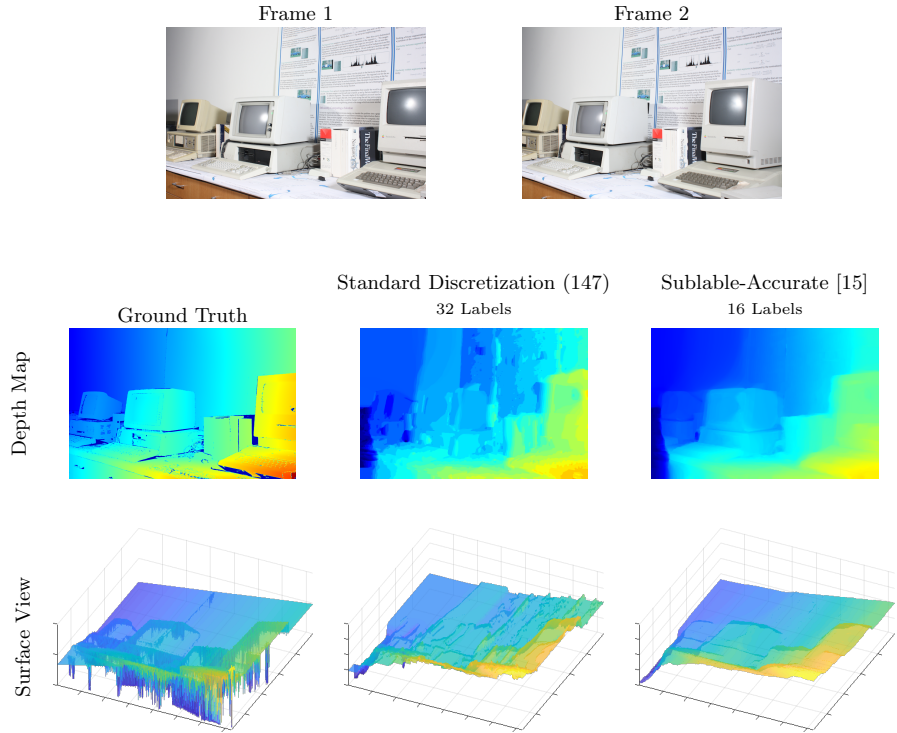}
	}
	\caption{
		Results of the proposed first-order model \eqref{eq:1st_order_lifting} on the scalar-valued problem of finding stereo correspondences between two rectified views of the same scenery.
		As the results in the figures' lower half indicate, the proposed model exhibits a basic functionality when discretized as in \eqref{eq:tv_model_disc} (\textbf{center column}).
		Said approach, however, is prone to label bias as seen from the prominent staircase nature of the displayed depth map.
		A more sophisticated, sublabel-accurate discretization (\textbf{right column}) as in~\cite{mollenhoff2016,laude2016,mollenhoff2017} resolves this problem while at the same time requiring much fewer labels.
		Image data: stereo pair ``Vintage'' from \cite{scharstein2014}.
	}
	\label{fig:stereo_vintage}
\end{figure}

As a nonconvex scalar problem, we test our model on the task of estimating stereo correspondences between two rectified views~$I_1$ and~$I_2$ of the same scenery, i.e., for every point $x \in \Omega$, a horizontal displacement~$u(x)$ is sought that matches $I_1((x_1, x_2 + u(x)))$ with $I_2((x_1, x_2))$.

For our experiments on the test data from \cite{scharstein2014}, we use the upper bounds on the maximum displacement provided by the authors to estimate a suitable search range~$\Gamma$.
Alongside the above-discussed TV-regularization, we use the stereo matching data term implemented in \verb|prost|, i.e.,
\begin{multline}
	\rho((x, u(x))) := \int_{W(x)} h(\partial_{x_1} I_1((y_1, y_2 + u(x))) - \partial_{x_1} I_2(y)) \\
	+ h(\partial_{x_2} I_1((y_1, y_2 + u(x))) - \partial_{x_2} I_2(y)) \d y
\end{multline}
with $h(\alpha) := \min\{ |\alpha|, \nu \}$ for a suitable threshold $\nu > 0$ and with averaging windows~${W(x) \subset \Omega}$.

Experimental results of our model on the ``Vintage'' image pair from \cite{scharstein2014} can be seen in Figure~\ref{fig:stereo_vintage}. Using the standard discretization scheme \eqref{eq:tv_model_disc}, the model shows basic functionality although the results are evidently prone to label bias. More accurate results with fewer labels are achieved using the above mentioned sublabel-accurate discretization scheme. 

\subsubsection{Optical Flow}

\begin{figure}[t]
	\resizebox{\textwidth}{!}{
		\input{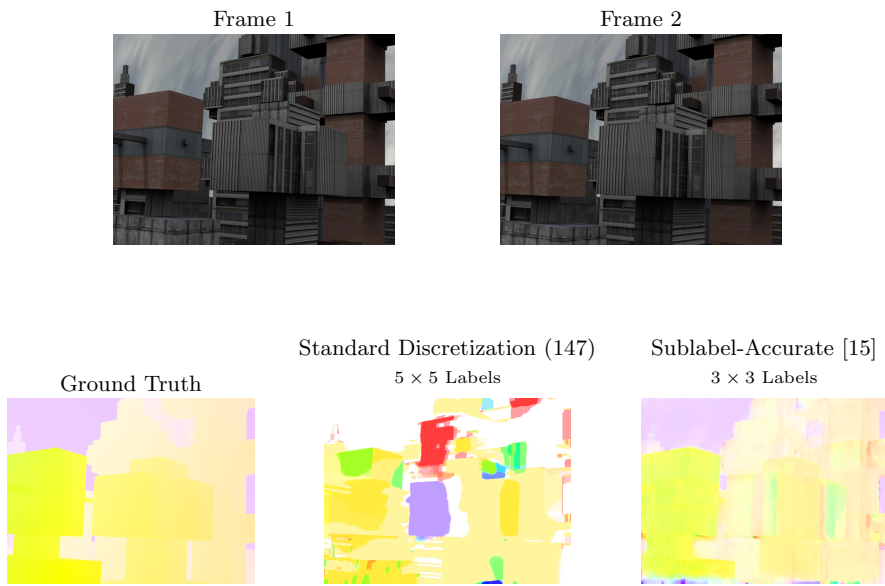}
	}
	\caption{
		Results of the proposed first order model~\eqref{eq:1st_order_lifting} for the vector-valued problem of estimating the optical flow between two consecutive frames of a video sequence.
		As in Figure~\ref{fig:stereo_vintage}, a standard discretization (\textbf{lower half, center}) merely exhibits bare functionality when using reasonable numbers of labels.
		In order to obtain acceptable solutions, a refined sublabel-accurate discretization (\textbf{lower half, right}) needs to be employed.
		Note that an efficient discretization is all the more crucial for vectorial problems as the limiting factor of (graphics) memory requirements scales proportionally with the number of labels used \textit{in each dimension of the range}~$\Gamma$.
		Image data: sequence ``Urban3'' from \cite{baker2010}.
	}
	\label{fig:of_urban3}
\end{figure}

As a test problem for the vector-valued version of \eqref{eq:tv_model_disc}, we consider the task of optical flow estimation.
Given two consecutive frames~$I_1$ and~$I_2$ from a video sequence, one seeks to find a vector field~$u$ explaining the physical motion between the two.

The data term for our experiments is given by a simple $L^1$-distance
\begin{equation}
	\rho((x, u(x))) = \| I_1(x + u(x)) - I_2(x) \|_2.
	\label{eq:of_l1_distance}
\end{equation}
Again, we inferred the size of a suitable search window~$\Gamma$ from the data provided by the authors of the dataset~\cite{baker2010}.

Figure~\ref{fig:of_urban3} shows the results of our model on the sequence ``Urban3''. For vectorial problems  -- such as optical flow estimation -- memory consumption scales with the amount of labels employed per dimension. Therefore, the practical number of labels is even more restricted then for scalar problems and an efficient discretization is all the more crucial. Accordingly in Figure~\ref{fig:of_urban3} the benefit of using sublabel-accurate discretization instead of the straightforward one~\eqref{eq:tv_model_disc} is even more apparent.

\subsection{Laplacian Model}
\label{subsec:laplacian_experiments}

\begin{figure}[t]
	\resizebox{\textwidth}{!}{
		\input{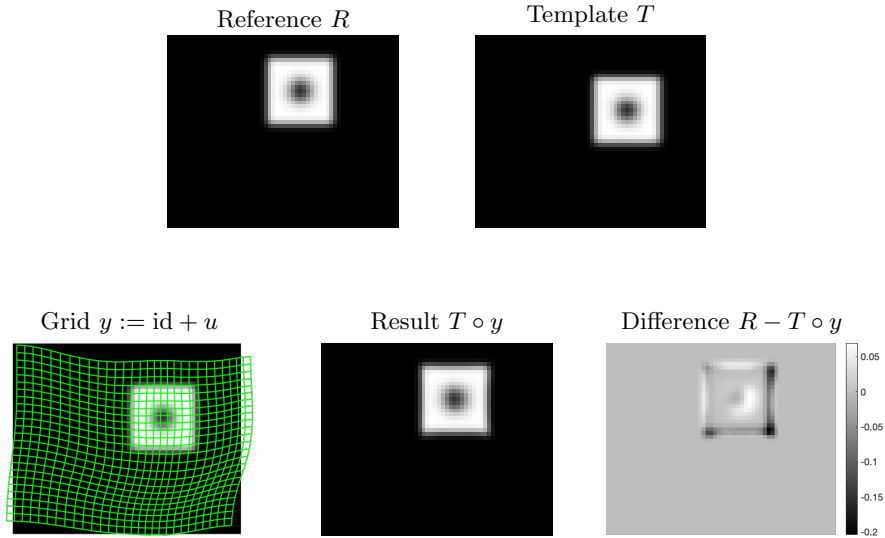}
	}
	\caption{
		Results of the proposed Laplacian lifting model~\eqref{eq:laplacian_lifting} for the problem of registering two synthetic images, i.e., a deformation~$u$ is sought that aligns the template image~$T$ (\textbf{upper half, right}) with the reference image~$R$ (\textbf{upper half, left}).
		The Laplacian model penalizes the curvature of the deformation and is commonly used for (medical) image registration~\cite{fischer2003}.
		In order to obtain a reasonably accurate deformation in spite of a non-sublabel-accurate discretization, we employed a search window of~$[-8, 8] \times [-8, 8]$ for the range $\Gamma$ -- the ground truth displacement is a translation by the vector $(5, 5)$ -- and sampled it using~$31$ equidistant labels in each direction.
		As the registration result (\textbf{lower half, center}) and the difference image (\textbf{lower half, right}) show, the method is able to successfully register~$T$ onto~$R$ through a smooth deformation (\textbf{lower half, left}).
		Note however, that for pratical use on real-world data, a more efficient discretization is imperative due to the otherwise impraticable memory requirements.
	}
	\label{fig:ir_textured_rect}
\end{figure}

This section is concerned with a discretized version of the Laplacian lifting model~\eqref{eq:laplacian_lifting}.
Our main motivation for this model is, as in~\cite{loewenhauser2018,vogt2019a}, to solve image registration problems with curvature regularization~\cite{fischer2003} by $\frac{1}{2}\| \Delta u(x) \|_2^2$.

Although the task of registering a template image~$T$ onto a (similar) reference image~$R$ by the means of a deformation field~$u$ is conceptually equivalent to the above-described optical flow estimation problem, in case of image registration one is often interested in smooth deformations~$u$.
This is especially true in the context of medical image data where piecewise constant deformations -- as favored by a TV-regularizer -- are physically implausible.
On the contrary, a higher order penalization often provides the desired smoothness.

In order to formulate a computationally tractable discretization of~\eqref{eq:laplacian_lifting}, we still need to address the issue of implementing the dual admissibility constraint given in \eqref{eq:bb_dual_set}:
Since the function $\eta = \frac{1}{2} \| \cdot \|_2^2$ penalizing $\Delta u(x)$ is not one-homogenous, its conjugate does not evaluate to an indicator function and the constraint does not decouple with respect to data term and regularizer as it was the case in Section~\ref{subsec:first_order_experiments}.
Rather, one has $\eta^* = \frac{1}{2} \| \cdot \|_2^2$, so that the constraint reads
\begin{equation}
	- \rho(t) + \frac{1}{2} \| \phi^{\xi}(t) \|_2^2 + \phi^{\lambda}(t) \leq 0.
	\label{eq:laplacian_dual_constraint}
\end{equation}

As previously observed in \cite{laude2016,mollenhoff2017}, \eqref{eq:laplacian_dual_constraint} can be rephrased by making use of the fact, that the epigraph of $(g_1 + g_2)^*$ is the Minkowski sum of the respective epigraphs of $g_1^*$ and $g_2^*$ for proper, convex and lower-semicontinuous $g_1$ and $g_2$~\cite[Exercise~1.28]{rockafellar2004}.
Therefore, one obtains the following system of constraints
\begin{align}
	-\rho(t) + \phi^{\lambda_1}(t) &\leq 0,\\
	\frac{1}{2} \| \phi^{\xi}(t) \|_2^2 + \phi^{\lambda_2}(t) & \leq 0,\\
	\phi^{\lambda_1}(t) + \phi^{\lambda_2}(t) &= \phi^{\lambda}(t),
\end{align}
which decouples the epigraphical constraints for data term and regularizer at the expense of an additional equality constraint.

With this in mind, we formulate a discretized version of~\eqref{eq:laplacian_lifting} as follows:
\begin{align}
	\inf_{\substack{\mu, E, H\\r}} \ \sup_{\substack{\phi^{\xi}, \phi^{\lambda}\\\phi^{\lambda_1}, \phi^{\lambda_2}\\q}} \ & \sum_{t \in \Omega_h \times \Gamma_h}
	\begin{Bmatrix*}[l]
	\mu(t) \left( \phi^{\lambda}(t) - \Delta_x^h q(t) \right)\\
	\quad + \left\langle E(t), \phi^{\xi}(t) + \nabla_z^h q(t) \right\rangle\\
	\qquad + \langle H(t), (\nabla_z^2)^h q(t) \rangle\\
	\qquad \quad + r(t) \left( \phi^{\lambda}(t) - \phi^{\lambda_1}(t) - \phi^{\lambda_2}(t) \right)
	\end{Bmatrix*}\\
	\text{s.t.} \quad & \left. \begin{array}{l l}
	\rho(t) \geq \phi^{\lambda_1}(t), & \quad \frac{1}{2} \| \phi^{\xi}(t) \|_2^2 + \phi^{\lambda_2}(t) \leq 0,\\
	\mu(t) \geq 0, & \quad H(t) \in S_+^s
	\end{array} \right\} \quad \forall t \in \Omega_h \times \Gamma_h \nonumber \\
	& \sum\nolimits_{z \in \Gamma_h} \mu((x, z)) = 1 \qquad \forall x \in \Omega_h, \nonumber
\end{align}
where $S_+^s$ is the cone of symmetric and positive semidefinite $s \times s$-matrices and where~$\Omega$ and~$\Gamma$ are discretized as in~\eqref{eq:tv_model_disc}.
As before, the differential operators $\Delta_x^h$, $\nabla_z^h$ and $(\nabla_z^2)^h$ are implemented using finite differences with Neumann boundary conditions.

Since a discussion of a sublabel-accurate discretization of~\eqref{eq:laplacian_lifting} is more involved due to the above-discussed issue of decoupling data term and regularizer constraints, we leave it for future work.
Instead, we present the results of a registration experiment performed on synthetic data in Figure~\ref{fig:ir_textured_rect}.
As a data term, we used the same $L^1$-distance~\eqref{eq:of_l1_distance} as in the optical flow experiments.

Figure~\ref{fig:ir_textured_rect} shows that the proposed model was not only able to successfully register the template image~$T$ onto the reference~$R$, but also, more interestingly, returned a smooth deformation.
\section{Conclusion}
\label{sec:conclusion}

In this work, we have presented a mathematically rigorous framework for functional lifting based on the theory of dynamical optimal transport. Said connection is established through a generalized Benamou-Brenier functional $\mathfrak{B}_f$. As the main theoretical contribution, we have proven an integral representation of $\mathfrak{B}_f$ in Theorem~\ref{thm:bb_integral_representation}. This concept allows to rephrase a large class of nonconvex variational problems as optimization problems over a convex functional.

The proposed framework can be seen as a direct generalization of the classic scalar-valued lifting approach from~\cite{pock2008,pock2010} and, unlike the latter ones, extends naturally to vectorial problems.
Due to its modular structure, our framework allows for various regularizers and types of differential operators as we have demonstrated in Section~\ref{subsec:constraint_lifting} and as such encompasses numerous models investigated separately throughout recent years.
We hope that this work will serve as a blueprint for future developments in this direction.

While we were able to show in~\eqref{eq:1st_order_equiv},~\eqref{eq:2nd_order_equiv} and~\eqref{eq:laplacian_lifting_compatibility} that the respective liftings~$\mathcal{F}$ agree with the original functional~$F$ on graph-concentrated measures, open questions regarding the minimization of~$\mathcal{F}$ over the relaxed domain~$L_w^{\infty}(\Omega, \mathcal{P}(\Gamma))$ include the following:
Can every minimizer of $\mathcal{F}$ be linked to one (or multiple) minimizer(s) of $F$?
If so, can minimizers of $F$ be computed from minimizers of~$\mathcal{F}$ by a suitable projection technique?
Such an extension of the thresholding theorem~\cite[Theorem~3.1]{pock2010} might however be out of reach as indicated by the experiments in~\cite{vogt2019a}.

\vspace{1em}

\noindent\textbf{Acknowledgments.}
The authors acknowledge support through DFG grant LE \mbox{4064/1-1}
``Functional Lifting 2.0: Efficient Convexifications for Imaging and Vision''
and NVIDIA Corporation.

\bibliographystyle{splncs04}
\bibliography{references}

\end{document}